\newtheorem{thm}{Theorem}
\newtheorem{cor}[thm]{Corollary}
\newtheorem{lem}[thm]{Lemma}
\newtheorem{prop}[thm]{Proposition}
\newtheorem{claim}[thm]{Claim}
\newtheorem{fact}[thm]{Fact}
\newtheorem{defn}[thm]{Definition}
\theoremstyle{definition}
\newtheorem{rem}[thm]{Remark}
\newtheorem{notation}{Notation}
\newcommand{\rr}{\mathbb{R}}
\newcommand{\nn}{\mathbb{N}}
\newcommand{\qq}{\mathbb{Q}}
\newcommand{\ee}{\varepsilon}
\newcommand{\inn}{\in \mathbb{N}}
\newcommand{\con}{\smallfrown}
\newcommand{\aaa}{\mathcal{A}}
\newcommand{\bbb}{\mathcal{B}}
\newcommand{\ddd}{\mathcal{D}}
\newcommand{\ccc}{\mathcal{C}}
\newcommand{\fff}{\mathcal{F}}
\newcommand{\llll}{\mathcal{L}}
\newcommand{\www}{\mathcal{W}}
\newcommand{\zzz}{\mathcal{Z}}
\newcommand{\sspan}{\mathrm{span}}
\begin{document}

\title{Uniformly factoring weakly compact operators}

\author{Kevin Beanland}
\address{Department of Mathematics and Applied Mathematics, Virginia Commonwealth
University, Richmond, VA 23284.}
\email{kbeanland@vcu.edu}

\author{Daniel Freeman}
\address{Department of Mathematics and Computer Science\\
Saint Louis University , St Louis, MO 63103  USA}
\email{dfreema7@slu.edu}

\thanks{The first author acknowledges support from the Fulbright Foundation - Greece and the Fulbright
program.}
\thanks{The second author is supported by NSF Award 1139143}

\thanks{2010 \textit{Mathematics Subject Classification}. Primary: 46B28; Secondary: 03E15}
\maketitle


\begin{abstract}
Let $X$ and $Y$ be separable Banach spaces. Suppose $Y$ either has a
shrinking basis or $Y$ is isomorphic to $C(2^\nn)$ and $\aaa$ is a
subset of weakly compact operators from $X$ to $Y$ which is analytic
in the strong operator topology. We prove that there is a reflexive
space with a basis $Z$ such that every $T \in \aaa$ factors through
$Z$.  Likewise, we prove that if $\aaa \subset \llll(X, C(2^\nn))$
is a set of operators whose adjoints have separable range
and is analytic in the strong operator topology then there is a
Banach space $Z$ with separable dual such that every
$T \in \aaa$ factors through $Z$. Finally we prove a uniformly version of this result in
which we allow the domain and range spaces to vary.
\end{abstract}

\section{Introduction}

Recall that if $X$ and $Y$ are Banach spaces then a bounded operator
$T:X\rightarrow Y$ is called {\em weakly compact} if
$\overline{T(B_X)}$ is weakly compact, where $B_X$ is the unit ball
of $X$.  If there exists a reflexive Banach space $Z$ and bounded
operators $T_1:X \to Z$ and $T_2:Z \to Y$ with $T=T_2 \circ T_1$
then $T_1$ and $T_2$ are both weakly compact by Alaoglu's theorem
and hence $T:X\rightarrow Y$ is weakly compact as well.  Thus it is
immediate that any bounded operator which factors through a
reflexive Banach space is weakly compact. In their seminal 1974
paper \cite{DFJP}, Davis, Figiel, Johnson and Pe{\l}czy{\'n}ski
proved that the converse is true as well.  That is, every weakly compact
operator factors through a reflexive Banach space. Likewise, every
bounded operator whose adjoint has separable range factors through
a Banach space with separable dual. Using the DFJP
interpolation technique, in 1988 Zippin proved that every separable
reflexive Banach space embeds into a reflexive Banach space with a
basis and that every Banach space with separable dual embeds into a
Banach space with a shrinking basis \cite{Z}.

For each separable reflexive Banach space $X$ we may choose a
reflexive Banach $Z$ with a basis such that $X$ embeds into $Z$.  It
is natural to consider when the choice of $Z$ can be done uniformly.
That is, given a set of separable reflexive Banach spaces $\aaa$,
when does there exist a reflexive Banach space $Z$ with a basis such
that $X$ embeds into $Z$ for every $X\in \aaa$?  Szlenk proved that
there does not exist a Banach space $Z$ with separable dual such
that every separable reflexive Banach space embeds into $Z$
\cite{Sz}. Bourgain proved further that if $Z$ is a separable Banach
space such that every separable reflexive Banach space embeds into
$Z$ then every separable Banach space embeds into $Z$ \cite{Bo}.
Thus, any uniform embedding theorem must consider strict subsets of
the set of separable reflexive Banach spaces. In his Phd thesis,
Bossard developed a framework for studying sets of Banach spaces
using descriptive set theory \cite{Bos,BosPHD}. In this context, it
was shown in \cite{DodosFerenczi} and \cite{OSZ1} that if $\aaa$ is
an analytic set of separable reflexive Banach spaces then there
exists a separable reflexive Banach space $Z$ such that $X$ embeds
into $Z$ for all $X\in \aaa$, and in \cite{DodosFerenczi} and
\cite{FOSZ} it was shown that if $A$ is an analytic set of Banach
spaces with separable dual then there exists a  Banach space $Z$
with separable dual such that $X$ embeds into $Z$ for all $X\in
\aaa$. In particular, solving an open problem posed by Bourgain
\cite{Bo}, there exists a separable reflexive Banach space $Z$ such
that every separable uniformly convex Banach space embeds into $Z$
\cite{OSuconvex}.
 As the set of all Banach spaces which embed into a fixed
Banach space is analytic in the Bossard framework, these uniform
embedding theorems are optimal.

The goal for this paper is to return to the original operator
factorization problem with the same uniform perspective that was
applied to the embedding problems.  That is, given separable Banach
spaces $X$ and $Y$ and a set of weakly compact operators
$\aaa\subset\llll(X,Y)$, we want to know when does there exist a
reflexive Banach space $Z$ such that $T$ factors through $Z$ for all
$T\in\aaa$.  We are able to answer this question in the following
cases.

\begin{thm}
Let $X$ and $Y$ be separable Banach spaces and let $\aaa$
be a set of weakly compact operators from $X$ to $Y$ which is analytic
in the strong operator topology. Suppose either $Y$ has a
shrinking basis or $Y$ is isomorphic to $C(2^\nn)$. Then there is a reflexive Banach space $Z$
with a basis such that every $T \in \aaa$ factors through $Z$.
\label{maintheorem}
\end{thm}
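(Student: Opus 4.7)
The plan is to combine the DFJP interpolation method with the uniform embedding machinery of \cite{DodosFerenczi,OSZ1,FOSZ} via a parametrized factorization construction. I would regard $\llll(X,Y)$ in the strong operator topology as a standard Borel space in which $\aaa$ sits as an analytic subset, and check that the assignment $T \mapsto W_T := \overline{T(B_X)}^{w}$ is Borel into a suitable Effros--Borel space of weakly compact, convex, symmetric subsets of $Y$. The DFJP construction applied to $W_T$ is canonical: it produces a reflexive space $Z_T$ with operators $i_T : X \to Z_T$ and $j_T : Z_T \to Y$ satisfying $T = j_T \circ i_T$. Since the DFJP norms depend continuously on the defining set, the assignment $T \mapsto (Z_T, i_T, j_T)$ is Borel in a Bossard-style coding of separable Banach spaces together with their bounded operators, so $\{Z_T : T \in \aaa\}$ is an analytic family of separable reflexive spaces.

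Combining the $Z_T$ into a single factoring space is the delicate point. Merely invoking the uniform embedding theorem to get a reflexive $Z$ with a basis into which every $Z_T$ embeds would not suffice, because an embedding $e_T : Z_T \hookrightarrow Z$ need not lift the maps $j_T$ to a single operator $J : Z \to Y$ with $J\circ e_T = j_T$. I would therefore build $Z$ by hand as a Schauder tree FDD space in the spirit of \cite{FOSZ,OSZ1}, whose nodes are indexed by finite sequences of rationals encoding DFJP data, and whose branches correspond, through a continuous surjection from Baire space onto $\aaa$, to individual operators $T\in\aaa$. The $j_T$ assemble into a single operator $J$ on $Z$ because on each branch the canonical projections respect the DFJP coordinates. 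Each $T$ then factors through the subspace of $Z$ generated by its branch, and hence through $Z$; Zippin's theorem then upgrades the FDD to an honest basis.

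The main technical obstacle is to show that this tree space $Z$ is reflexive, which requires the FDD to be simultaneously shrinking and boundedly complete. Here analyticity of $\aaa$ becomes essential: a Bourgain-type boundedness principle, applied to $\ell_1$- and $c_0$-tree-indices along the coded branches, forces these indices to be bounded uniformly across $T\in\aaa$, which in turn yields reflexivity as in \cite{FOSZ,OSZ1}. The subtle part of this step is arranging the DFJP constants and the tree labelling coherently across the whole family so that the index estimates survive the amalgamation; this is where the bulk of the computation will live.

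For the case $Y\cong C(2^\nn)$, the additional obstacle is the nonseparability of $Y^*$, which obstructs a direct application of Zippin's shrinking-basis theorem. I would exploit two features specific to $C(2^\nn)$: first, every weakly compact operator into $C(2^\nn)$ has separable range, so each $j_T$ factors through a separable subspace of $C(2^\nn)$; second, the monotone basis of $C(2^\nn)$ supplies natural finite-rank projections that let one glue the $j_T$ into a single operator $J:Z\to C(2^\nn)$ as the target of the Schauder tree construction, bypassing the need for a shrinking basis on $Y$. This reduces the $C(2^\nn)$ case to a modification of the shrinking-basis construction above, with the coordinate projections of $C(2^\nn)$ playing the role previously played by the shrinking basis of $Y$.
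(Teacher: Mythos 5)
Your overall architecture (DFJP applied fiberwise, Borelness of the assignment, then a tree-type amalgamation into one reflexive space with complemented branches) matches the paper's, which uses the Argyros--Dodos amalgamation theorem as the black box you propose to rebuild by hand. But there are two genuine gaps. First, your choice $W_T=\overline{T(B_X)}^{w}$ is the wrong interpolation set: DFJP applied to it yields a reflexive $Z_T$ with no basis or FDD tied to coordinates of $Y$, so there is nothing for a Schauder-tree amalgamation to grab onto, and no way to make $\{Z_T\}$ into an analytic family of \emph{basic sequences}. The paper instead takes $E_T=\overline{co(T(B_X)\cup\{y_0^T\})}$ with $y_0^T=\sum 2^{-n}y_n^T$ for a basis $(y_n^T)$ of $Y$, and sets $W_T=\overline{\bigcup_k P_k^T(E_T)}$; this forces $y_n^T\in\sspan W_T$, so $z_n^T=T_2^{-1}(y_n^T)$ is a basis of $Z_T$, and the $1$-equivalence class of $(z_n^T)$ is computable in a Borel way from the gauge norms $\|\cdot\|_{W_T^m}$. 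That is what makes the amalgamation (and the complemented embedding needed to push the factorization from $Z_T$ up to $Z$) possible. Relatedly, your concern about assembling the $j_T$ into a single operator $J:Z\to Y$ is a red herring: factoring each $T$ through $Z$ only requires, for each $T$ separately, maps $X\to Z\to Y$, and these exist as soon as the copy of $Z_T$ in $Z$ is complemented, which is exactly what the amalgamation theorem delivers.

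Second, your treatment of $Y\cong C(2^\nn)$ misses the actual obstruction. The point is not separability of the range of $j_T$ (trivial, since $X$ is separable), and the ``natural finite-rank projections'' of the standard monotone basis of $C(2^\nn)$ do not help: since that basis is not shrinking, the set $E_T\cup\bigcup_k P_k(E_T)$ built from it need not have weakly compact closure, so $W_T$ fails to be weakly compact and $Z_T$ fails to be reflexive. The paper's fix is to choose the basis of $C(2^\nn)$ \emph{adapted to $T$}: one defines a parametrized Borel fragmentation $\ddd(T,\sigma,\tau)=\sup_n|d_n(E_T)(\sigma)-d_n(E_T)(\tau)|$ (whose fragmentation property uses precisely that $T^*$ has separable range, resp.\ that $T$ is weakly compact), applies the parametrized Ghoussoub--Maurey--Schachermayer dessert selection to produce a basis $(e_n^T)$ of $C(2^\nn)$ depending Borel-measurably on $T$, and then the resulting $(z_n^T)$ is shrinking and $W_T$ is weakly compact. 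Without this step (or an equivalent), your construction does not produce reflexive $Z_T$'s in the $C(2^\nn)$ case, and your Bourgain-type index bounds for the amalgam have no individual reflexivity to start from.
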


\begin{thm}
Let $X$ be a separable Banach space and let $\aaa \subset \llll(X,
C(2^\nn))$ be a set of bounded operators whose adjoints have
separable range which is analytic in the strong operator topology.
Then there is a Banach space $Z$ with a shrinking basis such that
every $T \in \aaa$ factors through $Z$. \label{maintheoremSD}
\end{thm}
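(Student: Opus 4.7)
The plan is to mirror the argument used for the $Y=C(2^\nn)$ case of Theorem~\ref{maintheorem}, systematically substituting ``Banach space with separable dual'' for ``reflexive space'' and ``shrinking basis'' for ``reflexive basis.'' The structural analogy is strong: the DFJP interpolation of \cite{DFJP} produces, from any $T\in\aaa$, an intermediate Banach space with separable dual through which $T$ factors (since $T^*$ has separable range), exactly as it produces a reflexive intermediate space in the weakly compact setting. Zippin's theorem gives the matching basis version, and \cite{FOSZ} provides the universal embedding theorem for analytic families of Banach spaces with separable dual that replaces the reflexive universal embedding theorem used to prove Theorem~\ref{maintheorem}.

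First I would carry out a parametrized DFJP construction: for each $T\in\aaa$, let $W_T$ denote the standard interpolation space, so that $T = S_T\circ R_T$ with $R_T\colon X\to W_T$ and $S_T\colon W_T\to C(2^\nn)$. Because $R_T$ and $S_T$ are given by explicit norm-series formulas in $T$, the assignment $T\mapsto W_T$ can be arranged to be Borel when $\aaa$ carries SOT, and the hypothesis that $T^*$ has separable range guarantees each $W_T$ has separable dual. Thus the family $\{W_T:T\in\aaa\}$ is analytic in an appropriate Borel coding of separable Banach spaces, and applying the universal embedding theorem for analytic classes of spaces with separable dual together with Zippin's theorem yields a single Banach space $Z_0$ with shrinking basis into which every $W_T$ embeds.

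The main obstacle is the step from embeddings $j_T\colon W_T\hookrightarrow Z_0$ to actual factorizations of $T$ through a universal space: one needs a bounded extension of $S_T\circ j_T^{-1}$ from $j_T(W_T)$ to the ambient space, and this extension is not automatic. The standard remedy, which I would adapt here from the proof of Theorem~\ref{maintheorem}, is to replace $Z_0$ by a Schauder tree space $Z$: an $\ell_p$-sum (for suitable $p<\infty$) of branches indexed by a well-chosen tree on $\nn$, each branch playing the role of one of the $W_T$. Branchwise projections then permit uniform extension of the $S_T$ to operators $\widetilde S_T\colon Z\to C(2^\nn)$, while the choice of $p$ and the tree ensure that $Z$ still admits a shrinking basis. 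The technical heart of the argument is arranging this Schauder tree construction so that the resulting $Z$ has separable dual, by replacing the reflexive interpolation spaces that appear in the corresponding step of Theorem~\ref{maintheorem} with interpolation spaces having separable dual while preserving the uniform bounds on $\|\widetilde S_T\|$.
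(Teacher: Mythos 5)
Your outline agrees with the paper's strategy at the top level (parametrized DFJP, then a universal space for an analytic family), and you correctly isolate the central obstacle: mere embeddings $j_T\colon W_T\hookrightarrow Z_0$ do not yield factorizations of $T$, because $S_T\circ j_T^{-1}$ need not extend boundedly to $Z_0$; what is needed is that each factorization space sit \emph{complementably} in the universal space. However, your proposed remedy leaves a genuine gap. Any tree-amalgamation with branchwise projections operates on an analytic set of \emph{basic sequences}, not of abstract spaces: to make a given $W_T$ a "branch" you must first equip it with a Schauder basis, and you must do so Borel-measurably in $T$. An abstract DFJP interpolation space built from $T(B_X)$ alone has no basis a priori, and a literal $\ell_p$-sum over the family $\{W_T : T\in\aaa\}$ is not even separable when $\aaa$ is uncountable. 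This missing step is precisely where the paper's technical work lies: after using Lusin separation against the coanalytic class of Asplund operators (Proposition \ref{opscoanalytic}(b)) to replace $\aaa$ by a Borel set $\bbb$, one defines a parametrized Borel fragmentation $\ddd(T,\sigma,\tau)=\sup_n |d_n(E_T)(\sigma)-d_n(E_T)(\tau)|$ on $2^\nn$ --- the fragmentation property holds exactly because $T^*$ has separable range --- and uses the parametrized Ghoussoub--Maurey--Schachermayer dessert selection (Theorem \ref{pds}) to choose a monotone basis $(e_n^T)$ of $C(2^\nn)$ with $T\mapsto (e_n^T)$ Borel.

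The DFJP interpolation is then run not on $\overline{T(B_X)}$ but on $W_T=\overline{\bigcup_k P_k^T(E_T)}$ with $E_T=\overline{co(T(B_X)\cup\{y_0^T\})}$, where the projections $P_k^T$ are taken with respect to the selected basis; this is what forces $Z_T$ to have a shrinking basis $(z_n^T)$, with the 1-equivalence class of $(z_n^T)$ coded Borel-measurably in $C(2^\nn)^\nn$ (Lemma \ref{lastlemma}). Only at that point can one invoke the Argyros--Dodos amalgamation theorem for analytic sets of shrinking basic sequences (Theorem \ref{ADseq2}), which produces a single space with a shrinking basis containing each $Z_T$ as a \emph{complemented} subspace; the factorization then passes through as in Proposition \ref{general}. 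So the tree space you gesture at is the right kind of object, but it cannot be built from the data you have assembled: without the fragmentation/dessert-selection step producing a Borel choice of shrinking basis for each $Z_T$, neither the amalgamation nor the branch projections are available, and the argument does not close.
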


The idea of factoring all operators in a set through a single Banach
space has been considered previously for compact operators and
compact sets of weakly compact operators \cite{ALRR,GoGu,MO}.  In
particular, Johnson constructed a reflexive Banach space $Z_K$ such
that if $X$ and $Y$ are Banach spaces and either $X^*$ or $Y$ has
the approximation property then every compact operator
$T:X\rightarrow Y$ factors through $Z_K$ \cite{JohnsonCompact}.
Later, Figiel showed that if $X$ and $Y$ are Banach spaces and
$T:X\rightarrow Y$ is a compact operator, then $T$ factors through a
subspace of $Z_K$ \cite{Fi}. It is particularly interesting that the
space $Z_K$ is independent of the Banach spaces $X$ and $Y$. In
\cite{BrookerAsplund},  Brooker proves that for every countable
ordinal $\alpha$, if $X$ and $Y$ are separable Banach spaces and
$T:X\rightarrow Y$ is a bounded operator with Szlenk index at most
$\omega^\alpha$   then $T$ factors through a  Banach space with
separable dual and Szlenk index at most $\omega^{\alpha+1}$.  This
result, combined with the embedding result in \cite{FOSZ}  gives
that for every countable ordinal $\alpha$, there exists a Banach
space $Z$ with a shrinking basis such that every bounded operator
with Szlenk index at most $\omega^\alpha$ factors through a subspace
of $Z$.  In section 4 we present generalizations of Theorems
\ref{maintheorem} and \ref{maintheoremSD} where the Banach space $X$
is allowed to vary.

The authors thank Pandelis Dodos for his suggestions and helpful
ideas about the paper. Much of this work was conducted at the National Technical University
of Athens in Greece during the spring of 2011. The first author would like
to thank Spiros Argyros for his hospitality and for providing an excellent research environment during this period.

\section{Preliminaries}

A topological space $P$ is called a {\em Polish} space if it is
separable and completely metrizable. A set $X$, together with a
$\sigma$-algebra $\Sigma$, is called a {\em standard Borel space} if
the measurable space $(X, \Sigma)$ is Borel isomorphic to a Polish
space. A subset $A\subset X$ is said to be {\em analytic} if there
exists a Polish space $P$ and a Borel map $f: P \to X$ with
$f(P)=A$. A subset of $X$ is said to be {\em coanalytic} if its
complement is analytic.

Given some Polish space $X$, we will be studying sets of closed
subspaces of $X$.  Thus, from a descriptive set theory point of
view, it is natural to assign a $\sigma$-algebra to the set of
closed subsets of $X$ which then forms a standard Borel space. Let
$F(X)$ denote the set of closed subspaces of $X$.  The {\em
Effros-Borel $\sigma$-algebra}, $E(X)$, is defined as the collection
of sets with the following generator
$$\big\{ \{F \in F(X) : F \cap U \not= \emptyset\}:U\subset X\textrm{ is open}\big\}.$$
The measurable space $(F(X),E(X))$ is a standard Borel space.  If
$X$ is a Banach space, then $Subs(X)$ denotes the standard Borel
space consisting of the closed subspaces of $X$ endowed with the
relative Effros-Borel $\sigma$-algebra.  As every separable Banach
space is isometric to a subspace of $C(2^\nn)$, the standard Borel
space $SB=Subs(C(2^\nn))$ is of particular importance when studying
sets of separable Banach spaces. 

If $X$ and $Y$ are separable Banach spaces, then the space
$\llll(X,Y)$  of all bounded linear operators from $X$ to $Y$
carries a natural structure as a standard Borel space whose Borel
sets coincide with the Borel sets generated by the strong operator
topology (i.e. the topology of pointwise convergence on nets). In
this paper when we refer to a Borel subset of $\llll(X,Y)$ it is
understood that this is with respect to the Borel $\sigma$-algebra
generated by the strong operator topology. There are several papers
in which $\llll(X,Y)$ is considered with this structure
\cite{BeanlandIsrael,BeanlandDodos,BF-ordinal}.

Both the set of all separable reflexive Banach spaces and the set of
all Banach spaces with separable dual are coanalytic subsets of SB.
This fact is essential in the proofs of the universal embedding
theorems for analytic sets of separable reflexive Banach spaces and
analytic sets of Banach spaces with separable dual
\cite{DodosFerenczi},\cite{FOSZ},\cite{OSZ1}.  Thus, we will
naturally need the following theorem to prove our universal
factorization results for analytic sets of weakly compact operators
and analytic sets of operators whose adjoints have separable range.

\begin{prop}
For $X,Y \in SB$ the following are coanalytic subsets of
$\llll(X,Y)$.
\begin{itemize}
\item[(a)] The set of weakly compact operators.
\item[(b)] The set of operators whose adjoints have separable range (these operators are called Asplund operators).
\end{itemize}
\label{opscoanalytic}
\end{prop}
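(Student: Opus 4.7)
The plan is to reduce both parts of the proposition to coanalyticity statements for classes of separable Banach spaces in $SB$. As noted in the preceding paragraph, the class $\mathrm{REFL}\subset SB$ of separable reflexive spaces and the class $\mathrm{SD}\subset SB$ of Banach spaces with separable dual are both coanalytic. I will construct a Borel map $\phi\colon \llll(X,Y)\to SB$ with the property that $\phi(T)\in\mathrm{REFL}$ if and only if $T$ is weakly compact, and $\phi(T)\in\mathrm{SD}$ if and only if $T$ is Asplund. Given such a $\phi$, the sets in (a) and (b) are the preimages $\phi^{-1}(\mathrm{REFL})$ and $\phi^{-1}(\mathrm{SD})$ of coanalytic sets under a Borel map, and are therefore themselves coanalytic.

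The natural candidate for $\phi$ comes from the Davis--Figiel--Johnson--Pe{\l}czy{\'n}ski interpolation \cite{DFJP}. Normalizing so that $\|T\|\leq 1$, set $W_T=\overline{T(B_X)}\subset Y$, $U_n=2^n W_T+2^{-n}B_Y$, and let $\|\cdot\|_n$ be the Minkowski functional of the symmetric convex body $U_n$. Then $|y|^2=\sum_{n\geq 1}\|y\|_n^2$ defines a norm on $Z_T=\{y\in Y:|y|<\infty\}$; with this norm $Z_T$ is a Banach space, the canonical inclusion $Z_T\hookrightarrow Y$ is bounded, and $T$ factors as $X\to Z_T\hookrightarrow Y$. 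The classical DFJP-type theorems supply both required dichotomies: $Z_T$ is reflexive exactly when $W_T$ is weakly relatively compact (equivalently, $T$ is weakly compact), and $Z_T$ has separable dual exactly when $T^*(B_{Y^*})$ is norm-separable (equivalently, $T$ is Asplund).

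To place $Z_T$ inside $SB$, fix a countable dense sequence $(d_k)$ in $B_X$; then $\{Td_k:k\in\nn\}$ is $|\cdot|$-dense in the image of $X$ in $Z_T$, and $Z_T$ is the $|\cdot|$-completion of its rational linear span. Bossard's framework converts the distance matrix $(|Td_k-Td_l|)_{k,l\in\nn}$ into a closed subspace of $C(2^\nn)$ in a Borel way, via a Kuratowski--Ryll-Nardzewski selection of an isometric embedding. The main obstacle of the plan is therefore the verification that this distance matrix is a Borel function of $T$ in the Borel structure on $\llll(X,Y)$ generated by the strong operator topology. Unwinding the definitions, $\|y\|_n=\inf\{t>0:\mathrm{dist}(y,2^n t\,W_T)\leq 2^{-n}t\}$, and $\mathrm{dist}(y,2^n t\,W_T)=\inf_{k\in\nn}\|y-2^n t\,Td_k\|$ since $\{Td_k\}$ is norm-dense in $T(B_X)$. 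For $y=Td_i-Td_j$, each term $\|Td_i-Td_j-2^n t\,Td_k\|$ is a Borel function of $T$ by the very definition of the SOT-generated Borel structure, so $\|\cdot\|_n$ (a countable infimum over rational $t$) and then $|\cdot|$ (a countable sum of squares) are Borel functions of $T$ on the countable set $\{Td_k\}$. Once $\phi$ has been shown to be Borel, the coanalyticity asserted in (a) and (b) follows immediately.
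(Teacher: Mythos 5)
Your strategy is genuinely different from the paper's, and most of it is sound, but the step you dismiss in one sentence is in fact the hard part, while the step you call ``the main obstacle'' is the routine one. For comparison: the paper proves (a) with no interpolation at all, quoting the characterization that $T$ is weakly compact iff no bounded sequence $(x_n)\in B_X^{\nn}$ has image dominating the summing basis of $c_0$ --- a universal quantifier over a Polish space followed by a Borel condition, hence coanalytic. For (b) it reduces not to $\mathrm{SD}\subset SB$ but to Bossard's theorem that the norm-separable compacta form a coanalytic subset of $K(B_{\ell_\infty})$, via the map $T\mapsto T^*(B_{Y^*})$, whose Borelness comes from the Kechris slice theorem. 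Both reductions avoid ever having to realize a newly built Banach space as an element of $SB$. Your verification that $(T,i,j)\mapsto |Td_i-Td_j|$ is Borel is fine (it is essentially Lemma \ref{manyitems}(c),(d)), and the two DFJP dichotomies are true, though the implication ``$T^*$ has separable range $\Rightarrow Z_T$ has separable dual'' is a nontrivial theorem in its own right that must be cited, not part of the weak-compactness statement of \cite{DFJP}. (Also, $T(X)$ need not be dense in $Z_T$, so $\phi(T)$ should be the closed span of $T(X)$ inside $Z_T$, as your construction implicitly takes; the dichotomies survive since $T$ still factors through that subspace.)

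The genuine gap is the sentence ``Bossard's framework converts the distance matrix into a closed subspace of $C(2^\nn)$ in a Borel way, via a Kuratowski--Ryll-Nardzewski selection.'' The multifunction assigning to a norm-code the set of its isometric realizations in $C(2^\nn)^{\nn}$ has closed nonempty values, but Kuratowski--Ryll-Nardzewski also requires Borel measurability of $\{\nu : F(\nu)\cap U\neq\emptyset\}$ for open $U$, and that set is a priori only analytic, so the selection theorem does not apply off the shelf. This difficulty is precisely why Section 3 of the paper works so hard (bases of $Z_T$, $1$-equivalence, the Argyros--Dodos amalgamation, the parametrized dessert selection) and even then only produces an \emph{analytic} family of factorization spaces, under extra hypotheses on $Y$. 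Two repairs are available: either cite the coding-equivalence results in Bossard's framework, which do furnish such a Borel map; or, more cheaply, observe that you do not need a Borel map at all --- it suffices that the set $G=\{(T,Z): Z \mbox{ is isometric to } \phi(T)\}$ be analytic with nonempty sections, because reflexivity and separability of the dual are isometry-invariant, so the set of weakly compact $T$ equals $\{T: \forall Z\,((T,Z)\in G\Rightarrow Z\in\mathrm{REFL})\}$, which is coanalytic. With one of these repairs your argument is correct, and what it buys over the paper's is a single unified reduction for (a) and (b); the cost is importing the DFJP/Asplund factorization theorem and the coding machinery where the paper's argument for (a) is essentially elementary.
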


Before proving Proposition \ref{opscoanalytic}, we will need to
introduce some more results from descriptive set theory. Given a
Polish space $E$, let $K(E)$ be the space of all compact subset of
$E$. The space $K(E)$ is Polish when equipped with the {\em
Vietoris} topology, which is the topology on $K(E)$ generated by the
sets
$$\big\{ \{K \in K(X) : K \cap U \not= \emptyset\}:U\subset X\textrm{ is open}\big\}\text{ and }
\big\{ \{K \in K(X) : K \subseteq U\}:U\subset X\textrm{ is
open}\big\}.$$
 When studying sequences in the unit
ball of a separable Banach space $X$, we note that the the space
$B_X^\nn$ is a Polish space when endowed with the product topology.
We will always consider $B_{\ell_\infty}$, the ball of
$\ell_\infty$, to be equipped with the weak$^*$ topology.
 In \cite{BosPHD} Bossard proves the following Theorem.
\begin{thm}[\cite{BosPHD}]
The set
$$\Sigma= \{K \in K(B_{\ell_\infty}) : K \mbox{ is norm-separable}\}$$
is coanalytic in the Vietoris topology of $K(B_{\ell_\infty})$.
\label{Sigma}
\end{thm}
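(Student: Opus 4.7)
The plan is to show that $\Sigma^c = \{K \in K(B_{\ell_\infty}) : K \text{ is not norm-separable}\}$ is analytic. A standard maximal-separated-set argument reduces this to showing that for each $n \in \nn$ the set $B_n := \{K : K \text{ has an uncountable } 1/n\text{-norm-separated subset}\}$ is analytic, which I will do by exhibiting $B_n$ as the projection of a Borel subset of a product of Polish spaces.

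The naive formulation --- that $K \in B_n$ iff there is a weak$^*$-continuous injection $\vphi : 2^\nn \to K$ with norm-separated image --- resists direct Borel coding because norm-separation need not be preserved by weak$^*$-limits (the norm on $B_{\ell_\infty}$ is only weak$^*$-lower semicontinuous). I therefore code a Cantor scheme in $K$ together with witnessing linear functionals in $B_{\ell_1}$: $K \in B_n$ iff there exist constants $\delta > 2\eta > 0$ and families $(x_s)_{s \in 2^{<\nn}} \in B_{\ell_\infty}^{2^{<\nn}}$ and $(y_s)_{s \in 2^{<\nn}} \in B_{\ell_1}^{2^{<\nn}}$ satisfying: (a) $x_s \in K$ for every $s$; (b) $|\langle x_{s \con 0} - x_{s \con 1}, y_s\rangle| \ge \delta$; (c) for every $t \supsetneq s$ with $i = t(|s|)$, $|\langle x_t - x_{s \con i}, y_s\rangle| \le \eta$; and (d) $(x_{\sigma|k})_k$ is weak$^*$-Cauchy for every $\sigma \in 2^\nn$, a condition enforced via a fixed compatible weak$^*$-metric on $B_{\ell_\infty}$. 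Each of (a)--(d) is Borel in the parameters, so projecting onto $K(B_{\ell_\infty})$ shows $B_n$ is analytic.

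For the easy direction, given such a scheme, for every $\sigma \in 2^\nn$ the weak$^*$-limit $\vphi(\sigma)$ of $(x_{\sigma|k})_k$ exists and lies in $K$; for distinct $\sigma, \tau$ with longest common prefix $s$, conditions (b), (c) together with $\|y_s\|_{\ell_1} \le 1$ yield
$\|\vphi(\sigma) - \vphi(\tau)\|_{\ell_\infty} \ge |\langle \vphi(\sigma) - \vphi(\tau), y_s\rangle| \ge \delta - 2\eta > 0$,
so $\vphi(2^\nn)$ is an uncountable norm-separated subset of $K$.

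The hard direction --- which is the main obstacle --- constructs the scheme from the hypothesis. Given an uncountable $1/n$-norm-separated $A \subset K$, iterate the Szlenk-type derivative $F \mapsto F \setminus \bigcup \{V \text{ weak}^*\text{-open} : \operatorname{diam}_{\|\cdot\|}(V \cap F) < 1/(2n)\}$ transfinitely from $F_0 = K$. If it ever reached $\emptyset$, second countability of the weak$^*$ topology would express $K$ as a countable union of relatively weak$^*$-open pieces of norm-diameter $< 1/(2n)$, each containing at most one point of $A$, contradicting $|A| > \aleph_0$. So the iteration stabilizes at a nonempty weak$^*$-closed $F \subset K$ every relatively weak$^*$-open subset of which has norm-diameter $\ge 1/(2n)$. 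Build the scheme inside $F$ inductively: at stage $s$, in the current cell select $x_{s\con 0}, x_{s\con 1}$ with $\|x_{s\con 0} - x_{s\con 1}\|_{\ell_\infty} \ge 1/(3n)$; extract $y_s = \pm e_m \in B_{\ell_1}$ for a coordinate $m$ realizing most of this $\ell_\infty$-gap, which gives (b) with some fixed $\delta > 0$; and let the two daughter cells be the current cell intersected with the weak$^*$-halfspaces $\{|\langle \cdot - x_{s\con i}, y_s\rangle| < \eta\}$ for a small fixed $\eta < \delta/2$, further shrunk in the weak$^*$-metric to force (d). The daughter cells remain relatively weak$^*$-open in $F$, hence still have norm-diameter $\ge 1/(2n)$, so the induction continues. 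The $y_s$-witnesses are the key device circumventing the weak$^*$-LSC obstruction: separation at each split is encoded by a single element of $B_{\ell_1}$, and condition (c) is itself a weak$^*$ condition, so the separation propagates to the weak$^*$-limits.
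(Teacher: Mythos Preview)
The paper does not supply a proof of this theorem; it is quoted from Bossard's thesis and used as a black box, so there is no proof in the paper to compare against directly. The standard argument (Bossard; see also \cite{DodosBook}) shows coanalyticity by exhibiting a $\Pi^1_1$-rank, namely the norm-fragmentation index obtained by transfinitely iterating precisely the Szlenk-type derivative you invoke in your hard direction. Your route is the dual one: you show $\Sigma^c$ is $\Sigma^1_1$ by coding a Cantor scheme inside $K$ together with $\ell_1$-witnesses for the norm gaps. Both arguments rest on the same derivative; yours trades the rank machinery for an explicit projective presentation, which is a reasonable and arguably more elementary alternative.

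One point needs tightening. Condition (d) as you phrase it --- ``$(x_{\sigma|k})_k$ is weak$^*$-Cauchy for every $\sigma\in 2^\nn$'' --- carries a universal quantifier over the uncountable space $2^\nn$ and is a priori only $\Pi^1_1$ in the parameter $(x_s)_{s\in 2^{<\nn}}$, not Borel; branch-wise Cauchyness is not in general equivalent to any uniform tree condition, and projecting a $\Pi^1_1$ set need not give an analytic set. The fix is already implicit in your construction: replace (d) by the uniform condition $d(x_t,x_s)\le 2^{-|s|}$ whenever $s\subseteq t$ (for your fixed weak$^*$-metric $d$). This is a countable conjunction of closed conditions on $(x_s)_s$, hence Borel; it implies (d); and it is exactly what your cell-shrinking in the hard direction produces. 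With that adjustment the argument goes through. As a cosmetic remark, since your scheme characterization never refers to $n$, you may dispense with the intermediate sets $B_n$ and assert directly that $\Sigma^c$ is the projection, over rational $\delta>2\eta>0$, of the Borel set of admissible schemes.
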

We will show that for all $X,Y\in SB$ the set of operators whose
adjoints have separable range is coanalytic in $\llll(X,Y)$ by
showing that the set is Borel reducible to $\Sigma\subset
K(B_{\ell_\infty})$.  To do this, we will define a map $\Phi  :
\llll(X,Y) \to K(B_{\ell_\infty})$, and use the following theorem to
show that it is Borel.
\begin{thm}\cite[Theorem 28.8]{Ke}
Let $X$ and $Y$ be Polish spaces and $A \subset Y \times X$ be such
that for each $y \in Y$ the set $A_y =\{x \in X: (y,x) \in A\}$ is
compact. Consider the map $\Phi_A:Y \to K(X)$ defined by $\Phi_A(y)=
A_y$. Then $A$ is Borel if and only if $\Phi_A$ is a Borel map.
\label{slice}
\end{thm}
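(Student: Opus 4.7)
The plan is to prove the two implications separately; the forward direction ($\Phi_A$ Borel implies $A$ Borel) is essentially soft topology, while the reverse direction is substantive and reduces to a classical projection theorem.

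For the easy direction, I would first observe that the membership relation
\[
M = \{(x,K) \in X \times K(X) : x \in K\}
\]
is closed in $X \times K(X)$, with $K(X)$ carrying the Vietoris topology. Indeed, if $x \notin K$, then by compactness of $K$ and metrizability of $X$ there exist disjoint open sets $U \ni x$ and $V \supseteq K$, so that $U \times \{K' \in K(X) : K' \subseteq V\}$ is an open neighborhood of $(x,K)$ disjoint from $M$. Since $\Phi_A$ is assumed Borel, the map $(y,x) \mapsto (x, \Phi_A(y))$ from $Y \times X$ to $X \times K(X)$ is Borel coordinatewise and hence Borel, and $A$ is precisely the preimage of $M$ under this map.

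For the substantive direction, assume $A$ is Borel; I would verify that $\Phi_A^{-1}$ of each generator of the Vietoris topology is Borel in $Y$. Recall the Vietoris topology is generated by the two types of sets $\{K : K \cap U \neq \emptyset\}$ and $\{K : K \subseteq U\}$ for open $U \subseteq X$. A direct unpacking gives
\[
\Phi_A^{-1}\bigl(\{K : K \cap U \neq \emptyset\}\bigr) = \mathrm{proj}_Y\bigl(A \cap (Y \times U)\bigr),
\]
\[
\Phi_A^{-1}\bigl(\{K : K \subseteq U\}\bigr) = Y \setminus \mathrm{proj}_Y\bigl(A \cap (Y \times (X \setminus U))\bigr).
\]
In the first formula the $y$-fibers of $A \cap (Y \times U)$ are relatively open in the compact set $A_y$, hence $K_\sigma$; in the second, the fibers are relatively closed in $A_y$, hence compact.

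The main obstacle and crucial input is the classical Arsenin-Kunugui projection theorem: the projection to $Y$ of a Borel subset of $Y \times X$ all of whose vertical sections are $K_\sigma$ is Borel in $Y$. Applying this to the two sets above shows that both $\Phi_A$-preimages are Borel, so $\Phi_A$ is Borel. Beyond quoting Arsenin-Kunugui, all the remaining work is routine; in fact, because the fibers here are contained in compact sets, one could also run the argument via a compact-valued measurable selection theorem if one preferred to avoid the full $K_\sigma$-fiber version.
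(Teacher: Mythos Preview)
The paper does not give its own proof of this statement; it is quoted as \cite[Theorem 28.8]{Ke} and used as a black box in the proof of Proposition~\ref{opscoanalytic}. So there is nothing in the paper to compare your argument against.

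That said, your proof is correct. The easy direction is exactly right: the membership relation $M\subset X\times K(X)$ is closed, and $A$ is the Borel preimage of $M$ under $(y,x)\mapsto (x,\Phi_A(y))$. For the substantive direction your two displayed identities are correct, and the section analysis (open-in-compact gives $K_\sigma$, closed-in-compact gives compact) is accurate, so Arsenin--Kunugui applies and yields Borel projections. There is no circularity: Arsenin--Kunugui is proved by entirely different (uniformization) methods and does not rely on the statement you are proving. Your closing remark is also apt: since every fiber here sits inside a compact set, the easier Novikov--Kunugui theorem on projections of Borel sets with \emph{compact} sections already suffices for the second preimage, and after writing an open $U$ as an increasing union of closed sets and using compactness of $A_y$, it suffices for the first as well. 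Invoking the full $K_\sigma$ version is convenient but heavier than necessary.
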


 By the Kuratowski and
Ryll-Nardzewski selection theorem \cite{KRN} we can find a sequence
of Borel maps $(s_n)_{n \in \nn}$ such that $s_n:F(C(2^\nn)) \to
C(2^\nn)$ for each $n \in \nn$ and $(s_n(E))_{n=1}^\infty$ is dense
in $E$. In addition, for all $n\in\nn$ let  $d_n: SB \to C(2^\nn)$
be a Borel map such that $(d_n(X))_{n \in \nn}$ is dense in $B_X$
for all $X\in SB$ and for $p,q \in \qq$ and $m,k \in \nn$ if
$qd_m(X) + p d_k(X) \in B_X$ then there is an $\ell \in \nn$ with
$d_\ell(X) =qd_m(X) + p d_k(X)$. We will also assume that $d_n(X)
\not=0$ for all $X \in SB$ and $n \in \nn$.  Working with the
sequences $(s_n)$ and $(d_n)$ will be easier for us than dealing
with the Efros-Borel $\sigma$-algebra or Vietoris topology directly.


\begin{proof}[Proof of Proposition \ref{opscoanalytic}]
Item (a) is proved in \cite[Proposition 9]{BF-ordinal} and follows
from the fact that weakly compact operators are exactly those
operators that take bounded sequences in $X$ to sequences that do
not dominate the summing basis of $c_0$.

The proof of (b) requires a bit more effort, but it follows the same outline as the proof that the collection
of all spaces with separable dual (SD) is coanalytic \cite{Bos}.  Let $\aaa(X,Y)$ denote the collection of
operators in $\llll(X,Y)$ whose adjoints have separable range. For $T \in \aaa(X,Y)$ and $y^* \in B_{Y^*}$ Let
$$f_{T^*y^*} = \bigg( \frac{T^*y^*(d_i(X))}{\|d_i(X)\|} \bigg)_{i=1}^\infty \in B_{\ell_\infty}.$$
For $T \in \llll(X,Y)$ let $K_T = \{ f_{T^*y^*} : y^* \in B_{Y^*}\}$. Notice that $K_T$ can be
identified with $T^*(B_{Y^*})$ via the homeomorphism $T^*(y^*) \mapsto f_{T^*y^*}$. Here $T^*(B_{Y^*})$ is endowed with the weak$^*$ topology.
So, $K_T$ is compact in $B_{\ell_\infty}$ with the weak$^*$ topology. Define $D_\llll \subset \llll(X,Y) \times B_{\ell_\infty}$ as
follows
$$(T,f) \in D_\llll \iff f \in K_T.$$
Using the following characterization, the set $D_\llll$ is Borel.
\begin{equation*}
\begin{split}
(T,f) \in D_\llll \iff & \forall n, m, k \in \nn~\forall q, p\in \qq \mbox{ we have}\\
& ( p Td_n(X) +  q Td_m(X)=  Td_k(X) \implies \\
& p\|d_n(X)\| f(n) + q \|d_m(X)\| f(m) = \|d_k(X)\| f(k)).
\end{split}
\end{equation*}
Notice that for each $T \in \llll(X,Y)$ the set $D_T= \{f : (T,f)\in D_\llll\}$ is equal to $K_T$ and is therefore compact. Applying Theorem \ref{slice}, $\Phi  : \llll(X,Y) \to K(B_{\ell_\infty})$ defined by $\Phi(T) = K_T$ is a Borel map. Finally, note that
$$T \in \aaa(X,Y) \iff  \Phi(T) = K_T \in \Sigma= \{K \in K(B_{\ell_\infty}) : K \mbox{ is norm-separable}\}.$$
Using Proposition \ref{Sigma} we have that $\aaa(X,Y)$ is  Borel
reducible to a coanalytic set and is hence itself coanalytic. 
\end{proof}



Concerning reflexive Banach spaces with bases as well as Banach
spaces with bases and separable dual, Argyros and Dodos \cite{ADo}
proved the following deep theorem.

\begin{thm}[\cite{ADo}]
Let $\aaa \subset SB$ be an analytic collection of reflexive Banach
spaces (resp. Banach spaces with separable dual) such that each $X
\in \aaa$ has a basis.  Then there is a reflexive Banach space $Z_\aaa$ (resp. Banach space with separable
dual) with a basis that contains every $X\in \aaa$ as
a complemented subspace. \label{ADcomp}
\end{thm}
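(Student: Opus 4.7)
The plan combines a uniform basis selection, a descriptive set-theoretic boundedness principle, and an amalgamation construction. First, using a parametric form of the Kuratowski--Ryll-Nardzewski selection theorem together with a Borel truncation of the basis constant, I would extract on a Borel superset of $\aaa$ a Borel choice $X \mapsto (e_n^X)_{n \in \nn}$ of a normalized Schauder basis of $X$ with uniformly bounded basis constant $K$. This reduces the problem to handling a Borel-parameterized family of bases, which is a more tractable object than an abstract analytic family of spaces.

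Next, I would invoke the boundedness principle for coanalytic ranks. Reflexivity and having separable dual are coanalytic conditions on $SB$ that admit natural coanalytic ranks via the Szlenk index of $X$ or of $X^*$. Since $\aaa$ is analytic and sits inside the corresponding coanalytic class, the relevant Szlenk indices are uniformly bounded by some countable ordinal $\omega^\xi$. This uniform ordinal bound is the crucial input for building a single universal space: it tells us in advance what asymptotic structure $Z_\aaa$ must be allowed to carry.

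The core construction would build $Z_\aaa$ as a Banach space with a basis $(z_t)_{t \in T}$ indexed by a carefully chosen countable tree $T$ into which all bases $(e_n^X)$ can be coded as branches. The norm would be defined via a mixed-Tsirelson or Schreier-type family whose complexity matches the bound $\xi$ obtained above. For each $X \in \aaa$, a branch of $T$ yields a subsequence $(z_{t_n^X})$ of the basis of $Z_\aaa$ that is equivalent to $(e_n^X)$, and whose closed linear span is complemented in $Z_\aaa$ by a canonical restriction or averaging projection along the tree. Borel dependence of $X \mapsto (t_n^X)$ follows from the Borel selection in the first step.

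The main obstacle is the tension between reflexivity (or the shrinking condition) and complementation. Preserving reflexivity requires that the norm controls the iterated $w^*$-slicings of the dual ball, which forces the Schreier structure to match the ordinal $\omega^\xi$; preserving complementation requires that the tree projections remain uniformly bounded, which constrains how finely the norm may branch across $T$. Balancing these is the technical core of the Argyros--Dodos argument, handled by a delicate inductive definition of the norming set on $T$. For the separable-dual case, reflexivity is replaced by a James-tree-type modification that keeps the basis shrinking while bounding the Szlenk index of $Z_\aaa$ by roughly $\omega^{\xi+1}$.
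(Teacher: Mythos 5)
Your high-level architecture --- pass from the analytic family of spaces to a family of basic sequences, amalgamate these into a tree basis, and recover each space as a complemented branch --- matches the outline the paper gives for the Argyros--Dodos construction. But two of your concrete steps have genuine problems. First, the reduction to sequences: the Kuratowski--Ryll-Nardzewski theorem selects \emph{dense sequences} from the Effros--Borel structure, not Schauder bases, and there is no Borel selection of a Schauder basis for each member of an analytic family; nor is there any reason the basis constants over $\aaa$ should be uniformly bounded. Neither is needed: the paper simply observes that $(x_n)\mapsto[x_n]$ is Borel from $S_{C(2^\nn)}^\nn$ to $SB$ and that the basic sequences form a Borel set, so the collection $\bbb$ of \emph{all} normalized basic sequences spanning a member of $\aaa$ is analytic, and that is all that Theorems \ref{ADseq1} and \ref{ADseq2} require as input.

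Second, and more seriously, the norm you propose on the tree is the wrong tool for the conclusion you need. A mixed-Tsirelson or Schreier-type norm calibrated to a uniform Szlenk bound $\omega^\xi$ (obtained from $\Pi^1_1$-boundedness) is the technology behind the Odell--Schlumprecht--Zs\'ak \emph{embedding} theorems; it does not make the restriction to a branch a bounded projection, nor a branch equivalent to the prescribed sequence, which is precisely what Theorem \ref{ADcomp} asserts. The mechanism that delivers complementation in \cite{ADo} is the $\ell_2$ Baire sum of a Schauder tree basis: the norm is a supremum of $\ell_2$-sums over pairwise incomparable segments, so restriction to any branch is automatically a bounded projection and the branch span is the coded space; reflexivity (resp. the shrinking property) of the whole sum is then deduced from that of the branches by analyzing the tree-singular subspaces, with no ordinal calibration of the norm and no Szlenk-index boundedness argument entering at this stage. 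As written, your proposal asserts both the complementation and the reflexivity without exhibiting a norm for which the two can be verified simultaneously, and resolving that tension is exactly the content of the cited theorem.
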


Although it is possible for us to apply Theorem \ref{ADcomp} as a
black box, we give some brief description here about how the space
$Z_\aaa$ is constructed.  Let $\aaa \subset SB$ be an analytic
collection of reflexive spaces.  Since the map from $S_{C(2^\nn)}^\nn$
to $SB$ given by $(x_n)_{n\in\nn}\mapsto [x_n]_{n\in\nn}$ is Borel
and the set of basic sequences in a Banach space is Borel, we
obtain an analytic set $\bbb$ of basic sequences in $S_{C(2^\nn)}$
such that for every reflexive Banach space $X\in \aaa$ there exists
$(x_n)\in\bbb$ such that $(x_n)$ is a basis for $X$ and for every
$(x_n)\in\bbb$ we have that $[x_n]\in\aaa$.  Instead of working with
an analytic collection of Banach spaces $\aaa$, we can now work with
an analytic collection of basic sequences $\bbb$.  Argyros and
Dodos, then give a procedure to amalgamate $\bbb$ into a tree basis
$(x_\alpha)_{\alpha\in Tr}$, where $Tr$ is a finitely branching
tree. That is, they construct $(x_\alpha)_{\alpha\in Tr}\subset
S_{C(2^\nn)}$ such that $(x_\alpha)_{\alpha\in Tr}\subset S_{C(2^\nn)}$
is a basic sequence under any ordering which preserves the tree
order, $[x_\alpha]$ is reflexive, and every $(x_n)\in \bbb$ is
equivalent to a branch of $(x_\alpha)_{\alpha\in Tr}$.  Furthermore,
if $(\alpha_n)_{n\in\nn}$ is a branch of $Tr$ then the restriction
operator $P:[x_\alpha]_{\alpha\in
Tr}\rightarrow[x_{\alpha_n}]_{n\in\nn}$ given by $P(\sum a_\alpha
x_\alpha)=\sum a_{\alpha_n} x_{\alpha_n}$ is a bounded projection.
Thus, Theorem \ref{ADcomp} follows from the following results.

\begin{thm}[\cite{ADo}]
Let $\aaa \subset {C(2^\nn)}^\nn$ be an analytic collection of
normalized shrinking and boundedly complete basic sequences. There
is a reflexive Banach space $Z$ with a basis $(z_n)$ such that if
$(x_n)\in \aaa$ then there exists a subsequence $(k_n)$ of $\nn$
such that $(x_n)$ is equivalent to $(z_{k_n})$ and $[z_{k_n}]$ is
complemented in $Z$. \label{ADseq1}
\end{thm}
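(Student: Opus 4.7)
\emph{Proof plan.} Since $\aaa$ is analytic, the first step is to represent it by a Souslin-type scheme suitable for amalgamation. Using the Kuratowski--Ryll-Nardzewski selection theorem applied to a continuous surjection $\bs \twoheadrightarrow \aaa$, together with a countable dense set $D \subset S_{C(2^\nn)}$ of normalized vectors, I would produce a tree $T$ on $\nn \times D$ whose branches enumerate (dense approximations of) sequences in $\aaa$. Exploiting the small-perturbation stability of basic sequences, I would then thin $T$ to a finitely branching subtree $Tr$ on $\nn$ labelled by $D$, while ensuring that every $(x_n) \in \aaa$ still appears, up to equivalence, as the labelling along some infinite branch of $Tr$.

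Next, the labelled tree is amalgamated into a single Schauder tree basis $(x_\alpha)_{\alpha \in Tr}$ inside $C(2^\nn)$. The amalgamation is carried out by enumerating the nodes of $Tr$ in an order consistent with the tree order (e.g.\ a level-by-level enumeration), placing the vector attached to each node at the corresponding position, and inserting controlled perturbations/gaps between consecutive levels so that three conditions hold simultaneously: (i) $(x_\alpha)_{\alpha \in Tr}$ is a Schauder basis of $Z := [x_\alpha]_{\alpha \in Tr}$ under any enumeration preserving the tree order, with a uniform basis constant; (ii) for each infinite branch $(\alpha_n)_{n \in \nn}$ the subsequence $(x_{\alpha_n})$ is equivalent to the sequence in $\aaa$ coded by that branch; and (iii) the formal restriction $P_{(\alpha_n)} : \sum_\alpha a_\alpha x_\alpha \mapsto \sum_n a_{\alpha_n} x_{\alpha_n}$ is a bounded projection with a bound independent of the branch. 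This is essentially the Argyros--Dodos amalgamation, in which a James-tree-style norm controls both the global basis and the branch projections.

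To obtain reflexivity of $Z$, I would verify that the tree basis $(x_\alpha)$ is both shrinking and boundedly complete, and then invoke James's theorem. For shrinkingness, if $(x_\alpha)$ failed to be shrinking one could, by a gliding-hump argument combined with a tree-pruning step (Ramsey on $Tr$), extract a seminormalized block sequence supported inside a single branch subspace that is equivalent to the $\ell_1$-basis; this contradicts the hypothesis that every sequence in $\aaa$ is shrinking. Bounded completeness is proved dually, using the uniformly bounded branch projections $P_{(\alpha_n)}$ to reduce a potential non-boundedly-complete block sequence to one inside a single branch, contradicting the boundedly complete hypothesis. Complementation of branch subspaces is then immediate from (iii).

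The main obstacle, as expected, is the amalgamation step: realising each $(x_n)\in\aaa$ as a branch up to equivalence while maintaining \emph{uniform} basis and branch-projection constants across the whole tree, so that the reflexivity argument can be carried out uniformly. This requires a delicate combinatorial layout of $Tr$ together with a carefully tuned James-tree type norm; once this is in place, the verification of shrinkingness, bounded completeness, and complementation proceeds by the tree-induction arguments sketched above.
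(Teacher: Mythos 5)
This theorem is quoted from \cite{ADo}; the paper does not prove it but only sketches the construction (amalgamate the analytic family into a Schauder tree basis so that branch subspaces are complemented via restriction projections), and your skeleton --- code $\aaa$ by a tree, amalgamate into a tree basis with a James-tree/Baire-sum type norm, obtain uniform branch projections, then deduce reflexivity from shrinking plus boundedly complete plus James --- matches that sketch. The genuine gap is in the reflexivity verification, which is the analytic heart of the Argyros--Dodos argument. Your plan reduces every ``bad'' block sequence to one essentially supported on a single branch by a gliding hump plus Ramsey argument. That reduction is false: in the $\ell_2$ Baire sum a normalized block sequence can spread over pairwise incomparable nodes and stay uniformly far from every branch subspace. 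What Argyros and Dodos actually prove is a dichotomy: every bounded block sequence either has a subsequence essentially captured by finitely many branches, or has a subsequence that is $\mathcal{X}$-singular and equivalent to the unit vector basis of $\ell_2$. The second alternative is weakly null and therefore harmless for both shrinkingness and bounded completeness, while the first reduces to the hypotheses on the branches. Without this dichotomy the tree induction you describe does not get started.

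A second, smaller error: non-shrinkingness does not produce a block sequence equivalent to the $\ell_1$-basis --- the summing basis of $c_0$ is non-shrinking and $c_0$ contains no copy of $\ell_1$. The correct criterion is that a basis is shrinking iff every bounded block sequence is weakly null, and the contradiction with shrinkingness of the branch bases comes from a functional bounded away from zero on the block sequence, not from an $\ell_1$-copy; the dual argument for bounded completeness has the same issue. Finally, the uniformity of the basis and branch-projection constants, which you single out as the main obstacle, is in fact the routine part: once the norm is taken to be the $\ell_2$ Baire sum over pairwise incomparable segments, the branch restriction operators are norm-one essentially by definition. The real difficulty sits in the block-sequence dichotomy above, which your proposal does not address.
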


\begin{thm}[\cite{ADo}]
Let $\aaa \subset {C(2^\nn)}^\nn$ be an analytic collection of
normalized shrinking basic sequences. There is a Banach space $Z$
with a shrinking basis $(z_n)$ such that if $(x_n)\in \aaa$ then
there exists a subsequence $(k_n)$ of $\nn$ such that $(x_n)$ is
equivalent to $(z_{k_n})$ and $[z_{k_n}]$ is complemented in $Z$.
\label{ADseq2}
\end{thm}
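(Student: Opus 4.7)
The plan is to mirror the amalgamation strategy that (as sketched just above) underlies Theorem \ref{ADseq1}, dropping the boundedly-complete half so that only the shrinking property has to be preserved. First I would parametrize $\aaa$ using descriptive set theory: since $\aaa\subset C(2^\nn)^\nn$ is analytic, it is the continuous image of a closed subset $D\subset\bs$. Using a pruning argument together with the Kuratowski--Ryll-Nardzewski-type selections $(s_n)$ and $(d_n)$ introduced before Proposition \ref{opscoanalytic}, I would build a finitely branching tree $Tr\subset\bt$ and attach a vector $x_\alpha\in C(2^\nn)$ of norm one to each node $\alpha\in Tr$ so that every infinite branch of $Tr$ codes (up to equivalence) a sequence in $\aaa$, and every sequence in $\aaa$ is equivalent to some branch. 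Each branch is therefore a normalized shrinking basic sequence.

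Next I would define a norm on $c_{00}(Tr)$ by a segment-type formula in the Argyros--Dodos style, roughly
\[
\|z\|_Z=\sup\Big\{\Big\|\sum_{\alpha\in \seg} a_\alpha x_\alpha\Big\|_{C(2^\nn)}:\seg\text{ a segment of }Tr\Big\},
\]
augmented by auxiliary terms that encode the individual branch norms. The norm is designed so that (i) any linear order on $Tr$ extending the tree order makes $(x_\alpha)_{\alpha\in Tr}$ a monotone Schauder basis of the completion $Z$; (ii) for each branch $\beta=(\alpha_n)_n$ of $Tr$ the subsequence $(x_{\alpha_n})$ in $Z$ is equivalent to the sequence in $\aaa$ that $\beta$ codes; and (iii) the natural restriction map $P_\beta\colon\sum a_\alpha x_\alpha\mapsto\sum_n a_{\alpha_n}x_{\alpha_n}$ is bounded, giving a projection onto $[x_{\alpha_n}]$. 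Items (i) and (iii) fall out of the segment definition plus a monotonicity check, and (ii) is arranged by choosing the auxiliary terms so that restriction to a single branch recovers the original branch norm up to equivalence.

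The main obstacle is showing that the resulting basis $(z_n)$ is shrinking. In Theorem \ref{ADseq1} one establishes shrinking and boundedly complete simultaneously; here only the shrinking half is needed. By the standard criterion, it suffices to show that $z^*(y_k)\to 0$ for every bounded block sequence $(y_k)$ of $(z_n)$ and every $z^*\in Z^*$. The norm of $Z$ is built from segment functionals, and a segment of $Tr$ is either finite (in which case the functional is eventually disjoint from $\supp(y_k)$) or converges to an infinite branch $\beta$, in which case the restriction of $z^*$ to $\overline{[x_{\alpha_n}]}$ is a functional on a shrinking basis and so already satisfies $z^*(y_k)\to 0$. A tree bookkeeping in the spirit of \cite{ADo} reduces a general $z^*$ to a weak$^*$-limit of convex combinations of such segment functionals, delivering the shrinking conclusion. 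Complementation of $[x_{\alpha_n}]$ along each branch is then witnessed by the projection $P_\beta$ from step (iii), completing the construction.
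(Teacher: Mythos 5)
First, note that the paper does not prove this statement at all: it is quoted verbatim from Argyros--Dodos \cite{ADo}, and the text only sketches the underlying construction (amalgamate the analytic family of basic sequences into a tree basis $(x_\alpha)_{\alpha\in Tr}$ over a finitely branching tree so that every sequence in $\aaa$ is equivalent to a branch and the branch restrictions are bounded projections). Your proposal reproduces that high-level outline faithfully, so at the level of strategy you are aligned with what the paper points to. The problem is that everything your outline takes for granted is precisely the content of the theorem in \cite{ADo}, and the one step you do argue --- shrinkingness of the amalgamated basis --- contains a genuine gap. The assertion that a general $z^*\in Z^*$ is a weak$^*$-limit of convex combinations of segment functionals is unjustified, and it is false for segment-built tree amalgams in general: the James tree space $JT$ is defined by a segment-type norm, every branch of $JT$ is James' space $J$ whose unit vector basis \emph{is} shrinking, and yet $JT^*$ is nonseparable, so no basis of $JT$ is shrinking. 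Thus ``each branch shrinking $\Rightarrow$ the tree amalgam is shrinking'' does not follow from soft bookkeeping; it depends delicately on the exact combinatorial form of the norm.

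Relatedly, the norm you write down, $\sup_{\seg}\|\sum_{\alpha\in\seg}a_\alpha x_\alpha\|$ over single segments (plus unspecified ``auxiliary terms''), is not the norm used in \cite{ADo}. They take the $\ell_2$ Baire sum: the supremum of $\bigl(\sum_{i=1}^{k}\|\sum_{\alpha\in \seg_i}a_\alpha x_\alpha\|^2\bigr)^{1/2}$ over finite families of \emph{pairwise incomparable} segments. The incomparability is what keeps each branch subspace isometric (not just isomorphic) to the original basic sequence rather than to a James-type variation space, and the $\ell_2$ exterior sum is what drives the shrinking proof: one shows a dichotomy for normalized block sequences --- either a subsequence is supported essentially along a single branch, where branch-shrinkingness applies, or a subsequence is dominated by incomparable-segment data and is equivalent to the $\ell_2$ basis, hence weakly null. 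Without that dichotomy (which is several pages of work in \cite{ADo} and in Dodos' book) your argument does not close. If you intend to use this theorem, cite it as the paper does; if you intend to prove it, the shrinking dichotomy for the $\ell_2$ Baire sum is the missing ingredient you must supply.
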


Given an analytic collection $\aaa$ of weakly compact operators, our
goal is to obtain an analytic collection $\bbb$ of normalized
shrinking and boundedly complete basic sequences such that for every
$T\in \aaa$, there exists $(x_n)\in \bbb$ such that $T$ factors
through $[x_n]$.  We then are able to apply Theorem \ref{ADseq1} and
obtain a separable reflexive Banach space $Z$ such that every
$T\in\aaa$ factors through a complemented subspace of $Z$.  Hence,
every $T\in\aaa$ factors through $Z$ itself.  This idea of creating
a complementably universal Banach space $Z$ inorder to lift
operators defined on an analytic collection of Banach spaces with
bases was used by Dodos in \cite{DodosQ}, where he characterizes for
what sets of separable Banach spaces $\ccc$ does there exist a
separable Banach space $Z$ such that $\ell_1$ does not embed into
$Z$ and every $X\in\ccc$ is a quotient of $Z$.

\section{Parametrized Factorization}

\begin{notation}
In the rest of the paper we set the following notation.

\begin{itemize}
\item[(a)] $X$ denotes a separable Banach space and $Y$ denotes a Banach space with a Schauder basis.
\item[(b)] Let $T \in \llll(X,Y)$. Denote by $(y_n^T)_{n \in \nn}$ a basis of $Y$ that depends on $T$ and for $k \inn$, let
$P^T_k:Y \to [ y_n^T : n\leqslant k]$ be the natural projection.
\item[(c)] Let $y^T_0=\sum_{n \inn} \frac{1}{2^n} y^T_n$ and $E_T :=\overline{co(T(B_X) \cup\{y^T_0\})}$.

\item[(d)] Define
\begin{equation*}
W_{T} = \overline{\bigcup_{k \inn} P^T_k (E_T)}.
\end{equation*}
Note that $W_T$ is closed, bounded, convex and symmetric. Also, $P^T_k (W_T) \subset W_T $ for each $k \in \nn$.
\item[(e)] Let $W \subset Y$ be closed, convex, bounded and symmetric and for each $m \in \nn$ define
\begin{equation*}
W^m:=2^mW + 2^{-m}B_Y.
\end{equation*}
\item[(f)] Let $\|\cdot\|_{W^m}$ denote the Minkowski gauge norm of the set
$W^m$.  That is,
\begin{equation*}
\|y\|_{W^m} = \inf \{ \lambda >0 :\frac{y}{\lambda} \in W^m\}.
\end{equation*}
\item[(g)] Let
\begin{equation*}
Z_{T}  = \{ z \in Y : \sum_{m=1}^\infty \|z \|_{W^m_{T}}^2 < \infty\} \mbox{ and } \|z\|_T =  \bigg(\sum_{m=1}^\infty \|z \|_{W^m_{T}}^2\bigg)^\frac{1}{2}.
\end{equation*}
\end{itemize}
\end{notation}

The following items are proved in \cite{DFJP}. The reader may also want to consult \cite[Appendix B]{DodosBook} for a nice treatment of this material.
\begin{thm}[\cite{DFJP}]
The following hold.
\begin{itemize}
\item[(a)] There exist $T_1:X \to Z_T$ and $T_2:Z_T \to Y$ such that $T=T_2 T_1$; in other words, $T$ factors through
$Z_T$.  Furthermore, $T_2$ is constructed to be one-to-one.
\item[(b)]  $y^T_n \in \sspan W_{T}$ for each $n\in\nn$.  Let $z^T_n=T_2^{-1}(y^T_n)$ for each $n \inn$ (this is well defined as $T_2$ is one-to-one).
 The sequence $(z_n^T)_{n \inn}$ is a (not normalized) basis for $Z_T$.
\item[(c)] The space $Z_{T} $ is reflexive if and only if $W_{T} $ is weakly compact.
\item[(d)] If $T$ is weakly compact and $(y^T_n)$ shrinking then $W_{T}$ weakly compact.
\end{itemize}
\label{theoremitems}
\end{thm}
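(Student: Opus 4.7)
The plan is to execute the Davis--Figiel--Johnson--Pe{\l}czy{\'n}ski interpolation for the specific set $W_T$ introduced in the notation. Two structural facts drive everything: $W_T$ is closed, bounded, convex, and symmetric with $P^T_k(W_T)\subset W_T$; and $E_T\subset W_T$ because $P^T_k(e)\to e$ in norm for every $e\in E_T$ by the basis property of $(y^T_n)$. For part (a), the inclusions $2^{-m}B_Y\subset W^m_T\subset (2^m R+2^{-m})B_Y$, with $R=\sup\{\|y\|:y\in W_T\}$, make each $\|\cdot\|_{W^m_T}$ an equivalent norm on $Y$, and a routine $\ell_2$-Fatou argument gives that $(Z_T,\|\cdot\|_T)$ is a Banach space. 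Let $T_2:Z_T\hookrightarrow Y$ be the inclusion, one-to-one and bounded via $\|y\|_Y\le(2R+\tfrac12)\|y\|_{W^1_T}\le(2R+\tfrac12)\|y\|_T$, and define $T_1x=Tx$ viewed in $Z_T$. For $x\in B_X$ we have $Tx\in E_T\subset W_T$; since $W_T$ is convex and contains $0$, also $Tx/2^m\in W_T$, so $2^m Tx\in 2^m W_T\subset W^m_T$, giving $\|Tx\|_{W^m_T}\le 2^{-m}$. Hence $\|T_1x\|_T^2\le\sum_{m\ge 1}4^{-m}=\tfrac13$ and $T=T_2T_1$ is the factorization claimed.

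For part (b), both $y^T_0\in E_T\subset W_T$ and $P^T_k(y^T_0)=\sum_{n\le k}2^{-n}y^T_n\in P^T_k(E_T)\subset W_T$, so
\begin{equation*}
2^{-k}y^T_k=P^T_k(y^T_0)-P^T_{k-1}(y^T_0)\in W_T-W_T\subset 2W_T,
\end{equation*}
which yields $y^T_k\in\sspan W_T$ and $y^T_k\in 2^{k+1}W_T$. The two bounds $\|y^T_k\|_{W^m_T}\le 2^m\|y^T_k\|_Y$ (from $2^{-m}B_Y\subset W^m_T$) and $\|y^T_k\|_{W^m_T}\le 2^{k+1-m}$ (from $y^T_k\in 2^{k+1}W_T$) combine to give $y^T_k\in Z_T$; set $z^T_n=T_2^{-1}(y^T_n)$. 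From $P^T_k(W^m_T)\subset 2^m P^T_k(W_T)+2^{-m}P^T_k(B_Y)\subset K W^m_T$, where $K$ is the basis constant of $(y^T_n)$, one obtains $\|P^T_k y\|_T\le K\|y\|_T$ for $y\in Z_T$; an $\ell_2$-dominated convergence argument (using $\|P^T_k y-y\|_{W^m_T}\le (K+1)\|y\|_{W^m_T}$ and $\|P^T_k y-y\|_{W^m_T}\to 0$ for each fixed $m$) then yields $P^T_k y\to y$ in $\|\cdot\|_T$ for every $y\in Z_T$. Hence $(z^T_n)$ is a Schauder basis for $Z_T$.

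Item (c) has an easy direction and a hard direction. For the easy direction, $W_T$ is $\|\cdot\|_T$-bounded by the estimate in part (a), so reflexivity of $Z_T$ renders $W_T$ relatively weakly compact in $Z_T$, and weak-to-weak continuity of $T_2$ transports this into $(Y,w)$. The converse, that weak compactness of $W_T$ forces reflexivity of $Z_T$, is the substantive content of DFJP; the plan is to follow the standard treatment in \cite[Appendix B]{DodosBook}, which realizes $B_{Z_T}$ as a weakly closed subset of an $\ell_2$-style product of the $W^m_T$, each of which is weakly compact whenever $W_T$ is. This converse is the main technical obstacle, but is by now routine.

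For (d), the set $T(B_X)\cup\{y^T_0\}$ is weakly relatively compact by weak compactness of $T$, so Krein's theorem gives that $E_T$ is weakly compact. The plan is to prove $\bigcup_k P^T_k(E_T)$ is weakly relatively compact; then $W_T$, being convex and norm-closed, is weakly closed by Mazur and hence weakly compact as a weakly closed subset of the weakly compact weak closure of $\bigcup_k P^T_k(E_T)$. By Eberlein--\v{S}mulian, given a sequence $P^T_{k_i}(e_i)$, pass to $e_i\to e$ weakly in $E_T$. If $(k_i)$ is bounded we may assume it is constant and use weak continuity of a single $P^T_k$. If $k_i\to\infty$, shrinkingness of $(y^T_n)$ gives $(P^T_{k_i})^*y^*\to y^*$ in norm for each $y^*\in Y^*$, and
\begin{equation*}
y^*\bigl(P^T_{k_i}(e_i)-e\bigr)=(P^T_{k_i})^*y^*(e_i-e)+y^*\bigl(P^T_{k_i}(e)-e\bigr);
\end{equation*}
the second summand vanishes in the limit since $P^T_k(e)\to e$, and the first, which splits as $((P^T_{k_i})^*y^*-y^*)(e_i-e)+y^*(e_i-e)$, goes to zero as well (first piece by norm convergence of $(P^T_{k_i})^*y^*$ with $(e_i-e)$ bounded, second by weak convergence of $e_i$). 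Thus $P^T_{k_i}(e_i)\to e$ weakly, completing the proof of (d).
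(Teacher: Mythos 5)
Your proposal is correct and follows exactly the standard Davis--Figiel--Johnson--Pe{\l}czy{\'n}ski interpolation argument that the paper itself cites for this theorem (to \cite{DFJP} and \cite[Appendix B]{DodosBook}); in particular your treatment of (b) via $P^T_k(y^T_0)-P^T_{k-1}(y^T_0)=2^{-k}y^T_k\in W_T-W_T$ is precisely the sketch the paper gives. Deferring the hard direction of (c) to the realization of $B_{Z_T}$ inside an $\ell_2$-product of the sets $W^m_T$ is consistent with the paper, which does not reprove that step either, and your Eberlein--\v{S}mulian argument for (d) is the standard proof of \cite[Lemma 2]{DFJP} that the paper invokes later.
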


We sketch the proof of (b). Note that $y_0 \in W_T$ and $P^T_1(y_0)=\frac{1}{2}y_1$. Hence $y_1 \in \sspan W_T$ because
$P^T_1(W_T) \subset W_T$. Also, for $n >1$, $(P^T_n - P^T_{n-1} )y_0 = \frac{1}{2^{n}}y^T_n \in W_T - W_T$. Thus $y_n \in \sspan W_T$ for each $n \in \nn$.

The next remark follows directly from the definition of the basis (see Theorem \ref{theoremitems}(b)).

\begin{rem}
A sequence $(x_n)_{n \inn}$ in $C(2^\nn)$ is $1$-equivalent to the basis $(z_n^T)_{n \inn}$ of $Z_T$ if and only if for each $(a_n)_n \in c_{00}$
\begin{equation*}
\sum_{m=1}^\infty \| \sum_{n=1}^\infty a_n y^T_n \|_{W^m_T}^2 = \| \sum_{n=1}^\infty a_n x_n\|^2
\end{equation*}
\label{basisequiv}
\end{rem}

\begin{lem} Let $\bbb\subset \llll(X,Y)$ be Borel and suppose the map $\bbb \ni T \mapsto (y_n^T)_{n \inn} \in Y^\nn$
is Borel. Then the following hold:
\begin{itemize}
\item[(a)] The map $\bbb \ni T \mapsto y_0^T \in Y$ is Borel.
\item[(b)] The map $\bbb \ni T \mapsto E_T \in F(Y)$ is Borel.
\item[(c)] The map $\bbb \ni T \mapsto W_T \in F(Y)$ is Borel. Moreover, for each $m \inn$ the map
$\bbb \ni T \mapsto W^m_T \in F(Y)$ is Borel.
\item[(d)] The map $\bbb \times Y \ni (T,y) \mapsto \|y \|_{W^m_T}$ is Borel.
\end{itemize}
\label{manyitems}
\end{lem}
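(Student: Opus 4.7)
The plan is to reduce each item to the standard characterization that a map $\phi: \bbb \to F(Y)$ is Borel for the Effros-Borel $\sigma$-algebra if and only if there exist Borel maps $f_n: \bbb \to Y$ such that $(f_n(T))_{n \in \nn}$ is norm-dense in $\phi(T)$ for every $T$, together with the facts that the evaluation graph $\{(F, y) \in F(Y) \times Y : y \in F\}$ is Borel and that any function between standard Borel spaces whose graph is Borel with singleton slices is itself Borel. Item (a) is then immediate, since $y_0^T$ is the norm-limit of the partial sums $\sum_{n \leqslant N} 2^{-n} y_n^T$ and each partial sum is a Borel function of $T$ by the hypothesis on $T \mapsto (y_n^T)$.

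For (b), since the strong operator structure makes $T \mapsto T d_n(X)$ Borel for each fixed $n$, and $(d_n(X))_n$ is norm-dense in $B_X$, the rational convex combinations of $\{T d_n(X) : n \in \nn\} \cup \{y_0^T\}$ form, upon a fixed Borel enumeration, a dense Borel selector family for $E_T$, whence $T \mapsto E_T$ is Borel. For (c), the crucial technical step is to prove that for each $k$ the map $(T, v) \mapsto P^T_k(v)$ is Borel from $\bbb \times Y$ to $Y$. I would characterize $P^T_k(v)$ as the unique $w \in Y$ satisfying both $w \in [y_1^T, \ldots, y_k^T]$ and $v - w \in [y_m^T : m > k]$. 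Both conditions can be rewritten as Borel statements in $(T, v, w)$, since membership in either the finite-dimensional span or the closed tail span is testable by rational-precision approximation using $(y_n^T)$; thus the graph of $(T, v) \mapsto P^T_k(v)$ is Borel with singleton slices, so the map is Borel. Combining with the dense selectors from (b), the family $(P^T_k(e_j(T)))_{j,k}$ is a dense Borel selector family in $W_T = \overline{\bigcup_k P^T_k(E_T)}$, and scaling together with a fixed countable norm-dense subset $(r_\ell)$ of $B_Y$ yields dense Borel selectors $\{2^m u_j(T) + 2^{-m} r_\ell : j, \ell \in \nn\}$ for $W^m_T$.

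For (d), by definition of the Minkowski gauge,
\begin{equation*}
\{(T, y) \in \bbb \times Y : \|y\|_{W^m_T} < c\} = \bigcup_{\lambda \in \qq \cap (0, c)} \{(T, y) : y/\lambda \in W^m_T\},
\end{equation*}
and each set on the right is the pullback of the Borel evaluation graph in $F(Y) \times Y$ under the Borel map $(T, y) \mapsto (W^m_T, y/\lambda)$, hence Borel; so $(T, y) \mapsto \|y\|_{W^m_T}$ has Borel strict sublevel sets and is itself Borel. The main obstacle is step (c), specifically showing that $T \mapsto P^T_k(v)$ is Borel, because the biorthogonal functionals of $(y_n^T)$ depend on $T$ in a genuinely nontrivial way; the trick is to replace any direct computation of coefficients with the Borel range/kernel characterization of the projection and then invoke the Borel-graph theorem.
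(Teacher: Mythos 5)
Your proof is correct, and for parts (a), (b) and (d) it is essentially the paper's argument: (a) is the Borel hypothesis on $T\mapsto(y_n^T)$ composed with a (continuous or pointwise-limit) summation; (b) and (c) go through the dense-Borel-selector criterion for maps into $F(Y)$ (the paper's Fact~\ref{closuremap}); and (d) uses exactly the same rational-scaling description of the strict sublevel sets of the Minkowski gauge, pulled back through the Borel map $T\mapsto W^m_T$. The one place where you genuinely diverge is the key step of (c). The paper simply defines $f_{n,k,q}(T)=P_k\bigl(qTx_n+(1-q)y_0^T\bigr)$ and asserts Borelness ``by the same argument as (b)'' --- which is literally immediate only when the basis, and hence $P_k$, does not depend on $T$ (as in the shrinking-basis application), but is not justified there when $(y_n^T)$ varies with $T$, as it does in the $C(2^\nn)$ case. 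You instead prove that $(T,v)\mapsto P^T_k(v)$ is Borel by characterizing $P^T_k(v)$ as the unique $w$ with $w\in[y_1^T,\dots,y_k^T]$ and $v-w\in[y_m^T:m>k]$, observing that both membership conditions are Borel in $(T,v,w)$ via rational approximation, and invoking the Borel-graph theorem. This costs an extra lemma but buys robustness: it handles the $T$-dependent bases that the later sections actually require, whereas the paper's phrasing quietly leans on the constant-basis case. One small point to keep in mind: for your selector family for $W^m_T=2^mW_T+2^{-m}B_Y$ you should note that density in the algebraic sum suffices since Fact~\ref{closuremap} produces the closure anyway.
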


The proof of Lemma \ref{manyitems} will rely on the following
tool from Descriptive Set Theory.

\begin{fact}
Suppose $E$ is a standard Borel space, $P$ is a Polish space and for
each $n \inn$, $f_n: E \to P$ is a Borel map. Then the map $\Phi: E
\to F(P)$ defined by $\Phi(x) = \overline{\{f_n(x) :n \inn\}}$ for
all $x \in E$ is Borel. \label{closuremap}
\end{fact}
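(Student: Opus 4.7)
The plan is to reduce the Borel-measurability of $\Phi$ to checking preimages of the generators of the Effros--Borel $\sigma$-algebra, and then exploit the elementary fact that closure is invisible when intersected with an open set.

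First note that $\Phi(x)=\overline{\{f_n(x):n\in\nn\}}$ is a closed subset of $P$ by construction, so $\Phi$ does take values in $F(P)$. Recall from the preliminaries that the Effros--Borel $\sigma$-algebra on $F(P)$ is generated by the family $\{F \in F(P) : F \cap U \neq \emptyset\}$ indexed by the open subsets $U \subset P$, so to prove that $\Phi$ is Borel it suffices to show that for every such $U$ the set
\begin{equation*}
\Phi^{-1}\bigl(\{F \in F(P): F \cap U \neq \emptyset\}\bigr) = \{x \in E : \overline{\{f_n(x) : n \in \nn\}} \cap U \neq \emptyset\}
\end{equation*}
is Borel in $E$.

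The main (and essentially only) observation is that for any subset $A \subset P$ and any open $U \subset P$, one has $\overline{A} \cap U \neq \emptyset$ if and only if $A \cap U \neq \emptyset$: the nontrivial direction holds because any $y \in \overline{A} \cap U$ has an open neighborhood inside $U$, and that neighborhood must meet $A$ by the definition of closure. Applying this with $A = \{f_n(x) : n \in \nn\}$ collapses the preimage above to
\begin{equation*}
\{x \in E : \exists\, n \in \nn,\; f_n(x) \in U\} \;=\; \bigcup_{n \in \nn} f_n^{-1}(U),
\end{equation*}
which is a countable union of Borel subsets of $E$ since each $f_n$ is Borel by hypothesis. There is no genuine obstacle here; the content of the argument is just the two-line topological interplay between openness of $U$ and the definition of closure, combined with the description of the Effros--Borel generators.
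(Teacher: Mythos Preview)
Your proof is correct and is the standard argument: reduce to Effros--Borel generators, use that $\overline{A}\cap U\neq\emptyset\iff A\cap U\neq\emptyset$ for open $U$, and conclude with a countable union of Borel preimages. The paper states this result as a Fact without proof, so there is nothing to compare against.
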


\begin{proof}[Proof of Lemma \ref{manyitems}(a)] Let $(x_n)_{n \inn}$ be dense in $B_X$. Define $\tau:\bbb \to Y^\nn$ and $p: Y^\nn \to Y$
by
$$\tau(T) = (y_n^T)_{n \inn} ~\forall T\in\bbb \quad \mbox{ and } \quad p( (x_n)_{n \inn} ) = \sum_{n=1}^\infty \frac{1}{2^n}x_n ~\forall(x_n)\in Y^\nn.$$
By assumption $\tau$ is Borel and it is easy to see that $p$
continuous. Therefore $p\circ \tau$ is Borel. This proves the claim,
as $p\circ\tau (T)=\sum_{n=1}^\infty \frac{1}{2^n}y_n^T=y_0^T$ for
all $T\in\bbb$.
\end{proof}

\begin{proof}[Proof of Lemma \ref{manyitems}(b)] Let $(x_n)_{n \inn}$ be dense in $B_X$ and let $U$ be a non-empty open subset of $Y$. Notice that
\begin{equation*}
\begin{split}
E_T \cap U \not= \emptyset &\iff \exists n \in \mathbb{N},~ q_1,q_2 \in \mathbb{Q}\cap[0,1] \mbox{ with } \\
& \quad \quad  \quad \quad q_1+ q_2 =1,~ q_1(Tx_n)  + q_2(y_0^T) \in U.
\end{split}
\end{equation*}
For $n\in\nn$ and  $q\in\mathbb{Q}\cap[0,1]$ we define
$\tau_{n,q}:\bbb \to Y^2$ and $p: Y^2 \to Y$ by
$$\tau_{n,q}(T) = (qTx_n,(1-q)y_0^T) \mbox{ and } p( (z_1,z_2 )) = z_1+z_2.$$
Using (a) and the definition of strong operator topology, the map
$\tau_{n,q}$ is Borel.  The map $p$ is continuous, and hence
 $p\circ \tau_{n,q}$ is Borel.
The set $\{p\circ \tau_{n,q}(T) :n \inn\textrm{ and }q\in
\mathbb{Q}\cap[0,1]\}$ is dense in $E_T$. Hence, the map $\bbb \ni T
\mapsto E_T \in F(Y)$ is Borel by Fact \ref{closuremap}.
\end{proof}

\begin{proof}[Proof of Lemma \ref{manyitems}(c)]  Let $(x_n)_{n \inn}$ be dense in $B_X$ and let $U$ be a non-empty open subset of $Y$.
For $n,k \inn$ and $ q\in \mathbb{Q}\cap[0,1]$ we define the map
$f_{n,k,q}:\bbb \to Y$ by
$$f_{n,k,q}(T) = P_k(qTx_n + (1-q)y_0^T).$$
Using the same argument used in the proof of Lemma
\ref{manyitems}(b), we have that  $f_{n,k,q}$ is a Borel map. The
set $\{f_{n,k,q}(T) :n,k \inn\textrm{ and }q\in
\mathbb{Q}\cap[0,1]\}$ is dense in $W_T$. Hence, the map $\bbb \ni T
\mapsto W_T \in F(Y)$ is Borel by Fact \ref{closuremap}. The same
argument gives that the map $\bbb \ni T \to W^m_T \in F(Y)$ is Borel
for each $m \inn$.
\end{proof}

\begin{proof}[Proof of Lemma \ref{manyitems}(d)] Let $r \in \mathbb{R}$
with $r>0$ and notice that for $(W,y) \in F(Y) \times Y$
$$\|y \|_{W} < r \iff \exists q \in \qq \mbox{ with } 0<q<r \mbox{ and } y \in qW.$$
Thus, the map $F(Y) \times Y \ni (W,y)\mapsto \|y \|_{W}$ is Borel as $qW$
is closed.  The map $(T,y)\mapsto (W_T^m,y)$ is Borel by part (c).
Hence, the map $(T,y)\mapsto \|y \|_{W^m_T}$ is Borel.
\end{proof}

\begin{lem}
Let $\bbb\subset \llll(X,Y)$ be a Borel set and suppose that the map
$\bbb \ni T \mapsto (y_n^T)_{n \inn} \in Y^\nn$ is Borel.  Then the
following set
$$\fff =\{ (T,(x_n)) \in \bbb \times C(2^\nn)^\nn:  (z_n^T) \mbox{ is 1-equivalent to } (x_n)\}$$
is Borel in $\llll(X,Y) \times  C(2^\nn)^\nn$. \label{lastlemma}
\end{lem}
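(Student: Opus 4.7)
The plan is to reduce the lemma to a countable intersection of Borel conditions via Remark \ref{basisequiv} and Lemma \ref{manyitems}. By Remark \ref{basisequiv}, $(T,(x_n)) \in \fff$ precisely when, for every $(a_n) \in c_{00}$,
\[
\sum_{m=1}^\infty \Bigl\| \sum_n a_n y_n^T \Bigr\|_{W_T^m}^2 = \Bigl\| \sum_n a_n x_n \Bigr\|^2.
\]
For fixed $T \in \bbb$, the left-hand side coincides with the squared DFJP norm $\|\sum a_n z_n^T\|_T^2$, which is the square of a norm on each finite-dimensional slice $\rr^N$; the right-hand side is the square of a seminorm on $\rr^N$. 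Both are therefore continuous functions of the coefficient vector, so the equality for all $(a_n) \in c_{00}$ is equivalent to the equality for all $(a_n) \in \qq^{<\nn}$, a countable collection of test coefficients.

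Next, I would fix a rational test vector $\mathbf{a} = (a_1,\ldots,a_N) \in \qq^{<\nn}$ and verify that both sides of the identity above are Borel as functions on $\bbb \times C(2^\nn)^\nn$. The right-hand side depends only on $(x_1,\ldots,x_N)$ continuously in the product topology of $C(2^\nn)^\nn$, so it is continuous. For the left-hand side, the hypothesis that $T \mapsto (y_n^T)_n$ is Borel, combined with continuity of finite summation, gives that $T \mapsto \sum_{n=1}^N a_n y_n^T$ is Borel. Composing with the Borel map $(T,y) \mapsto \|y\|_{W_T^m}$ supplied by Lemma \ref{manyitems}(d) shows that for each $m\in\nn$ the map $T \mapsto \|\sum a_n y_n^T\|_{W_T^m}^2$ is Borel, and hence the $[0,\infty]$-valued pointwise sum $T \mapsto \sum_{m=1}^\infty \|\sum a_n y_n^T\|_{W_T^m}^2$ is Borel as the countable supremum of its nonnegative Borel partial sums.

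Finally, for each $\mathbf{a} \in \qq^{<\nn}$ the locus $\fff_{\mathbf{a}} \subset \bbb \times C(2^\nn)^\nn$ on which the two Borel functions constructed above agree is itself Borel, and
\[
\fff = \bigcap_{\mathbf{a} \in \qq^{<\nn}} \fff_{\mathbf{a}}
\]
is a countable intersection of Borel sets, hence Borel in $\bbb \times C(2^\nn)^\nn$; since $\bbb$ is assumed Borel in $\llll(X,Y)$, $\fff$ is Borel in $\llll(X,Y) \times C(2^\nn)^\nn$, as required. The mildly delicate step is the continuity-in-coefficients justification of the reduction from $c_{00}$ to $\qq^{<\nn}$; once this is in hand, every remaining step is a direct application of Lemma \ref{manyitems} together with the standard closure of the class of Borel maps under composition, countable sum, and countable supremum.
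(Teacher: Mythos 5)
Your proof is correct and follows essentially the same route as the paper: reduce to rational coefficient vectors, use Lemma \ref{manyitems}(d) for Borelness of the gauge norms, and handle the infinite sum over $m$ via its finite partial sums. The paper merely unwinds your ``equality locus of two Borel functions'' step explicitly, writing $\fff$ as a countable intersection of sets $A_{k,N,a}$ (partial sums bounded above by $\|\sum a_n x_n\|^2$) together with countable unions of sets $B_{k,p,N,a}$ (some partial sum within $1/p$ of it), and it leaves implicit the $c_{00}$-to-$\qq^{<\nn}$ reduction that you justify by continuity in the coefficients.
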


\begin{proof}

For $k,p,N\inn$ and $a=(a_1,...,a_k)\in\qq^k$, we let
\begin{equation*}
\begin{split}
A_{k,N,a}&=\bigg\{(T,(x_n)) \in \mathcal{F}\, :  \sum_{1 \leq m \leq
N} \bigg\| \sum_{n=1}^k a_n y^T_n \bigg\|_{W^m_T}^2 \leq\bigg\|
\sum_{n=1}^k
a_n x_n \bigg\|^2\bigg\} \mbox{ and }\\
B_{k,p,N,a}&=\bigg\{(T,(x_n)) \in \mathcal{F}\, :  \bigg\|
\sum_{n=1}^k a_n x_n\bigg\|^2 -\frac{1}{p} \leq \sum_{1 \leq m \leq
M } \bigg\| \sum_{n=1}^k a_n y^T_n\bigg\|^2_{W^m_T} \bigg\}.
\end{split}
\end{equation*}
The sets $A_{k,N,a},B_{k,p,N,a}\subset \bbb \times C(2^\nn)^\nn$ are
Borel as the maps $T \mapsto (y_n^T)_{n \inn}$ and $(T,y) \mapsto
\|y \|_{W^m_T}$ are Borel. By Remark \ref{basisequiv}, we have that
$\fff
=\bigcap_{k,N\inn;a\in\qq}A_{k,N,a}\cap\bigcap_{k,p\inn;a\in\qq}\bigcup_{N\inn}B_{k,p,N,a}$,
and hence $\fff$ is Borel.

%
\end{proof}

The next proposition is our main tool for proving Theorems \ref{maintheorem} and \ref{maintheoremSD}.

\begin{prop}
Suppose that $\bbb \subset \llll(X,Y)$ is a Borel collection of weakly compact operators
(resp. operators whose adjoints
have separable range), the map $\bbb
\ni T \mapsto (y_n^T)_{n \inn} \in Y^\nn$ is Borel and for each $T
\in \bbb$ and the space $Z_T$ is reflexive (resp. has separable dual) with basis $(z_n^T)$. Then
there is a reflexive space (resp. space with separable dual) with a basis $Z_\bbb$ such that each
$T\in \bbb$ factors through $Z_\bbb$. \label{general}
\end{prop}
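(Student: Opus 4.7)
The plan is to extract from $\bbb$ an analytic family of normalized basic sequences in $C(2^\nn)$ that realizes, up to equivalence, every basis $(z_n^T)$, and then apply the Argyros--Dodos amalgamation (Theorem \ref{ADseq1} in the reflexive case, Theorem \ref{ADseq2} in the Asplund case) to produce a single space $Z_\bbb$ with a basis into which each $[z_n^T]$ sits as a complemented subspace. Given such a $Z_\bbb$, the DFJP factorizations $T=T_2T_1$ from Theorem \ref{theoremitems}(a) will then lift through $Z_\bbb$ using the associated bounded projections.

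To produce the family, I would first invoke Lemma \ref{lastlemma} to see that the set $\fff\subset\bbb\times C(2^\nn)^\nn$ of pairs $(T,(x_n))$ for which $(x_n)$ is $1$-equivalent to $(z_n^T)$ is Borel. Since each $Z_T$ is separable and hence embeds isometrically into $C(2^\nn)$, the image of $(z_n^T)$ under such an embedding witnesses that every vertical slice of $\fff$ is non-empty. Projecting $\fff$ onto the second coordinate and postcomposing with the Borel normalization map $(x_n)\mapsto (x_n/\|x_n\|)$ (well-defined on basic sequences since they have no zero coordinates) yields an analytic set $\aaa\subset C(2^\nn)^\nn$ of normalized basic sequences such that, for every $T\in\bbb$, some element of $\aaa$ is equivalent to $(z_n^T)$.

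Next I would check that $\aaa$ feeds into the relevant Argyros--Dodos theorem. In the reflexive case, a basis of a reflexive space is automatically shrinking and boundedly complete, and these properties pass to equivalent bases, so $\aaa$ is an analytic collection of normalized shrinking and boundedly complete basic sequences and Theorem \ref{ADseq1} supplies a reflexive space $Z_\bbb$ with basis $(z_n)$ such that every $(x_n)\in\aaa$ is equivalent to a subsequence $(z_{k_n})$ with $[z_{k_n}]$ complemented in $Z_\bbb$. In the Asplund case the DFJP basis $(z_n^T)$ of $Z_T$ is shrinking (this is where the structure of the DFJP construction in the Asplund setting is used, together with the separability of $Z_T^\ast$), and Theorem \ref{ADseq2} produces an analogous space $Z_\bbb$ with shrinking basis and complemented subsequences.

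Finally, to factor a given $T\in\bbb$, I pick $(x_n)\in\aaa$ equivalent to $(z_n^T)$, let $\iota\colon Z_T\to [z_{k_n}]\subset Z_\bbb$ denote the resulting isomorphism and $P\colon Z_\bbb\to [z_{k_n}]$ the bounded projection, and write $T=(T_2\iota^{-1}P)\circ(\iota T_1)$, which is the desired factorization through $Z_\bbb$. The main point requiring care is confirming that the DFJP bases $(z_n^T)$ inherit the shrinking (and in the reflexive case boundedly complete) properties needed to apply the Argyros--Dodos amalgamation uniformly across $\bbb$; once these are in hand, the remainder of the argument is essentially an assembly of the descriptive-set-theoretic bookkeeping established by Lemmas \ref{manyitems} and \ref{lastlemma} together with the structural results of Section 2.
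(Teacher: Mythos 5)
Your proposal is correct and follows essentially the same route as the paper: apply Lemma \ref{lastlemma}, project the Borel set $\fff$ onto its second coordinate to obtain an analytic family of basic sequences realizing each $(z_n^T)$ up to equivalence, feed this into Theorem \ref{ADseq1} (resp.\ Theorem \ref{ADseq2}), and lift the DFJP factorization through the complemented copy of $Z_T$ in $Z_\bbb$. The extra care you take about normalization and the non-emptiness of the slices of $\fff$ is sensible bookkeeping that the paper leaves implicit, but it does not change the argument.
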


\begin{proof}
We prove the weakly compact case. The case of operators whose adjoints have
separable range is analogous. By Lemma \ref{lastlemma}, the set $$\{ (T,(x_n)) \in \bbb
\times C(2^\nn)^\nn:  (z_n^T) \mbox{ is 1-equivalent to } (x_n)\}$$
is Borel in $\llll(X,Y) \times  C(2^\nn)^\nn$.  Hence, the set
$$\zzz_\bbb=\{ (x_n) \in C(2^\nn)^\nn:\exists
T\in\bbb\textrm{ such that } (z_n^T) \mbox{ is 1-equivalent to }
(x_n)\}$$ is analytic in $C(2^\nn)^\nn$. By Theorem \ref{ADseq1} there
is a reflexive space $Z_\bbb$ such that if $(z_n^T) \in \zzz_\bbb$
the space $Z_T$ is isomorphic to a complemented subspace of $Z_\bbb$.
That is, there exists an embedding $I_T:Z_T\rightarrow Z_\bbb$ and a
bounded projection $P_T:Z_\bbb\rightarrow I_T(Z_T)$. Given the
factorization $T_1:X\rightarrow Z_T$ and $T_2:Z_T \rightarrow Y$
with $T=T_2 T_1$, we now have the factorization $I_T
T_1:X\rightarrow Z_\bbb$ and $T_2 I_T^{-1} P_T:Z_\bbb \rightarrow Y$
with $T=T_2 I_T^{-1}P_T I_T T_1$. Thus, each $T \in \bbb$ factors
through $Z_\bbb$.
\end{proof}

\begin{thm}
Suppose $Y$ has a shrinking basis and $\aaa \subset \llll(X,Y)$
is an analytic collection of weakly compact operators. Then there is a reflexive space with a basis $Z_\aaa$ such that each $T\in \aaa$ factors through $Z_\aaa$.
\end{thm}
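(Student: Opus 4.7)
The plan is to reduce to the situation of Proposition \ref{general} via a standard descriptive set theoretic separation argument. Since $Y$ has a shrinking Schauder basis, fix once and for all such a basis $(y_n)_{n\in\nn}$ of $Y$, and for every $T\in\llll(X,Y)$ simply set $y_n^T:=y_n$. Then the assignment $T \mapsto (y_n^T)_{n\in\nn}\in Y^\nn$ is constant, hence trivially Borel, so all the hypotheses about the Borelness of this map in the lemmas of Section 3 are automatic.

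Next I would take care of the reflexivity of $Z_T$. Since $(y_n)$ is shrinking for every $T$ and $T\in\aaa$ is weakly compact, Theorem \ref{theoremitems}(d) says that the set $W_T$ is weakly compact, and then Theorem \ref{theoremitems}(c) yields that $Z_T$ is reflexive. By Theorem \ref{theoremitems}(b), the sequence $(z_n^T)_{n\in\nn}$ is a basis for $Z_T$, and by Theorem \ref{theoremitems}(a) the operator $T$ factors as $T=T_2T_1$ with $T_1:X\to Z_T$ and $T_2:Z_T\to Y$.

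The one real obstacle is that Proposition \ref{general} requires a \emph{Borel} collection of operators, whereas the given $\aaa$ is only analytic. To bridge this gap I would invoke the Lusin separation theorem: by Proposition \ref{opscoanalytic}(a), the set
\[
\www=\{T\in\llll(X,Y): T \text{ is weakly compact}\}
\]
is coanalytic. Since $\aaa\subset\www$ with $\aaa$ analytic and $\www$ coanalytic, Lusin separation produces a Borel set $\bbb$ with $\aaa\subset\bbb\subset\www$. This $\bbb$ now satisfies every hypothesis of Proposition \ref{general}: it is a Borel collection of weakly compact operators, the basis-assignment map is Borel (it is constant), and for each $T\in\bbb$ the space $Z_T$ is reflexive with basis $(z_n^T)$ by the preceding paragraph.

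Finally, I would apply Proposition \ref{general} to $\bbb$ to obtain a reflexive Banach space $Z_\bbb$ with a basis such that every $T\in\bbb$ factors through $Z_\bbb$. Setting $Z_\aaa:=Z_\bbb$ and using $\aaa\subset\bbb$, every $T\in\aaa$ factors through $Z_\aaa$, which is the conclusion. The only nontrivial step is the separation of $\aaa$ from the complement of $\www$ by a Borel set, and this is a direct appeal to a classical theorem once Proposition \ref{opscoanalytic}(a) is in hand.
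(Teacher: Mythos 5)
Your proposal is correct and follows essentially the same route as the paper: Lusin separation of the analytic set $\aaa$ from the complement of the coanalytic set of weakly compact operators (Proposition \ref{opscoanalytic}(a)) to obtain a Borel set $\bbb$, the constant (hence Borel) assignment $T\mapsto(y_n)$ using the fixed shrinking basis of $Y$, Theorem \ref{theoremitems} to get that each $Z_T$ is reflexive with basis $(z_n^T)$, and finally Proposition \ref{general}. Your write-up is in fact slightly more explicit than the paper's about which parts of Theorem \ref{theoremitems} are used where, but the argument is the same.
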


\begin{proof}
By Proposition \ref{opscoanalytic} the collection of all weakly compact operators
from $X$ to $Y$ (for separable $X$ and $Y$) is coanalytic.
Using Lusin's separation theorem \cite[Theorem 28.1]{Ke} there is a Borel set $\bbb$ of
weakly compact operators such that $\aaa \subset \bbb$. Let $(y_n)$ be a
shrinking basis for $Y$. For each $T\in \bbb$, set $y_n^T=y_n$ for
each $n \inn$. Clearly, $T \mapsto (y_n^T)_{n \inn}$ is Borel, as it
is constant. Using Theorem \ref{theoremitems}, for each $T \in \bbb$
the space $Z_T$ is reflexive and has a basis $(z_n^T)_{n\inn}$. We
apply Proposition \ref{general} to finish the proof.\end{proof}

Next we prove Theorem \ref{maintheorem} and Theorem
\ref{maintheoremSD} for $Y=C(2^\nn)$.  We will use the method of
slicing and selection developed by Ghoussoub, Maurey and
Schachermayer \cite{GMS}.  This method was used to give alternate
proofs of Zippin's theorems that every reflexive separable  Banach
space embeds into a reflexive Banach space with a basis and every
 Banach space with separable dual embeds into a Banach
space with a shrinking basis.  Dodos and Ferenczi
\cite{DodosFerenczi} showed that it is possible to parametrize
this slicing and selection procedure. We will use their parametrized
selection in our proof.  They proved that given an analytic collection
$\aaa$ of separable reflexive Banach space (respectively Banach
spaces with separable dual), there exists an analytic collection
$\aaa'$ of separable reflexive Banach spaces with bases (respectively
Banach spaces with shrinking bases) such that for all $X\in\aaa$
there exists $Z\in\aaa'$ such that $X$ embeds into $Z$. Before
proceeding to the proof, we must introduce several notions involved
in the slicing and selection procedure.

Let $E$ be a compact metric space.  A map $\Delta: E \times E \to
\rr$ is a {\em fragmentation} if for every closed subset $K$ of $E$
and $\ee>0$ there exists an open subset $V$ of $E$ with $K \cap V
\not=\emptyset$ and such that $\sup\{ \Delta(x,y) : x,y \in K \cap V
\} \leqslant \ee$. Recall that $K(E)$ is the space
of all compact subsets of $E$. In \cite{GMS} they prove the
following.

\begin{thm}[\cite{GMS}]
Let $E$ be a compact metric space and $\Delta$ be a fragmentation on
$E$.  Then there is a function $s_\Delta:K(E) \to E$ called a {\em
dessert selection} satisfying the following:
\begin{itemize}
\item[(i)]  For every non-empty $K \in K(E)$, we have $s_{\Delta}(K) \in K$.
\item[(ii)] If $K \subset C$ are in $K(E)$ and $s_{\Delta}(C) \in K$, then $s_{\Delta}(K)=s_{\Delta}(C)$.
\item[(iii)] If $(K_m)$ are descending in $K(E)$ and $K=\cap_m K_m$, then
$$\lim_m \Delta (s_\Delta (K_m), s_\Delta (K))=0.$$
\end{itemize}
\label{selector}
\end{thm}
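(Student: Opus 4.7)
The plan is to construct $s_\Delta$ via a canonical iterative slicing of each compact set $K$ against a fixed countable base for $E$, driving both the metric diameter and the $\Delta$-diameter of the slices to zero. Fix a compatible metric $d$ on $E$ and a countable open base $\{U_n\}_{n\in\nn}$ with the property that for every $k\in\nn$, the subfamily $\{U_n:d\text{-diam}(U_n)\leqslant 2^{-k}\}$ is still a base. Given a nonempty $K\in K(E)$, I would inductively define a decreasing sequence of nonempty compact sets $K=K^0\supseteq K^1\supseteq\cdots$ as follows: with $K^{k-1}$ already defined, let $n_k(K)$ be the least $n$ such that $U_n$ has $d$-diameter at most $2^{-k}$, $K^{k-1}\cap U_n\neq\emptyset$, and $\sup\{\Delta(x,y):x,y\in K^{k-1}\cap U_n\}\leqslant 2^{-k}$. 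Such an $n$ exists: the fragmentation hypothesis applied to $K^{k-1}$ with $\varepsilon=2^{-k}$ produces an open set $V$ with $K^{k-1}\cap V\neq\emptyset$ of $\Delta$-diameter at most $2^{-k}$, and any basic open $U_n\subseteq V$ of $d$-diameter $\leqslant 2^{-k}$ meeting $K^{k-1}$ supplies a valid index. Set $K^k=\overline{K^{k-1}\cap U_{n_k(K)}}$; the $d$-diameters shrink to $0$, so by compactness $\bigcap_k K^k$ is a singleton and we define $s_\Delta(K)$ to be its unique element.

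Property (i) is immediate. For (ii), assuming $K\subseteq C$ and $s_\Delta(C)\in K$, I would prove by induction on $k$ that $n_k(K)=n_k(C)$ and $s_\Delta(C)\in K^k$. The inequality $n_k(K)\leqslant n_k(C)$ is automatic, since any basic slice of small $\Delta$-diameter for $C^{k-1}$ remains valid for the smaller set $K^{k-1}$. The reverse inequality is the delicate point: a strict inequality would mean that some earlier-indexed $U_{n_k(K)}$ is valid for $K^{k-1}$ but was rejected by $C^{k-1}$ because of too-large $\Delta$-diameter on the larger set, and one must show this forces $s_\Delta(C)$ out of $K^k$, contradicting the inductive hypothesis. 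To close this gap, one tunes the rule so that the chosen $U_{n_k}$ must additionally contain in its closure the selector of $K^{k-1}$ determined by the construction so far, which locks the procedures on $K$ and on $C$ onto the same sequence of indices as long as the selector of $C$ propagates into $K$. For (iii), given descending $K_m\downarrow K$ in $K(E)$, a Hausdorff-continuity argument shows that for each fixed $k$ the stages $K^j_m$ converge to $K^j$ for $j\leqslant k$, so the indices $n_j(K_m)$ eventually equal $n_j(K)$; hence $s_\Delta(K_m)$ lies in a set of $\Delta$-diameter at most $2^{-k}$ which also contains $s_\Delta(K)$, yielding $\Delta(s_\Delta(K_m),s_\Delta(K))\leqslant 2^{-k}$ for all large $m$ and the required limit.

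The main obstacle is the coherence property (ii). A greedy ``smallest valid index'' rule breaks down because passing to a smaller compact set can newly validate slices that were too coarse before; enforcing stability requires encoding into the selection rule enough information about earlier choices to prevent rerouting, without making the rule so restrictive that a valid index ceases to exist. A secondary technical point is that the definition of $n_k$ controls $\Delta$-diameter on the open slice $K^{k-1}\cap U_n$, whereas $K^k$ is its closure; this is handled either by working with upper semicontinuous fragmentations (as in the framework of \cite{GMS}) or by inner approximation of the closure. Together these two issues constitute the technical core of the Ghoussoub--Maurey--Schachermayer argument.
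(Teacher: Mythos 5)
This theorem is quoted in the paper from \cite{GMS} (see also \cite[Theorem 5.6]{DodosBook}) and used as a black box, so there is no internal proof to compare against; your argument has to stand on its own, and as written it has genuine gaps, concentrated exactly where you say the difficulty lies. The greedy ``least valid index'' rule does not satisfy (ii), and even your preliminary claim that $n_k(K)\leqslant n_k(C)$ is ``automatic'' is false: validity of an index $n$ for $C^{k-1}$ requires $C^{k-1}\cap U_n\neq\emptyset$, and nonemptiness of the slice is not inherited by the smaller set $K^{k-1}$; moreover the inductive hypothesis only places $s_\Delta(C)$ in $C^k=\overline{C^{k-1}\cap U_{n_k(C)}}$, not in the open set $U_{n_k(C)}$ itself, so you cannot conclude that $K^{k-1}$ meets $U_{n_k(C)}$. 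For the reverse inequality you correctly identify the failure mode, but the proposed repair is circular: you constrain the choice of $U_{n_k}$ by ``the selector of $K^{k-1}$ determined by the construction so far,'' yet that selector is the output of the entire infinite construction you are in the middle of defining. Even read as a nested recursion (run the full procedure on $K^{k-1}$ before choosing $n_k$), you never show that a valid index survives the extra constraint, nor that coherence actually follows from it.

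The argument for (iii) also does not go through. A fragmentation $\Delta$ carries no continuity or semicontinuity hypothesis, so for $K_m\downarrow K$ the quantities $\sup\{\Delta(x,y):x,y\in K_m\cap U_n\}$ need not converge to $\sup\{\Delta(x,y):x,y\in K\cap U_n\}$; an index that is valid for $K$ can remain invalid for every $K_m$, so $n_j(K_m)$ need not stabilize at $n_j(K)$, and neither the set map $K\mapsto K\cap U_n$ nor the least-index selection is continuous for the Hausdorff metric. The construction in \cite{GMS} and \cite[Section 5.2]{DodosBook} is structurally different: rather than keeping the \emph{first} small-$\Delta$-diameter slice, one removes such slices one after another in a transfinite exhaustion of $K$ and selects from the \emph{last} one (whence ``dessert''); it is this last-bite structure that delivers the coherence property (ii), and (iii) is then deduced from (ii) together with compactness rather than from any continuity of the selection rule. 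I would either cite the result, as the paper does, or rebuild the proof around that transfinite slicing derivative.
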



\begin{defn}
Let $Z$ be a standard Borel space.  A {\em parametrized Borel fragmentation} on $E$ is a map
$\ddd: Z \times E \times E \to \rr$ such that for each $z \in Z$, setting $\ddd_z(\cdot,\cdot):=\ddd(z,\cdot,\cdot)$
the following are satisfied.
\begin{enumerate}
\item For $z \in Z$, the map $\ddd_z:E \times E \to \rr$ is a fragmentation on $E$.
\item The map $\ddd$ is Borel.
\end{enumerate}
\label{pBf}
\end{defn}

Let $\ddd$ be a parametrized Borel fragmentation on a compact metric
space $E$ with respect to some standard Borel space $Z$.  Define
$s_\ddd:Z \times K(E) \to E$ by $s_\ddd(z,K) = s_{\ddd_z}(K)$ where
$s_{\ddd_z}$ is the dessert selection associated to the
fragmentation $\ddd_z$ and given by Theorem \ref{selector}.  We need
the following important theorem of Dodos.

\begin{thm} \cite[Theorem 5.8]{DodosBook}
Let $E$ be a compact metrizable space and $Z$ be a standard Borel
space.  Let $\ddd: Z \times E \times E \to E$ be a parametrized
Borel fragmentation.  Then the parametrized dessert selection
$s_\ddd:Z \times K(E) \to E$ associated to $\ddd$ is Borel.
\label{pds}
\end{thm}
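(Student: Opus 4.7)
The plan is to parametrize the Ghoussoub--Maurey--Schachermayer construction of the dessert selection behind Theorem~\ref{selector} by $z \in Z$ and check Borelness at every stage. Fix once and for all a countable basis $(U_n)_{n \in \nn}$ for the topology of $E$ and a countable dense sequence $(x_i)_{i \in \nn} \subset E$.

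The core step is a \emph{parametrized slicing lemma}: for every rational $\ee > 0$ the function
\[
n_\ee : Z \times \bigl(K(E) \setminus \{\emptyset\}\bigr) \to \nn,\qquad (z,K) \longmapsto \min\bigl\{\, n : K \cap U_n \neq \emptyset \text{ and } \mathrm{diam}_{\ddd_z}\bigl(\overline{K \cap U_n}\bigr) \leqslant \ee \,\bigr\},
\]
should be Borel. The condition $K \cap U_n \neq \emptyset$ is Borel in $K$ directly from the defining generators of the Effros--Borel structure on $K(E)$. The main obstacle, and the place I expect most of the work to land, is the $\ddd_z$-diameter condition: because $\ddd_z$ is only assumed Borel (not continuous), a supremum of $\ddd_z$ over a compact set does not obviously reduce to a countable supremum over $(x_i)$. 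The standard device (cf.~\cite[Chapter 5]{DodosBook}) is to approximate $\ddd$ by a countable monotone family of upper-semicontinuous Borel kernels $\ddd^{(k)}$ whose compact-set diameters \emph{are} computable from the values $\ddd^{(k)}(z,x_i,x_j)$ on the dense sequence, and then rewrite the diameter condition on $\ddd_z$ as a countable Boolean combination of Borel sets in $(z,K)$ involving these approximants.

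Once $n_\ee$ is Borel, the slice $\sg_\ee(z,K) := \overline{K \cap U_{n_\ee(z,K)}}$ is a Borel map $Z \times K(E) \to K(E)$, via Fact~\ref{closuremap} applied to a Borel enumeration of the rational neighborhoods of the selected $U_{n_\ee(z,K)}$. I would then iterate the transfinite derivation of \cite{GMS}: successively remove the first small-$\ddd_z$-diameter slice and take intersections at countable limit ordinals, each stage being Borel as a composition of Borel operations and a countable intersection in $F(E)$. A $\PB_1^1$-boundedness argument applied to the fragmentation rank of $\ddd_z$ shows that on every Borel subfamily the derivation terminates at a uniformly bounded countable ordinal, so the whole construction collapses to a countable transfinite iteration of Borel maps and yields a Borel map $(z,K) \mapsto K^\infty(z,K) \in F(E)$ picking out the final nonempty stage.

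Finally, apply the Kuratowski--Ryll-Nardzewski selection theorem \cite{KRN} to the Borel multifunction $K^\infty$ to extract a Borel selector $s_\ddd : Z \times K(E) \to E$ with $s_\ddd(z,K) \in K^\infty(z,K)$. Because $n_\ee$ is defined through the single fixed basis ordering $(U_n)$, the point $s_\ddd(z,K)$ so produced satisfies properties (i)--(iii) of Theorem~\ref{selector} for the fragmentation $\ddd_z$; by the uniqueness implicit in the dessert construction of \cite{GMS} it must coincide with $s_{\ddd_z}(K)$, so $s_\ddd$ is the parametrized dessert selection and is Borel.
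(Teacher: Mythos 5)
First, a framing point: the paper does not prove Theorem \ref{pds} at all --- it is imported as a black box from \cite[Theorem 5.8]{DodosBook} --- so there is no internal argument to compare yours against, and your proposal has to stand on its own.

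As a reconstruction it has the right overall shape (parametrize the Ghoussoub--Maurey--Schachermayer slicing derivation and check Borelness stage by stage), but it has a genuine gap at exactly the point you yourself flag as the main obstacle. For a fragmentation $\ddd_z$ that is merely Borel, the set $\{(z,K): \mathrm{diam}_{\ddd_z}(K\cap U_n)\leqslant \ee\}$ is a priori only coanalytic: its complement is the projection along the compact factor $E\times E$ of the Borel set $\{(z,K,x,y): x,y\in K\cap U_n,\ \ddd(z,x,y)>\ee\}$, whose sections over $(z,K)$ are Borel but not compact, so a Theorem \ref{slice}--type argument does not apply and the supremum does not reduce to a countable one. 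Your proposed fix --- approximating $\ddd$ by a countable monotone family of upper-semicontinuous kernels whose diameters are computable on a dense sequence --- is not a construction; no such family is exhibited, and it is not clear one exists for an arbitrary Borel $\ddd$. Since the Borelness of $n_\ee$ drives everything downstream (each derivation stage, and hence the $\PB^1_1$-boundedness step, depends on it), this is where the real content of Dodos's theorem lives and where your proof stops. Two smaller problems: your derivation removes only the \emph{first} small slice rather than the union of all small slices, so even granting Borelness you would still need to verify that the resulting selection satisfies (i)--(iii) of Theorem \ref{selector}; and the final step is wrong as stated, because a dessert selection is not unique --- an arbitrary Kuratowski--Ryll-Nardzewski selector of the terminal set $K^\infty(z,K)$ need not satisfy the coherence property (ii) or the limit property (iii), so one needs a canonical pointwise selector on $K(E)$ together with a proof that the terminal sets themselves cohere. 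It is worth noting that for the only fragmentation this paper actually feeds into Theorem \ref{pds}, namely $\ddd(T,\sigma,\tau)=\sup_n|d_n(E_T)(\sigma)-d_n(E_T)(\tau)|$, the diameter of a compact set is a countable supremum of oscillations of continuous functions and is therefore genuinely Borel in the parameter and the compact set; your argument could plausibly be completed in that special case even if the general statement remains out of reach.
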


For convenience, we restate Theorem \ref{maintheoremSD}.

\begin{thm}
Let $X$ be a separable Banach space and let $\aaa \subset \llll(X,
C(2^\nn))$ be a set of bounded operators whose adjoints have
separable range which is analytic in the strong operator topology.
Then there is a Banach space $Z$ with a shrinking basis such that
every $T \in \aaa$ factors through $Z$.
\end{thm}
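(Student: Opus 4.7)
The plan is to construct, in a Borel way on $\bbb$, a shrinking basic sequence $(y_n^T)$ in $C(2^\nn)$ whose closed span contains $T(X)$, and then feed this data into Proposition \ref{general}. Since $\aaa$ is analytic and by Proposition \ref{opscoanalytic}(b) the set of Asplund operators is coanalytic, Lusin's separation theorem produces a Borel set $\bbb$ with $\aaa \subseteq \bbb$ each of whose elements is Asplund.

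The main work is then the construction of the Borel assignment $T \mapsto (y_n^T)$. I would carry this out by parametrizing the Ghoussoub--Maurey--Schachermayer slicing and selection procedure, following the template of Dodos--Ferenczi's parametrization of Zippin's embedding theorem, but indexed by operators rather than by subspaces. To each $T \in \bbb$ one associates a weak*-compact, convex, norm-separable set in a fixed Polish ambient space (for example the image of $T^*(B_{C(2^\nn)^*})$ in $B_{\ell_\infty}$ as in the map $\Phi$ from the proof of Proposition \ref{opscoanalytic}), and fragments it by the norm. Assembling these fragmentations into a single map $\ddd : \bbb \times E \times E \to \rr$ that satisfies Definition \ref{pBf} -- i.e., is a parametrized Borel fragmentation -- lets one invoke Theorem \ref{pds} to obtain a Borel dessert selection $s_\ddd$. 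Iterating $s_\ddd$ along a countable tree of slices following the GMS template then yields, for each $T$, a sequence $(y_n^T) \in C(2^\nn)^\nn$ that is a shrinking basic sequence whose closed span contains $T(X)$, with Borelness of the whole assignment inherited from Theorem \ref{pds}.

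With the Borel map $T \mapsto (y_n^T)$ in hand, apply the DFJP construction to $T : X \to [y_n^T]$. By Theorem \ref{theoremitems}(b), $Z_T$ has a basis $(z_n^T)$. The analog of Theorem \ref{theoremitems}(d) for Asplund operators -- that shrinkingness of $(y_n^T)$ together with separability of the range of $T^*$ forces $Z_T^*$ to be separable -- then gives that $Z_T$ has separable dual. All hypotheses of the Asplund version of Proposition \ref{general} are thus met, producing a Banach space $Z$ with a shrinking basis through which every $T \in \bbb$, and in particular every $T \in \aaa$, factors.

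The hard part will be the parametrization step: verifying that the iterated slicing can be packaged as a parametrized Borel fragmentation whose dessert selection produces basic sequences $(y_n^T)$ which are simultaneously shrinking, span a subspace of $C(2^\nn)$ containing $T(X)$, and depend Borel on $T$. This is the analog for operators of what Dodos--Ferenczi carry out for subspaces, and the adaptation requires threading $T$ through the fragmentation construction rather than working with the fixed subspace $\overline{T(X)}$ alone.
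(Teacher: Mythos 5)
There is a genuine gap, and it sits exactly where you flag the ``hard part'': the intermediate goal you set for the parametrization step --- producing, for each Asplund $T$, a \emph{shrinking} basic sequence $(y_n^T)$ in $C(2^\nn)$ whose closed span contains $T(X)$ --- is impossible in general. For any infinite-dimensional separable $X$ there is a compact (hence Asplund) operator $T:X\to C(2^\nn)$ with dense range: fix a bounded biorthogonal system $(x_n,x_n^*)$ in $X$ and a dense sequence $(y_n)$ in $B_{C(2^\nn)}$ and set $Tx=\sum_n 2^{-n}x_n^*(x)y_n$. For such a $T$, any closed subspace of $C(2^\nn)$ containing $T(X)$ is all of $C(2^\nn)$, whose dual is nonseparable, so it admits no shrinking basis. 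This is not a cosmetic slip in your write-up, because the shrinkingness of $(y_n^T)$ is load-bearing: you invoke it (via your proposed analog of Theorem \ref{theoremitems}(d)) to conclude that $Z_T$ has separable dual. No amount of work on the Borel parametrization can rescue the plan as stated.

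The point of the Ghoussoub--Maurey--Schachermayer route that the paper actually follows is precisely to sidestep this obstruction. One selects a basis $(e_n^T)$ of the \emph{whole} space $C(2^\nn)$ --- necessarily non-shrinking --- but selects it using the dessert selection for a fragmentation of the Cantor set $2^\nn$ itself, not of the set $T^*(B_{C(2^\nn)^*})$ sitting in $B_{\ell_\infty}$ as you propose. Concretely, $\ddd(T,\sigma,\tau)=\sup_n|d_n(E_T)(\sigma)-d_n(E_T)(\tau)|$ is a fragmentation of $2^\nn$ exactly because $T^*$ has separable range; the selection then picks out clopen sets $V_{t_n^T}$ and one takes $e_n^T=\chi_{V_{t_n^T}}$, a monotone basis of $C(2^\nn)$ depending on $T$ in a Borel way. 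The shrinkingness appears only downstream, as a property of the induced basis $(z_n^T)$ of the DFJP interpolation space $Z_T$ built from $W_T=\overline{\bigcup_k P_k^T(E_T)}$ (this is Theorem III.1 of \cite{GMS}). Your Lusin separation step and the final appeal to Proposition \ref{general} match the paper, but the fragmentation must live on $2^\nn$ and the selected basis must be a basis of $C(2^\nn)$ itself for the machinery to produce anything; the separable-dual conclusion is then extracted in $Z_T$, not in a subspace of $C(2^\nn)$.
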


\begin{proof}
Let $A \subset  \llll(X,C(2^\nn))$ be an analytic collection of operators
whose adjoints have separable range. Using Proposition \ref{opscoanalytic}
the space of all operators whose adjoints have separable range
is coanalytic. Therefore we may apply Lusin's theorem \cite[Lemma 28.1]{Ke} to find
a Borel set $\bbb$ operators whose adjoints have
separable range such that $\aaa \subset \bbb$.

The main step in the proof is to define a parametrized Borel
fragmentation and use the associated parametrized dessert selection
to pick a basis $(y_n^T)_{n \inn}$ of $C(2^\nn)$ such that $T
\mapsto (y_n^T)_{n \inn}$ is Borel and the sequence
$(z_n^T)$ is shrinking. Once this is done we can apply Proposition \ref{general}
to finish the proof.

Define the map $\ddd: \bbb \times 2^\nn \times 2^\nn \to \rr$ by
\begin{equation*}
\ddd(T,\sigma,\tau) = \sup\{|d_n(E_T)(\sigma)- d_n(E_T)(\tau)|: n
\in \nn\}
\end{equation*}
We claim that for each $T \in \bbb$, $\ddd_T=\ddd(T,\cdot,\cdot)$ is
a fragmentation.  To see this, we will follow  the argument in
\cite{GMS}. It will be convenient to define a new operator $T_0:
X\oplus_{1}\ell_1^2\rightarrow C(2^\nn)$ by $T_0(x,a,b)=T(x)+ay_0+b
Id$ for all $(x,a,b)\in X\oplus_{1}\ell_1^2$, where $Id$ denotes the
identity function on $2^\nn$. Note that $T_0^*$ has separable range
because $T^*$ has separable range.
 As $Id\in T_0(B_{X\oplus_1\ell^2_1})$, the
following defines a metric on $C(2^\nn)$,
$$\Delta(\sigma,\tau)=\sup\{|f(\sigma)-f(\tau)|\,:\, f\in T_0(B_{X\oplus_1\ell_1^2})\}\quad\textrm{ for all }\sigma,\tau\in
C(2^\nn).
$$
As $E_T\subseteq  \overline{T_0(B_{X\oplus_1 \ell_1^2})}$, we have
that if $\Delta$ is a fragmentation then
$\ddd_T=\ddd(T,\cdot,\cdot)$ is a fragmentation. For $\sigma\in
2^\nn$, we denote $\delta_\sigma \in C(2^\nn)^*$ to be point
evaluation at $\sigma$. Thus,
$\Delta(\sigma,\tau)=\|T_0^*(\delta_\sigma)-T_0^*(\delta_\tau)\|$.
As $T_0^*$ has separable range, the metric $\Delta$ will be
separable on $2^\nn$. Given $\varepsilon>0$ and $\sigma\in 2^\nn$,
we have that the closed $\varepsilon$-ball about $\sigma$ in the
$\Delta$ metric is given by
$$B_\Delta(\sigma,\varepsilon):=\{\tau\in
2^\nn:\Delta(\sigma,\tau)\leq\varepsilon\}=\cap_{f\in
T_0(B_{X\oplus_1\ell_1^2})}\{\tau\in
2^\nn:|f(\sigma)-f(\tau)|\leq\varepsilon\}.$$ Thus,
$B_\Delta(\sigma,\varepsilon)$ is closed in the usual topology on
$2^\nn$.  Let $\varepsilon>0$ and $K\subseteq 2^\nn$ be closed. We
let $A\subset K$ be a countable subset which is dense in the
$\Delta$ metric.  Thus, $K\subseteq \cup_{\sigma\in A}
B_\Delta(\sigma,\varepsilon/2)$.  By the Baire Category Theorem,
there
 exists $\sigma\in A$ such that $B_\Delta(\sigma,\varepsilon/2)\cap K$ is not relatively nowhere dense.
 Thus, there exists a non-empty open set $V\subseteq B_\Delta(\sigma,\varepsilon/2)\cap
 K$, as $B_\Delta(\sigma,\varepsilon/2)\cap K$ is closed. We have
 that $K\cap V\neq\emptyset$ and $\sup\{ \Delta(x,y) : x,y \in K \cap V
\} \leqslant \varepsilon$.  Thus, $\Delta$ is a fragmentation.

Invoking the Borelness of the maps $(d_n)_{n \inn}$ and the map $T \to E_T$, we have that $\ddd$ is a parametrized Borel
fragmentation according to Definition \ref{pBf}.  By Theorem
\ref{pds} there is a Borel map $s : \bbb \times K(2^\nn) \to 2^\nn$ such that
$s_{T}:K(2^\nn) \to 2^\nn$ defined by $s(T,K) = s_{T}(K)$ is a dessert selection associated to the fragmentation
$\ddd_{T}$.
We will use $s_T$ to select
a basis for $C(2^\nn)$.

Define a sequence $(t_n^T)_{n=0}^\infty$ in $2^{<\nn}$ as follows:  Let $t_0^T = \emptyset$.  Let $\phi:2^{<\nn} \to \nn\cup\{0\}$
denote the unique bijection satisfying $\phi(s) < \phi(t)$ if either $|s| < |t|$, or $|s|=|t|$ and $s<_{\text{lex}} t$. Fix $n \in \nn$ and
$t = \phi^{-1}(n-1)$.  By Theorem \ref{selector} there is a unique $i_t \in \{0,1\}$ such that $t^{\con}i_t \prec s_T(V_t)$, where
$V_s:=\{ \sigma \in 2^\nn: s \prec \sigma\}$ for $s \in 2^{<\nn}$.  Set
\begin{equation}
t_n^T =t^\con j \mbox{ where } j = i_t+1 \mbox{ (mod }2)  \mbox{ and } e_n^T =\chi_{V_{t^T_n}}.
\label{basisdefn}
\end{equation}
As in (see \cite{GMS} and \cite[Claim
5.13 pg. 79]{DodosBook}) $(e^T_n)_{n=0}^\infty$ is a normalized
monotone basis  of $C(2^\nn)$. In order to apply Proposition
\ref{general}, we need the following claim.
\begin{claim}
The map $\bbb \ni T \mapsto (e_n^T)_{n=0}^\infty \in C(2^\nn)^\nn$ is Borel.
\label{Ttobasis}
\end{claim}


\begin{proof}
It is enough to show that for each $n \inn\cup\{0\}$ the map $T
\mapsto e_n^T$ is (call it $\psi$) Borel. Fix $n \inn \cup\{0\}$. If $n=0$ let
$t=\emptyset$; otherwise, let $t=\phi^{-1}(n-1)$. Let
$$B_0 = \{T \in \bbb : t^{\con}1 \prec s(T,V_{t})\} \mbox{ and } B_1= \bbb \setminus B_0.$$
Let
$$f_t: \bbb \to \bbb \times K(2^\nn) \mbox{ be defined by } f_t(T)=(T, V_t).$$
Then $B_0$
and $B_1$ are Borel since $B_0 = f_t^{-1}(s^{-1}(V_{t^{\con}1}))$ and  $B_1 = f_t^{-1}(s^{-1}(V_{t^{\con}0}))$.
By definition
\[ \psi(T) = e_n^T = \left\{ \begin{array}{ll}
               \chi_{V_{t^{\con}0}} & T \in B_0 \\
               \chi_{V_{t^{\con}1}} & T \in B_1.
                \end{array}
        \right. \]
Since $\psi^{-1}(\chi_{V_{t^{\con}0}})=B_0$ and $\psi^{-1}(\chi_{V_{t^{\con}1}})=B_1$, our claim is proved.
\end{proof}

In \cite[Theorem III.1, page 503]{GMS} or \cite[page 80]{DodosBook} they prove $(z_n^T)_{n\inn}$ is a shrinking basis
for $Z_T$. Invoking Proposition \ref{general}, the proof is complete.
\end{proof}

\begin{proof}[Proof of Theorem \ref{maintheorem}]
Now assume that $\aaa \subset \llll(X,C(2^\nn))$ is an analytic collection of weakly compact
operators. This proof follows the same outline as the proof of Theorem \ref{maintheoremSD}. Indeed, it is enough to show that $Z_T$  is reflexive.
Note that we already know $(z_n^T)$ is a shrinking basis for $Z_T$.

By Theorem \ref{theoremitems}(c) it is enough to show that $W_T$ is weakly compact. This is proved in \cite[Lemma 5.18]{DodosBook}. Let $T_2: Z_T \to C(2^\nn)$, be as in Theorem \ref{theoremitems}(b). Set
$K=T_2^{-1}(E_T)$ (note that $T_2^{-1}$ is well defined on $E_T$).  Since $E_T$ is weakly compact, $K$ is a weakly compact subset of $Z_T$. For $k \inn$ let $Q_k:Z_T \to \sspan\{z^T_n: n\leqslant k\}$ be the natural projection. Since $(z_n^T)_{n=1}^\infty$ is shrinking we may use \cite[Lemma 2]{DFJP} (also see \cite[Lemma B.10]{DodosBook}) to conclude that
$$K'= K \cup \bigcup_{k \inn} Q_k(K)$$
is weakly compact. Note that $T_2(K')$ is also weakly compact and
$$T_2(K')= E_T \cup \bigcup_{k \inn} T_2(Q_k(K)) =
E_T \cup \bigcup_{k \inn} P_k(E_T) = \overline{\bigcup_{k \inn} P_k(E_T)} = W_T.$$
This completes the proof.
\end{proof}

\begin{cor}
Suppose $Z$ is a complemented subspace of $C(2^\nn)$ and $\aaa \subset \llll(X,Z)$ be an analytic collection of weakly compact operators (resp. a collection of operators whose adjoints have separable range). Then there is a reflexive space (resp. space with separable dual) $Z_\aaa$ such that each $T \in\aaa$ factors through $Z_\aaa$.
\end{cor}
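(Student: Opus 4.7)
The plan is to reduce the statement to Theorem \ref{maintheorem} (resp.\ Theorem \ref{maintheoremSD}) by embedding everything into $C(2^\nn)$ using the complementation hypothesis. Let $J : Z \hookrightarrow C(2^\nn)$ denote the inclusion and let $P : C(2^\nn) \to Z$ be a bounded projection, so that $P J = \mathrm{Id}_Z$. Consider the map
\begin{equation*}
\Psi : \llll(X,Z) \to \llll(X,C(2^\nn)), \qquad \Psi(T) = J \circ T.
\end{equation*}
This map is continuous in the strong operator topology (since $J$ is bounded), and hence Borel. If $T \in \aaa$ is weakly compact then so is $\Psi(T)$, and if $T^*$ has separable range then $(\Psi(T))^* = T^* \circ J^*$ has separable range as well. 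Therefore $\aaa' := \Psi(\aaa)$ is an analytic subset of $\llll(X,C(2^\nn))$ consisting of weakly compact operators (resp.\ operators whose adjoints have separable range).

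Now I apply Theorem \ref{maintheorem} (resp.\ Theorem \ref{maintheoremSD}) to $\aaa' \subset \llll(X,C(2^\nn))$: there exists a reflexive Banach space (resp.\ Banach space with separable dual) $Z_\aaa$ with a basis such that every $S \in \aaa'$ factors through $Z_\aaa$. That is, for each $T \in \aaa$ there exist bounded operators $S_1^T : X \to Z_\aaa$ and $S_2^T : Z_\aaa \to C(2^\nn)$ with $J \circ T = S_2^T \circ S_1^T$.

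Applying $P$ on the left and using $PJ = \mathrm{Id}_Z$ gives
\begin{equation*}
T = P J T = (P \circ S_2^T) \circ S_1^T,
\end{equation*}
which exhibits $T$ as a factorization through $Z_\aaa$ via the operators $S_1^T : X \to Z_\aaa$ and $P \circ S_2^T : Z_\aaa \to Z$. Since this holds for every $T \in \aaa$, the space $Z_\aaa$ has the desired property. There is no genuine obstacle here; the only point to verify is that $\Psi$ preserves the two operator classes and that it is Borel (indeed continuous) in the strong operator topology, both of which are immediate.
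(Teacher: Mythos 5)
Your proof is correct and is exactly the intended argument: the paper states this corollary without proof immediately after Theorem \ref{maintheorem}, and the evident reduction is precisely your composition with the inclusion $J$ and the projection $P$, using that weak compactness and separability of the adjoint's range are preserved under composition with bounded operators and that $T\mapsto J\circ T$ is SOT-continuous. Nothing further is needed.
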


\section{Analytic collections of spaces}

In this section we present generalizations of Theorems
\ref{maintheorem} and \ref{maintheoremSD}. Our goal is to uniformly
factor sets of operators of the form $T:X\rightarrow Y$, where $X$
and $Y$ are allowed to vary. Our previous results relied on the fact
that both the set of separable Banach spaces and the set of bounded
operators between two fixed separable Banach spaces can be naturally
considered as standard Borel spaces. However, the set of operators
between separable Banach spaces which are allowed to vary is not
immediately realized as a standard Borel space.  To get around this,
we will code operators using sequences.

 Let $X,Y\in SB$ and define $\ccc_{X,Y} \subset C(2^\nn)^\nn$ by
\begin{equation*}
\begin{split}
(w_k)_{n \inn} \in \ccc_{X,Y}  \iff & w_k \in Y, \forall k \inn ~(\forall n,m,l \in \nn,~ \forall q,r \in \qq \\
& d_n(X)= q d_m(X) + p d_l(X) \implies ~ w_n=qw_m+pw_l ) \mbox{ and } \\
& (\exists  K \in \nn, ~\forall (a_i)_i \in \qq^{<\nn}~ \|\sum_i a_i w_i \| \leqslant K \| \sum_i a_i d_i(X)\|).
\end{split}
\end{equation*}
The map defined by $\ccc_{X,Y} \ni (w_k)_{k \inn} \mapsto T \in \llll(X,Y)$, where $T$ is the unique operator $Td_n(X) := w_n$ for each $n\inn$, is an isomorphism. Define $\llll \subset SB \times SB \times C(2^\nn)^\nn$ by
\begin{equation*}
(X,Y,(w_k)) \in \llll  \iff ~(w_k)_{k\inn} \in \ccc_{X,Y}.
\end{equation*}
Note that $\llll$ is a Borel subset of $SB \times SB \times C(2^\nn)^\nn$ and is thus a Standard Borel space.

\begin{prop}
The following subsets of $\llll$ are coanalytic.
\begin{equation*}
\begin{split}
\www = \{(X,Y,(w_k)) \in \llll : & \mbox{ the operator } T \in \llll(X,Y) \mbox{ defined by } \\
&  ~Td_k(X) = w_k \mbox{ for all }k \inn, \mbox{ is weakly compact}\}
\end{split}
\end{equation*}
\begin{equation*}
\begin{split}
\mathcal{SR} = \{(X,Y,(w_k)) \in \llll : & \mbox{ the adjoint of the operator } T \in \llll(X,Y) \mbox{ defined by } \\
&  ~Td_k(X) = w_k \mbox{ for all }k \inn \mbox{ has separable range}\}
\end{split}
\end{equation*}
\label{coanalyticU}
\end{prop}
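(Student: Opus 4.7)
The plan is to extend the arguments of Proposition \ref{opscoanalytic} from the fixed $X,Y$ setting to the parametrized setting where $(X,Y,(w_k))$ varies over the standard Borel space $\llll$. Both items will follow the same template as before, with the additional ingredient that the Borel selectors $d_n$ and $s_n$ provide Borel dependence on $X, Y\in SB$.

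For the weakly compact case, $\www$ coanalytic, I would use the characterization from \cite[Proposition 9]{BF-ordinal} that an operator $T:X\to Y$ fails to be weakly compact if and only if there exists a bounded sequence $(x_n)\subset X$ such that $(Tx_n)$ dominates the summing basis of $c_0$. Since every bounded sequence in $X$ is a norm limit of $\qq$-linear combinations of $(d_n(X))$ and $Td_n(X)=w_n$, this failure is witnessed by a doubly-indexed rational array $(a^{(n)}_i)\in(\qq^{<\nn})^\nn$ and a constant $C>0$ such that the $X$-combinations $\sum_i a^{(n)}_i d_i(X)$ remain uniformly bounded (a condition expressible using the Borel sequence $(d_n(X))$) while the corresponding $Y$-combinations $\sum_i a^{(n)}_i w_i$ satisfy the summing-basis domination inequality with constant $C$ (a condition on rational combinations of the $w_i$'s). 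This description is analytic in $(X,Y,(w_k))$, so $\www$ is coanalytic.

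For the separable-range-adjoint case, $\mathcal{SR}$ coanalytic, I would mirror the $\Phi$-construction used in the proof of Proposition \ref{opscoanalytic}(b). For $(X,Y,(w_k))\in\llll$, set
\begin{equation*}
K_{(X,Y,(w_k))} = \Big\{\Big(\tfrac{y^*(w_n)}{\|d_n(X)\|}\Big)_{n\inn} : y^* \in B_{Y^*}\Big\},
\end{equation*}
which, via the homeomorphism $T^*y^*\mapsto f_{T^*y^*}$ on $T^*(B_{Y^*})$ with the weak$^*$ topology, is norm-separable iff the operator $T$ coded by $(X,Y,(w_k))$ has $T^*$ with separable range. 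The set $K_{(X,Y,(w_k))}$ can be identified with a compact subset of a bounded ball of $\ell_\infty$; by partitioning $\llll$ into the Borel sets $\llll_K=\{(X,Y,(w_k))\in\llll : \|T\|\leqslant K\}$ (using the Borel constant $K\in\nn$ provided by the definition of $\ccc_{X,Y}$) and rescaling, one realizes $K_{(X,Y,(w_k))}$ inside $B_{\ell_\infty}$. The set $D\subset \llll_K\times B_{\ell_\infty}$ consisting of pairs with $f\in K_{(X,Y,(w_k))}$ is Borel via the same rational linear-consistency characterization used previously, only now the coefficients $\|d_n(X)\|$ and the points $w_n$ are Borel functions of the parameter. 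Theorem \ref{slice} then gives a Borel slice map $\Phi:\llll_K\to K(B_{\ell_\infty})$, and Theorem \ref{Sigma} together with $\mathcal{SR}\cap\llll_K=\Phi^{-1}(\Sigma)$ shows $\mathcal{SR}\cap\llll_K$ is coanalytic. Taking the countable union over $K\inn$ yields that $\mathcal{SR}$ itself is coanalytic.

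The main technical obstacle is the bookkeeping needed to land in a fixed compact Polish space (namely $B_{\ell_\infty}$) while handling the $X$-dependent normalization $\|d_n(X)\|$ and the $(X,Y,(w_k))$-dependent norm bound. This is not conceptually difficult, since the partition of $\llll$ by integer bounds on $\|T\|$ is Borel and norm-separability is invariant under the bi-Lipschitz rescalings appearing in the argument, but it is the one place where the parametrized proof requires slightly more care than the fixed $X,Y$ version.
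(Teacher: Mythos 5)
Your proposal is correct and follows essentially the same route as the paper: the weak-compactness part rests on the same summing-basis (non-)domination criterion from \cite{BF-ordinal}, with the complement exhibited as a projection along a Polish space of witnesses (you code the witnessing bounded sequences as rational arrays in $(\qq^{<\nn})^\nn$ where the paper uses subsequences $(k_i)\in[\nn]$ of $(d_n(X))$ -- an inessential difference), and the separable-range part repeats the $K_{(X,Y,(w_k))}$ / Borel slice map / reduction-to-$\Sigma$ argument of Proposition \ref{opscoanalytic}(b) verbatim. Your additional bookkeeping with the Borel partition $\llll=\bigcup_K \llll_K$ and the rescaling needed to land in $B_{\ell_\infty}$ is a harmless refinement of a normalization point that the paper silently elides.
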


\begin{proof}
In \cite{BF-ordinal} it is proved that an operator $T:X \to Y$ is weakly compact if for every bounded sequence $(x_n)$ in $B_X$ the image
$(Tx_n)$ does not dominate the summing basis of $c_0$. Let $[\nn]$ denote the set of all infinite increasing sequences in $\nn$. This gives us the following characterization of $\www$
\begin{equation*}
\begin{split}
(X,Y,(w_k)) \in \www \iff & \forall (k_i)_{i \in \nn} \in [\nn],  ~\forall n \in~\exists (a_i) \in \qq^{<\nn}, \\
&\|\sum_{i \in \nn} a_i w_{k_i}\| < \frac{1}{n} \sup_{k \inn} \bigg|\sum_{ i \geqslant k} a_i \bigg|.
\end{split}
\end{equation*}
Therefore $\www$ is coanalytic.

It remains to show that $\mathcal{SR}$ is coanalytic. The proof follows the proof of Proposition \ref{opscoanalytic} after making the following changes to accomodate the triples $(X,Y,(w_k)) \in \mathcal{SR}$.  Let
$$f_{(X,Y,(w_k)),y^*}= \bigg( \frac{y^*(w_n)}{\|d_n(X)\|}\bigg)_{n=1}^\infty \in B_{\ell_\infty}.$$
and
$$K_{(X,Y,(w_k))} = \{ f_{(X,Y,(w_k)),y^*}:  y^* \in B_{Y^*}\}.$$
Finally, define $\mathcal{D} \subset \llll \times B_{\ell_\infty}$ by
$$((X,Y,(w_k)),f) \in \mathcal{D} \iff f \in K_{(X,Y,(w_k))}.$$
As before, $\mathcal{D}$ is Borel and the map $\Phi: \mathcal{L} \to K(B_{\ell_\infty})$ defined  by $\Phi((X,Y,(w_k))= K_{(X,Y,(w_k))}$ is Borel with
$$(X,Y,(w_k)) \in \mathcal{SR} \iff \Phi((X,Y,(w_k)))= \Sigma.$$
Thus, $\mathcal{SR}$ is coanalytic.
\end{proof}

\begin{notation} In this new setting we make set the following notation.  Note that in most cases we are simply replacing $T$ by ${(X,Y,(w_k))}$.
\begin{itemize}
\item[(a)] Let $(X,Y,(w_k)) \in \llll$. Denote by $(y_n^{(X,Y,(w_k))})_{n \in \nn}$ a basis of $Y$ that depends on ${(X,Y,(w_k))}$ and for $k \inn$, let
$P^{(X,Y,(w_k))}_k:Y \to [ y_n^{(X,Y,(w_k))} : n\leqslant k]$ be the natural projection.
\item[(b)] Let $y^{(X,Y,(w_k))}_0=\sum_{n \inn} \frac{1}{2^n} y^{(X,Y,(w_k))}_n$ and
$$E_{(X,Y,(w_k))} :=\overline{co(\{w_k\}_{k \in \nn} \cup\{y^{(X,Y,(w_k))}_0\})}.$$
\item[(c)] Define
\begin{equation*}
W_{(X,Y,(w_k))} = \overline{\bigcup_{k \inn} P^{(X,Y,(w_k))}_k (E_{(X,Y,(w_k))})}.
\end{equation*}
The set $W_{(X,Y,(w_k))}$ is closed, bounded, convex and symmetric. Also, $P^{(X,Y,(w_k))}_k (W_{(X,Y,(w_k))}) \subset W_{(X,Y,(w_k))} $ for each $k \inn$.
\item[(d)] Let
\begin{equation*}
Z_{(X,Y,(w_k))}  = \{ z \in Y : \sum_{m=1}^\infty \|z \|_{W^m_{(X,Y,(w_k))}}^2 < \infty\}
\end{equation*}
\begin{equation*}
 \|z\|_{(X,Y,(w_k))} =  \bigg(\sum_{m=1}^\infty \|z \|_{W^m_{(X,Y,(w_k))}}^2\bigg)^\frac{1}{2}.
\end{equation*}
\end{itemize}
\end{notation}

The next lemma, which we state without proof, are analogous Lemmas \ref{manyitems} and \ref{lastlemma}.

\begin{lem} Let $\bbb\subset \llll$ be Borel and suppose the map $\bbb \ni {(X,Y,(w_k))} \mapsto (y_n^{(X,Y,(w_k))})_{n \inn} \in C(2^\nn)^\nn$
is Borel. Then the following hold:
\begin{itemize}
\item[(a)] The map $\bbb \ni {(X,Y,(w_k))} \mapsto y_0^{(X,Y,(w_k))} \in C(2^\nn)$ is Borel.
\item[(b)] The map $\bbb \ni {(X,Y,(w_k))} \mapsto E_{(X,Y,(w_k))} \in F(C(2^\nn))$ is Borel.
\item[(c)] The map $\bbb \ni {(X,Y,(w_k))} \mapsto W_{(X,Y,(w_k))} \in F(C(2^\nn))$ is Borel. Moreover, for each $m \inn$ the map
$\bbb \ni {(X,Y,(w_k))} \mapsto W^m_{(X,Y,(w_k))} \in F(C(2^\nn))$ is Borel.
\item[(d)] The map $\bbb \times Y \ni ({(X,Y,(w_k))},y) \mapsto \|y \|_{W^m_{(X,Y,(w_k))}}$ is Borel.
\end{itemize}
\label{manyitemsU}
\end{lem}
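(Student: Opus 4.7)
The plan is to parametrize the proof of Lemma \ref{manyitems}, obtaining the present statement by substituting the coordinate $w_n$ for the quantity $Td_n(X)$ that appeared there; note that $(X,Y,(w_k))\mapsto w_n$ is Borel simply because it is the $n$-th coordinate projection of the third component of $\llll$. Whenever the earlier argument used a fixed dense sequence in $B_Y$, I would replace it by the Borel-selected sequence $(d_n(Y))_n\subset B_Y$ fixed at the start of Section~2; since $d_n:SB\to C(2^\nn)$ is Borel for each $n$, no extra measurability issues arise from letting $Y$ vary.

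For part (a), I would write $y_0^{(X,Y,(w_k))}=\sum_n 2^{-n}y_n^{(X,Y,(w_k))}$ as the continuous image of the Borel-varying basis, exactly as in Lemma \ref{manyitems}(a). For (b), the finite rational convex combinations of the countable set $\{w_k\}_{k\in\nn}\cup\{y_0^{(X,Y,(w_k))}\}$ form a norm-dense subset of $E_{(X,Y,(w_k))}$ whose enumeration consists of Borel functions of the parameter, so Fact \ref{closuremap} yields Borelness of the map into $F(C(2^\nn))$. For (c), applying $P_k^{(X,Y,(w_k))}$ to the dense set from (b) produces a dense subset of $W_{(X,Y,(w_k))}$ whose elements remain Borel in the parameter: indeed $P_k(y_0^{(X,Y,(w_k))})=\sum_{n\leqslant k}2^{-n}y_n^{(X,Y,(w_k))}$ is a continuous function of the basis, and the Borelness of $P_k^{(X,Y,(w_k))}(w_n)$ goes exactly as in Lemma \ref{manyitems}(c). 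For $W^m=2^mW+2^{-m}B_Y$, a corresponding dense set is $\{2^m v+2^{-m}d_n(Y):v\text{ in the dense set for }W,\ n\in\nn\}$, again Borel in the parameter, and Fact \ref{closuremap} then finishes (c). Part (d) transfers verbatim from Lemma \ref{manyitems}(d): $\|y\|_{W^m_{(X,Y,(w_k))}}<r$ iff there exists $q\in\qq\cap(0,r)$ with $y\in qW^m_{(X,Y,(w_k))}$, which is a Borel condition since $qW^m_{(X,Y,(w_k))}$ is closed and depends Borelly on the parameter by (c).

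The only real subtlety, and hence where I expect the main obstacle to live, is that $Y$ now varies over $SB$, so any step that previously used a fixed dense sequence in $B_Y$ must now use a dense sequence that depends Borelly on $Y$; as described above, the Kuratowski--Ryll-Nardzewski sequence $(d_n(Y))_n$ handles this cleanly, so in fact no new difficulty arises beyond the bookkeeping needed to track the extra parameters $X$ and $Y$.
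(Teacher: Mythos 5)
Your proposal is correct and is exactly the argument the paper has in mind: the paper states Lemma \ref{manyitemsU} without proof, remarking only that it is analogous to Lemma \ref{manyitems}, and your parametrization --- substituting the coordinate projection $(X,Y,(w_k))\mapsto w_n$ for $T\mapsto Td_n(X)$ and the Borel selectors $(d_n(Y))_n$ for the fixed dense sequence in $B_Y$, then invoking Fact \ref{closuremap} for (b) and (c) and the gauge-norm characterization for (d) --- is precisely that analogy carried out. You have also correctly identified the only point where the varying range space matters, namely the need for a dense sequence in $B_Y$ depending Borel-measurably on $Y$.
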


\begin{lem}
Let $\bbb \subset \llll$ be Borel and $\bbb \ni {(X,Y,(w_k))} \mapsto (y_n^{(X,Y,(w_k))})_{n \inn} \in Y^\nn$
be a Borel map. The set
$$\mathcal{Z} = \{ ((X,Y,(w_k)),E) \in \bbb \times SB : E \mbox{ is isometric to } Z_{(X,Y,(w_k))}\}.$$
is analytic in $\llll \times SB$. \label{Boreluniversal}
\end{lem}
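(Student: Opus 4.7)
The plan is to realize $\mathcal{Z}$ as the projection of a Borel set along the Polish space $C(2^\nn)^\nn$. Recall from Theorem \ref{theoremitems}(b) that $Z_{(X,Y,(w_k))}$ comes with a natural basis $(z_n^{(X,Y,(w_k))})_{n\inn}$. A space $E \in SB$ is isometric to $Z_{(X,Y,(w_k))}$ if and only if there is a sequence $(x_n) \in C(2^\nn)^\nn$ that is $1$-equivalent to $(z_n^{(X,Y,(w_k))})$ and whose closed linear span is $E$. Using an existential quantifier over such witnessing sequences reduces the problem to verifying the Borelness of the underlying triple relation.

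First I would establish the analog of Lemma \ref{lastlemma} in the present setting, namely that
\begin{equation*}
\fff' = \{((X,Y,(w_k)),(x_n)) \in \bbb \times C(2^\nn)^\nn : (x_n) \mbox{ is } 1\text{-equivalent to } (z_n^{(X,Y,(w_k))})\}
\end{equation*}
is Borel. The argument copies the proof of Lemma \ref{lastlemma} nearly verbatim, using the natural adaptation of Remark \ref{basisequiv} (with $y_n^{(X,Y,(w_k))}$ and $\|\cdot\|_{W^m_{(X,Y,(w_k))}}$ in place of $y_n^T$ and $\|\cdot\|_{W^m_T}$) combined with Lemma \ref{manyitemsU}(d), which supplies the Borelness of $((X,Y,(w_k)),y)\mapsto \|y\|_{W^m_{(X,Y,(w_k))}}$.

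Next I would verify that
\begin{equation*}
\gggg = \{(E,(x_n)) \in SB \times C(2^\nn)^\nn : \overline{\sspan}\{x_n : n \inn\} = E\}
\end{equation*}
is Borel. Using the Kuratowski--Ryll-Nardzewski Borel selectors $s_n:F(C(2^\nn))\to C(2^\nn)$ with $(s_n(E))_{n\inn}$ dense in $E$, membership in $\gggg$ is the conjunction of two Borel conditions: (i) $x_n \in E$ for every $n$, which is Borel since $x \in E$ can be rewritten as $\forall k \in \nn \,\exists j \in \nn, \|x - s_j(E)\| < 1/k$; and (ii) for every $n, k \inn$ there exists $(a_i) \in \qq^{<\nn}$ with $\|s_n(E) - \sum_i a_i x_i\| < 1/k$, which is plainly Borel.

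Combining the two pieces, the set
\begin{equation*}
\mathcal{Z}' = \{((X,Y,(w_k)),E,(x_n))  : ((X,Y,(w_k)),(x_n)) \in \fff'~\mbox{and}~(E,(x_n)) \in \gggg\}
\end{equation*}
is a Borel subset of $\bbb \times SB \times C(2^\nn)^\nn$, and $\mathcal{Z}$ is its image under projection onto the first two coordinates. Since the projection of a Borel set along a Polish space is analytic, the result follows. The main technical obstacle is the Borelness of $\gggg$ and of the $1$-equivalence predicate when $Y$ and the codes $(w_k)$ vary simultaneously; but both reduce to standard descriptive set theory together with the already-established tools in Lemma \ref{manyitemsU}.
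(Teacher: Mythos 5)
Your proposal is correct and follows essentially the route the paper intends: Lemma \ref{Boreluniversal} is stated there as the analogue of Lemmas \ref{manyitems} and \ref{lastlemma}, and the intended argument is exactly to show the $1$-equivalence relation between $(x_n)$ and $(z_n^{(X,Y,(w_k))})$ is Borel via the gauge norms of Lemma \ref{manyitemsU}(d), intersect with the Borel ``closed span equals $E$'' relation, and project along $C(2^\nn)^\nn$ to get analyticity. Your explicit verification that $\gggg$ is Borel via the Kuratowski--Ryll-Nardzewski selectors is a welcome detail the paper leaves implicit.
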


We can now state and prove our main theorem of this section.

\begin{thm}
Set
$$\www_{C(2^\nn)}=\{ (X,Y,(w_k)) \in \www: Y \mbox{ is isomorphic to }C(2^\nn)\}$$
$$\mathcal{SR}_{C(2^\nn)}=\{ (X,Y,(w_k)) \in \mathcal{SR}: Y \mbox{ is isomorphic to }C(2^\nn)\}$$
Suppose that $\aaa$ is an analytic subset of $\www_{C(2^\nn)}$ (resp. $\mathcal{SR}_{C(2^\nn)}$). Then there is
a separable reflexive Banach space with a basis (resp. space with a shrinking basis) $Z$ such that for each $(X,Y,(w_k)) \in \aaa$
the operator $T$ defined by $Td_n(X)=w_n$ for each $n \in \nn$, factors through $Z$.
\end{thm}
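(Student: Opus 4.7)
The plan is to extend the parametrized DFJP and slicing machinery developed in the proofs of Theorems~\ref{maintheorem} and~\ref{maintheoremSD} to the standard Borel space $\llll$, and then invoke the Argyros--Dodos amalgamation (Theorems~\ref{ADseq1} and~\ref{ADseq2}) to absorb all factorizations into a single space $Z$.

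The first step is a descriptive set-theoretic reduction. By Proposition~\ref{coanalyticU}, $\www$ and $\mathcal{SR}$ are coanalytic, so Lusin's separation theorem yields a Borel $\bbb_0$ with $\aaa \subset \bbb_0 \subset \www$ (respectively $\mathcal{SR}$). Using that $\{(X,Y,(w_k)) \in \llll : Y \cong C(2^\nn)\}$ is analytic, a second Lusin argument produces a Borel $\bbb \subset \bbb_0$ all of whose $Y$-components are isomorphic to $C(2^\nn)$. On $\bbb$, a Kuratowski--Ryll-Nardzewski selection then yields a Borel family of isomorphisms $j_{(X,Y,(w_k))} : Y \to C(2^\nn)$.

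The second step is the parametrized basis selection. For each $(X,Y,(w_k)) \in \bbb$ we must Borel-select a Schauder basis $(y_n^{(X,Y,(w_k))})$ of $Y$ making the DFJP basis $(z_n^{(X,Y,(w_k))})$ of $Z_{(X,Y,(w_k))}$ shrinking, and in the weakly compact case additionally boundedly complete. Via $j_{(X,Y,(w_k))}$, this reduces to the parametrized slicing-and-selection procedure carried out in the proof of Theorem~\ref{maintheoremSD}: define a parametrized Borel fragmentation $\ddd : \bbb \times 2^\nn \times 2^\nn \to \rr$ using the auxiliary operator $T_0 : X \oplus_1 \ell_1^2 \to C(2^\nn)$ built from $j_{(X,Y,(w_k))} \circ T$, apply the parametrized dessert selection of Theorem~\ref{pds} to extract a Borel family of characteristic functions forming a basis of $C(2^\nn)$, and push these forward via $j_{(X,Y,(w_k))}^{-1}$ to obtain a basis of $Y$. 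Borel measurability proceeds as in Claim~\ref{Ttobasis}; shrinkingness and, in the weakly compact case, reflexivity of $Z_{(X,Y,(w_k))}$ follow verbatim from the arguments given in the proofs of Theorems~\ref{maintheoremSD} and~\ref{maintheorem} (the latter via Theorem~\ref{theoremitems}(d) and weak compactness of $W_{(X,Y,(w_k))}$).

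The final step is amalgamation. By Lemma~\ref{manyitemsU} and the parametrized analog of Lemma~\ref{lastlemma}, the collection of normalized sequences in $C(2^\nn)^\nn$ that are $1$-equivalent to some $(z_n^{(X,Y,(w_k))})$ with $(X,Y,(w_k)) \in \bbb$ is analytic. Applying Theorem~\ref{ADseq1} in the weakly compact case (respectively Theorem~\ref{ADseq2} in the separable adjoint range case) produces a reflexive Banach space $Z$ with a basis (resp.\ a Banach space with a shrinking basis) in which each $Z_{(X,Y,(w_k))}$ is isomorphic to a complemented subspace; composing the DFJP factorization $T = T_2 T_1$ with the embedding-and-projection through $Z$, exactly as in Proposition~\ref{general}, yields a factorization of every $T$ through $Z$. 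The main obstacle is the second step: transporting the slicing procedure from the concrete $2^\nn$-realization of $C(2^\nn)$ to the varying $Y$'s while preserving Borel measurability in the parameter, which hinges on the Borel selection of the isomorphisms $j_{(X,Y,(w_k))}$ produced in the first step.
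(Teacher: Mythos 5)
Your overall route coincides with the paper's: Lusin separation off the coanalytic sets of Proposition \ref{coanalyticU}, a parametrized fragmentation and dessert selection to make $(X,Y,(w_k)) \mapsto (y_n^{(X,Y,(w_k))})_{n\in\nn}$ Borel, the parametrized DFJP lemmas (Lemmas \ref{manyitemsU} and \ref{Boreluniversal}), and the Argyros--Dodos amalgamation exactly as in Proposition \ref{general}. The paper itself only sketches this by reference to the proof of Theorem \ref{maintheoremSD}, and the place where you try to supply the missing detail --- making the identification of the varying $Y$ with $C(2^\nn)$ Borel in the parameter --- is where your argument has two concrete problems.

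First, Lusin separation interpolates a Borel set between an analytic set and a \emph{coanalytic} superset; you cannot run ``a second Lusin argument'' to push $\bbb$ inside $\{(X,Y,(w_k)) : Y \cong C(2^\nn)\}$ if, as you say, that set is merely analytic. Second, and more seriously, the Kuratowski--Ryll-Nardzewski theorem selects points from a Borel assignment of nonempty \emph{closed} subsets of a fixed Polish space; it does not produce a Borel assignment $(X,Y,(w_k)) \mapsto j_{(X,Y,(w_k))}$ of surjective isomorphisms $Y \to C(2^\nn)$. The relevant set of isomorphism codes is a Borel set with nonempty sections, and such sets need not admit Borel uniformizations (Jankov--von Neumann gives only a universally measurable selection, and no standard hypothesis such as $\sigma$-compact or nonmeager sections is verified here). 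Since $j_{(X,Y,(w_k))}$ enters the definition of $E$, $W$ and hence of $Z_{(X,Y,(w_k))}$, this is not a removable convenience: without it the analyticity of the resulting family of interpolation spaces is not established. Both problems can be sidestepped by not selecting at all: the set of pairs consisting of a triple in $\aaa$ together with a code for an isomorphism $j:Y\to C(2^\nn)$ is Borel over the analytic $\aaa$, so the set of triples $(X, C(2^\nn), (j w_k))$ is analytic and has codomain literally $C(2^\nn)$; one then separates this set inside $\www$ (resp.\ $\mathcal{SR}$), runs the machinery of Theorem \ref{maintheoremSD} verbatim, and undoes $j$ on the second factor of the factorization of $jT$ to land back in $Y$. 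Your second and third steps are otherwise the paper's argument.
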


\begin{proof}
We will sketch the proof for $\www_{C(2^\nn)}$, the proof in the case of $\mathcal{SR}_{C(2^\nn)}$ is analogous.
Let $\aaa \subset \www_{C(2^\nn)}$ be analytic. Proposition \ref{coanalyticU} and Lusin's theorem \cite[Lemma 18.1]{Ke} together
tell us that $\www_{C(2^\nn)}$ is coanalytic and that there is a Borel subset $\bbb$ of $\www_{C(2^\nn)}$ such that $\aaa \subset \bbb$. The goal is is apply Lemma \ref{Boreluniversal}. Following along the same route we tracked out in the proof of
Theorem \ref{maintheoremSD} we can find for each $(X,Y,(w_k)) \in \www_{C(2^\nn)}$ a basis $(e^{(X,Y,(w_k))}_n)_{n \inn}$ of $C(2^\nn)$ such that the map $(X,Y,(w_k)) \mapsto (e^{(X,Y,(w_k))}_n)_{n \inn}$ is Borel, as desired by Lemma \ref{Boreluniversal}. Again, using the same argument, we claim that for $(X,Y,(w_k)) \in \www_{C(2^\nn)}$ the space $Z_{(X,Y,(w_k))}$ (defined above) is reflexive with a basis. Applying Lemma \ref{Boreluniversal} yields that
$$\mathcal{\mathcal{Z}_\bbb} = \{ Z\in  \bbb : \exists(X,Y,(w_k)) \in \bbb, Z_{(X,Y,(w_k))}=Z\}.$$
is analytic. Therefore, using the same procedure as in the proof of Proposition \ref{general} we obtain a reflexive space $Z_\bbb$ such that every operator $T$ coded by a triple $(X,Y,(w_k)) \in \bbb$ factors through $Z_\bbb$.
\end{proof}

\section{Applications}

In this section we provide several consequences of our uniform factorization results.
In \cite{BF-ordinal} several examples are
given of Banach spaces $X$ and $Y$ such that the space of
weakly compact operators from $X$ to $Y$ is
coanalytic but not analytic. For example, let $U$  be the separable Banach space of Pe{\l}czy{\'n}ski which
contains complemented copies of every Banach space with a basis.
It is shown in \cite{BF-ordinal} that the set of weakly compact operators on $U$
is coanalytic but not Borel.
In terms of factorization, we have the following.

\begin{prop}\label{P27}
There does not exist a separable reflexive space $Z$ such that every
weakly compact operator from $U$ to $C(2^\nn)$ factors through $Z$.
In particular, the set of weakly compact operators from $U$ to
$C(2^\nn)$ is not analytic.
\end{prop}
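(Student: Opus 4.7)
The plan is a contradiction argument: I would suppose such a separable reflexive $Z$ existed, leverage the complemented universality of $U$ to deduce that every separable reflexive Banach space embeds into $Z$, and then invoke Szlenk's theorem \cite{Sz} cited in the introduction. The second assertion would then follow immediately from Theorem \ref{maintheorem}.

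For the first step, let $X$ be an arbitrary separable reflexive Banach space with a basis. Since $U$ contains complemented copies of every Banach space with a basis, I would fix a bounded projection $P : U \to X$ (identifying $X$ with its complemented copy in $U$) and an isometric embedding $J : X \hookrightarrow C(2^\nn)$. The composition $T := J \circ P : U \to C(2^\nn)$ is weakly compact because $P(B_U)$ is bounded in the reflexive space $X$, so its image under the continuous operator $J$ has weakly compact closure in $C(2^\nn)$. By the hypothesized universal factorization, $T = A \circ B$ for some $B : U \to Z$ and $A : Z \to C(2^\nn)$. Restricting to $X \subseteq U$ gives $J = A \circ B|_X$, and from $\|x\| = \|Jx\| \leqslant \|A\| \cdot \|B|_X x\|$ it follows that $B|_X$ is bounded below. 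Thus $X$ embeds isomorphically into $Z$.

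At this point I would invoke Zippin's embedding theorem: every separable reflexive Banach space embeds into a separable reflexive Banach space with a basis. Chaining this with the previous paragraph shows that every separable reflexive Banach space embeds into the single separable reflexive space $Z$, directly contradicting Szlenk's theorem. Hence no such $Z$ can exist. For the ``in particular'' clause I would argue by contrapositive: if the set $\aaa$ of weakly compact operators from $U$ to $C(2^\nn)$ were analytic in the strong operator topology, Theorem \ref{maintheorem} applied with $Y = C(2^\nn)$ would furnish a reflexive Banach space with a basis through which every $T \in \aaa$ factors, contradicting what was just established.

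I do not foresee a genuine obstacle in this argument; each ingredient is either given in the paper or a short verification. The only moment that requires care is confirming that the extracted factor $B|_X$ is bounded below, and this is immediate from the inequality above combined with $J$ being an isometric embedding. The conceptual content of the proposition is therefore just the observation that the uniform factorization theorem \ref{maintheorem}, together with the complemented universality of $U$, converts the universal embedding obstruction of Szlenk into a descriptive-set-theoretic lower bound on the complexity of $\textrm{weakly compact}(U, C(2^\nn))$.
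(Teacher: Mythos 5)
Your proof is correct and follows essentially the same route as the paper: both exploit the complemented universality of $U$ to turn a hypothetical factorization space $Z$ into a universal space for separable reflexive spaces, then contradict Szlenk's theorem. The only cosmetic difference is that the paper runs the argument on the specific family of Tsirelson spaces $X_\xi$ and contradicts countability of the Szlenk index, whereas you treat an arbitrary reflexive space with a basis and add Zippin's theorem to remove the basis assumption; both are valid instances of the same idea.
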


\begin{proof}
Let $\xi$ be a countable ordinal and let $X_\xi$ be the Tsirelson
space of order $\xi$.  For our purposes we just need that $X_\xi$ is
a reflexive Banach space with a basis and has Szlenk index
$\omega^{\xi\omega}$ \cite{OSZ1}.  We consider $X_\xi$ as a
complemented subspace of $U$ and let $P_\xi:U\to X_\xi$ be a bounded
projection from $U$ onto $X_\xi$. Let  $i_\xi: X_\xi \to C(2^\nn)$
be an embedding of $X_\xi$. The operator $i_\xi$ is weakly compact
as $ X_\xi$ is reflexive, and hence the operator $T_\xi :=
i_\xi\circ P_\xi $ is weakly compact. If there was a Banach space $Z$ with separable dual
such that for all countable ordinals $\xi$ the operator
$T_\xi$ factored through $Z$, then, since $i_\xi$ is an isometry,
$Z$ would contain an isomorphic copy of $X_\xi$ for all countable
ordinals $\xi$.  This would imply that the Szlenk index of $Z$ is
uncountable which contradicts that $Z$ has separable dual \cite{Sz}.
\end{proof}

\begin{prop}\label{P28}
There exists a Banach space $Y$ with a shrinking basis such that
there does not exist a separable reflexive Banach space $Z$ so that
every weakly compact operator on $Y$ factors through $Z$.
In particular, the set of weakly compact operators on $Y$
is not analytic.
\end{prop}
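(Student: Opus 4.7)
The ``in particular'' clause follows from the main assertion. Indeed, if there existed a separable reflexive $Z$ such that every weakly compact operator on $Y$ factored through $Z$, then
$$
\www(Y,Y) = \{T \in \llll(Y,Y) : \exists (T_1,T_2) \in \llll(Y,Z) \times \llll(Z,Y) \text{ with } T = T_2 T_1\}.
$$
The inclusion $\supseteq$ holds because any composition through a reflexive space is weakly compact by Alaoglu's theorem (as noted in the introduction), and the assumed uniform factorization supplies $\subseteq$. The right-hand side is the image of a Borel subset of the standard Borel space $\llll(Y,Z) \times \llll(Z,Y)$ under the Borel composition map, hence is analytic. So it suffices to exhibit a Banach space $Y$ with a shrinking basis for which $\www(Y,Y)$ is not analytic; by Proposition \ref{opscoanalytic}(a) this amounts to $\www(Y,Y)$ being coanalytic but not Borel.

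To produce such $Y$, I would invoke the ordinal rank construction of \cite{BF-ordinal}, which assigns to each $T \in \llll(Y,Y)$ an ordinal $\rho(T)$ satisfying $\rho(T) < \omega_1$ iff $T$ is weakly compact, and whose sub-level sets $\{T : \rho(T) \leq \alpha\}$ are Borel for every countable $\alpha$. By the boundedness principle for coanalytic ranks, $\www(Y,Y)$ is Borel precisely when $\rho$ is bounded on it by some countable ordinal. Hence the task reduces to locating $Y$ with a shrinking basis on which $\rho$ is unbounded on $\www(Y,Y)$.

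For the construction of $Y$, I would adapt the universal-space example of \cite{BF-ordinal}, in which the unboundedness of $\rho$ on $\www(U,U)$ is extracted from the fact that Pe{\l}czy{\'n}ski's space $U$ contains complemented copies of every Banach space with a basis. Since $U$ itself does not have a shrinking basis, the adaptation replaces $U$ by an Argyros--Dodos amalgamation (Theorem \ref{ADcomp}) of a carefully chosen analytic family of shrinking-basis spaces rich enough that, for each countable ordinal $\alpha$, one can still manufacture a witnessing weakly compact $T_\alpha \in \www(Y,Y)$ with $\rho(T_\alpha) \geq \alpha$. I expect the main obstacle to be reconciling the shrinking-basis constraint $\Sz(Y) < \omega_1$, which forces a countable bound on the Szlenk index of every subspace occurring as an image of a weakly compact operator on $Y$, with the unboundedness of $\rho$ on $\www(Y,Y)$. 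The resolution is that $\rho$ is not a Szlenk-type depth invariant but rather counts the stages of an iterative procedure extracting weakly convergent subsequences from $T(B_Y)$; this rank can be made to take arbitrarily large countable values inside a fixed space of bounded Szlenk index, which is exactly what the shrinking-basis amalgamation must accomplish. Once unboundedness of $\rho$ is verified, the boundedness principle yields that $\www(Y,Y)$ is not analytic, and the first paragraph then gives the non-existence of a uniformly factoring separable reflexive $Z$.
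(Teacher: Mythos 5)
Your opening reduction is fine and is, in fact, a slightly more self-contained route to the ``in particular'' clause than the paper's (the paper instead invokes Theorem \ref{maintheorem}: if $\www(Y,Y)$ were analytic, the main factorization theorem would produce the reflexive $Z$ that the first assertion rules out). But note that you have inverted the logical order of the two claims relative to the paper: the paper proves the \emph{first} assertion directly and deduces non-analyticity, whereas you propose to prove non-analyticity first and deduce the first assertion. That inversion would be harmless if you actually proved non-analyticity — but you do not. Everything from ``To produce such $Y$'' onward is a plan, not a proof: you never specify the analytic family to be amalgamated, never construct the witnessing operators, never verify that the rank of \cite{BF-ordinal} has Borel sub-level sets on $\llll(Y,Y)$ for your $Y$, and never carry out the unboundedness argument. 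The sentence ``this rank can be made to take arbitrarily large countable values inside a fixed space of bounded Szlenk index, which is exactly what the shrinking-basis amalgamation must accomplish'' is precisely the content that needs proof, and it is asserted rather than established. As written, the core of the proposition is missing.

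The paper's argument is both complete and more elementary, and you should compare it with your sketch. Take $\aaa_{\omega^\omega}$ to be the (analytic, by Bossard) class of spaces with shrinking bases and Szlenk index at most $\omega^\omega$, and let $Y$ be the Argyros--Dodos amalgam (Theorem \ref{ADcomp}) containing every member of $\aaa_{\omega^\omega}$ complementably. For each countable $\xi$, the dual $X_\xi^*$ of the Tsirelson space of order $\xi$ is reflexive with $\Sz(X_\xi^*)\leq\omega^\omega$ while $\Sz(X_\xi)=\omega^{\xi\omega}$; so $X_\xi^*$ sits complementably in $Y$, and $T_\xi=i_\xi\circ P_\xi$ (projection onto that copy followed by inclusion) is weakly compact. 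If all $T_\xi$ factored through a separable reflexive $Z$, then $Z$ would contain every $X_\xi^*$, hence $Z^*$ would have every $X_\xi$ as a quotient, forcing $\Sz(Z^*)$ to be uncountable and contradicting reflexivity. No coanalytic rank or boundedness principle is needed. If you want to salvage your route, you would still have to produce essentially this same transfinite family $(T_\xi)$ to witness unboundedness of the rank, so the descriptive-set-theoretic detour buys you nothing here.
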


\begin{proof}
Consider the collection $\aaa_{\omega^\omega}$ of all separable Banach spaces with shrinking bases
and Szlenk index less than or equal to $\omega^\omega$. It is shown in \cite{BosPHD} that $\aaa_{\omega^\omega}$ is an analytic subset of $SB$. Using Theorem \ref{ADcomp} there is a Banach space $Y$ with
a shrinking basis such that for each $X$ in $\aaa_{\omega^\omega}$ there is a complemented subspace of $Y$ isomorphic to $X$.  Now let $\xi$ be a countable ordinal and let $X_\xi$ be the Tsirelson
space of order $\xi$.  For this proof, we just need that $X_\xi$ is
a reflexive Banach space with a basis and, Szlenk index
$\omega^{\xi\omega}$ and that $X_\xi^*$ has Szlenk index at most $\omega^\omega$ \cite{OSZ1}.   Thus $X_\xi^*$ is isomorphic to a complemented subspace of $Y$.
 We consider $X_\xi^*$ as a
complemented subspace of $Y$ and let $P_\xi:Y\to X_\xi^*$ be a bounded
projection from $Y$ onto $X_\xi^*$. Let  $i_\xi: X_\xi^* \to Y$
be the identity on $X_\xi^*$. The operator $i_\xi$ is weakly compact
as $X_\xi$ is reflexive, and hence the operator $T_\xi :=
i_\xi\circ P_\xi $ is weakly compact. If there was a reflexive Banach space $Z$
 such that for all countable ordinals $\xi$ the operator
$T_\xi$ factored through $Z$, then, since $i_\xi$ is an isometry,
$Z$ would contain an isomorphic copy of $X_\xi^*$ for all countable
ordinals $\xi$.  Thus, $X_\xi$ would be a quotient of   $Z^*$  for all countable
ordinals $\xi$.  This would imply that the Szlenk index of $Z^*$ is
uncountable which contradicts that $Z$ is reflexive \cite{Sz}.
\end{proof}

In contrast to the negative results of Proposition \ref{P27} and
Proposition \ref{P28}, we have the following theorem.

\begin{thm}\label{T29}
Let $X$ be a Banach space with a shrinking basis such that $X^{**}$
is separable.  The set  of weakly compact operators on $X$ is a
Borel subset of $\llll(X)$.  In particular, there exists a reflexive
Banach space $Z$ such that every weakly compact operator on $X$
factors through $Z$.
\end{thm}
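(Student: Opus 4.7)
The plan is to establish that the set of weakly compact operators on $X$, as a subset of $\llll(X)$, is Borel. Once this is shown, in particular this set is analytic, and the existence of the reflexive Banach space $Z$ through which all such operators factor follows from Theorem~\ref{maintheorem} applied with $Y = X$, which has a shrinking basis by hypothesis.

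To prove Borelness, I will use the classical characterization: $T \in \llll(X)$ is weakly compact iff $T^{**}(X^{**}) \subseteq J(X)$, where $J : X \hookrightarrow X^{**}$ is the canonical embedding. Since $X$ has a shrinking basis, $X^*$ is separable; I fix countable norm-dense sequences $(x_n^*)_{n \in \nn}$ in $B_{X^*}$ and $(d_j)_{j \in \nn}$ in $X$. Since $X^{**}$ is separable by hypothesis, I also fix a norm-dense sequence $(z_k)_{k \in \nn}$ in $X^{**}$. Because $T^{**}$ is norm-continuous and $J(X)$ is norm-closed in $X^{**}$, the condition $T^{**}(X^{**}) \subseteq J(X)$ is equivalent to $T^{**}(z_k) \in J(X)$ for every $k \in \nn$.

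The key step is to rewrite each condition $T^{**}(z_k) \in J(X)$ in Borel form. Using that $J(X)$ is norm-closed and
\begin{equation*}
\|T^{**}(z_k) - J(y)\|_{X^{**}} = \sup_{n \in \nn} |z_k(T^* x_n^*) - x_n^*(y)|
\end{equation*}
by norm-density of $(x_n^*)$ in $B_{X^*}$, a routine $\varepsilon$-perturbation using $(d_j)$ yields
\begin{equation*}
T^{**}(z_k) \in J(X) \iff \forall \varepsilon \in \qq^+ \; \exists j \in \nn \; \forall n \in \nn \;\; |z_k(T^* x_n^*) - x_n^*(d_j)| < \varepsilon.
\end{equation*}
For each fixed $k, n$ the map $T \mapsto T^* x_n^* = x_n^* \circ T$ is continuous from $\llll(X)$ with the strong operator topology into $X^*$ with the weak*-topology, and $z_k$ is Borel on $X^*$ because on the separable Banach space $X^*$ the norm- and weak*-Borel $\sigma$-algebras coincide. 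Hence $T \mapsto z_k(T^* x_n^*)$ is Borel, the displayed condition is Borel in $T$, and the set of weakly compact operators is the countable intersection over $k$ of such Borel sets.

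The main technical obstacle is the $\varepsilon$-perturbation that replaces the existential quantifier over $y \in X$ by one over the countable set $\{d_j\}_j$: the estimate $|x_n^*(y - d_j)| \le \|y - d_j\|$ holds uniformly in $n$ because $\|x_n^*\| \le 1$, which justifies the reduction. Both separability hypotheses, on $X^*$ (from the shrinking basis) and on $X^{**}$ (by assumption), are essential for obtaining the countable dense sequences that make the condition Borel.
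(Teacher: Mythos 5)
Your proof is correct, and it shares its skeleton with the paper's: both arguments invoke Gantmacher's theorem to reduce weak compactness of $T$ to the condition $T^{**}(X^{**})\subseteq X$, pass to a countable norm-dense subset of $X^{**}$ using norm-continuity of $T^{**}$ and norm-closedness of $X$ in $X^{**}$, and then conclude the factorization from Theorem \ref{maintheorem} with $Y=X$. The difference lies in how the condition ``$T^{**}f\in X$'' is rendered Borel. The paper uses the shrinking basis twice: first to characterize $T^{**}f\in X$ as norm-convergence of the partial sums $\sum_{k=M}^{N}x_k^*(T^{**}f)x_k$, and then to compute each coefficient as $x_k^*(T^{**}f)=\lim_n x_k^*\bigl(\sum_{i=1}^n x_i^*(f)Tx_i\bigr)$, so that the whole condition becomes an iterated limit of quantities that are manifestly SOT-continuous in $T$. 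You instead express $T^{**}z_k\in J(X)$ as $\mathrm{dist}(T^{**}z_k,J(X))=0$, evaluate the distance along dense sequences in $B_{X^*}$ and in $X$, and obtain measurability of $T\mapsto z_k(T^*x_n^*)$ from the SOT-to-weak$^*$ continuity of $T\mapsto T^*x_n^*$ together with the coincidence of the norm- and weak$^*$-Borel $\sigma$-algebras on the separable space $X^*$. Your route has the mild advantage that the Borelness of the set of weakly compact operators uses only separability of $X^{**}$ (the shrinking basis enters solely to apply Theorem \ref{maintheorem}); the paper's route avoids the abstract comparison of Borel structures and displays the set concretely as a countable intersection of sets defined by iterated limits of SOT-continuous functions. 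Both are complete; a reader of your version would want the one-line justification, which you essentially give, that the supremum over $B_{X^*}$ may be replaced by the supremum over $(x_n^*)$ because $x^*\mapsto z_k(T^*x^*)-x^*(y)$ is norm-continuous and bounded by $\|T^{**}z_k-Jy\|$ there.
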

\begin{proof}
Let $(x_k)_{k=1}^\infty$ be a shrinking basis for $X$ with
biorthogonal functionals $(x_k^*)_{k=1}^\infty$, and let $D\subset
X^{**}$ be dense. We denote the set of weakly compact operators on
$X$ by $\www(X)$. By Gantmacher's Theorem, an operator
$T\in\llll(X)$ is weakly compact if and only if
$T^{**}(X^{**})\subseteq X$.  In particular,
\begin{equation}\label{Eapp1}
T\in\www(X)\Leftrightarrow T^{**}f\in X\,\forall f\in D.
\end{equation}
  Since
$(x_k)_{k=1}^\infty$ is a $w^*-$basis for $X^{**}$,
\begin{equation}\label{Eapp2}
f=w^*-\lim_{n\rightarrow\infty}\sum_{i=1}^n x_i^*(f)x_i\quad\textrm{
for all } f\in X^{**}.
\end{equation}
Thus, we have for all $f\in X^{**}$ that
\begin{equation}\label{Eapp3}
T^{**}f\in X \Leftrightarrow T^{**}f=
\|\cdot\|-\lim_{n\rightarrow\infty} \sum_{k=1}^n x_k^*(T^{**}f)x_k
\Leftrightarrow \lim_{M\rightarrow\infty}\lim_{N\rightarrow\infty}
\left\|\sum_{k=M}^N x_k^*(T^{**}f)x_k\right\|=0
\end{equation}

Note that $T^{**}(x)=T(x)$ for all $x\in X$.  Since $T^{**}$ is
$w^*$ to $w^*$ continuous,
$$T^{**}f=w^*-\lim_{n\rightarrow\infty}\sum_{i=1}^n x_i^*(f)T(x_i)\quad\textrm{ for
all }f\in X^{**}.
$$
Hence, for all $k\in\nn$, we have  that
$$x^*_k(T^{**}f)x_k=\lim_{n\rightarrow\infty}x^*_k\left(\sum_{i=1}^n x_i^*(f)T(x_i)\right)x_k.$$
Substituting into (\ref{Eapp3}) gives,
$$ T^{**}f\in X
\Leftrightarrow \lim_{M\rightarrow\infty}\lim_{N\rightarrow\infty}
\lim_{n\rightarrow\infty}\left\|\sum_{k=M}^N x^*_k\left(\sum_{i=1}^n
x_i^*(f)T(x_i)\right)x_k\right\|=0
$$

Thus, $\www(X)$ is Borel by \ref{Eapp1}.

\end{proof}

Let $J$ be the quasi-reflexive space of James \cite{JamesSpacePNAS}.
Laustsen \cite{Laustsen} proved the following result by constructing
the required space. As $J$ has a shrinking basis and $J^{**}$ is
separable, we obtain it as a corollary of Theorem \ref{T29}.

\begin{prop}
There is a reflexive space $Z$ such that every weakly compact
operator on $J$ factors through $Z$.
\end{prop}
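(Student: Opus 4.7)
The plan is to apply Theorem \ref{T29} with $X=J$. Since the conclusion of Theorem \ref{T29} is exactly the statement of the proposition, the proof reduces to verifying its two hypotheses: that $J$ has a shrinking basis and that $J^{**}$ is separable.

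For the first hypothesis, the natural unit vector basis of James's space is the classical example of a shrinking basis that is not boundedly complete; it must fail to be boundedly complete, since a basis which is both shrinking and boundedly complete forces reflexivity, contradicting quasi-reflexivity of $J$. For the second hypothesis, quasi-reflexivity of order one means $\dim(J^{**}/J)=1$, so $J^{**}$ is the direct sum of the separable space $J$ with a one-dimensional complement, and is therefore itself separable.

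With both hypotheses verified, Theorem \ref{T29} immediately produces the desired reflexive space $Z$. Under the hood, Theorem \ref{T29} first shows that $\www(J)$ is a Borel (in particular analytic) subset of $\llll(J)$, and then invokes Theorem \ref{maintheorem} with $X=Y=J$ — whose hypotheses are met precisely because $J$ has a shrinking basis — to extract the factoring space. There is no real obstacle in the present argument: all of the mathematical content lies in Theorem \ref{T29} and in the uniform factorization machinery, and the present statement is a direct specialization to $X=J$.
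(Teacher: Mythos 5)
Your proof is correct and follows the paper's own route exactly: the paper derives this proposition as an immediate corollary of Theorem \ref{T29}, citing precisely the two facts you verify, namely that $J$ has a shrinking basis and that $J^{**}$ is separable (the latter because $J$ is quasi-reflexive). Your added justifications of these standard facts about James's space are accurate, so nothing is missing.
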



In \cite{Lindenstrauss}, Lindenstrauss showed that for each separable Banach space
$X$ there is a separable Banach space $Y$ such that $Y^{**}/Y$ is isomorphic to $X$. In particular, the space $Y$ has separable bidual. Therefore, Theorem \ref{T29} yields that whenever $Y$ has a basis, every weakly compact operator on $Y$ factors through a single reflexive space.

\begin{prop}
Let $X$ and $Y$ be separable Banach spaces. Then every closed norm-separable set $S$ of weakly compact operators is Borel in the strong operator topology.
\label{cpt}
\end{prop}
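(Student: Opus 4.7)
The plan is to exploit the norm-separability of $S$ to express membership in $S$ as a countable Boolean combination of SOT-Borel conditions. First I would fix a countable norm-dense subset $(T_n)_{n\inn}$ of $S$ (using norm-separability) together with a norm-dense sequence $(x_k)_{k\inn}$ in $B_X$ (using separability of $X$). Because $S$ is (norm-)closed,
\[
T \in S \iff \inf_{n \inn} \|T - T_n\| = 0 \iff \forall m \inn\ \exists n \inn:\ \|T - T_n\| \leqslant \tfrac{1}{m}.
\]

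Next I would convert the operator norm into a countable supremum. Since $(x_k)$ is dense in $B_X$ and every $T \in \llll(X,Y)$ is continuous, one has $\|T - T_n\| = \sup_{k \inn} \|Tx_k - T_n x_k\|$, and so the condition $\|T - T_n\|\leqslant 1/m$ is equivalent to the countable condition $\forall k \inn:\ \|Tx_k - T_n x_k\| \leqslant 1/m$. Combining,
\[
T \in S \iff \forall m \inn\ \exists n \inn\ \forall k \inn:\ \|Tx_k - T_n x_k\| \leqslant \tfrac{1}{m}.
\]
Each evaluation $T \mapsto Tx_k$ is by definition continuous in the strong operator topology, and the norm on $Y$ is continuous; hence each atomic set $\{T \in \llll(X,Y) : \|Tx_k - T_n x_k\| \leqslant 1/m\}$ is SOT-closed, in particular Borel. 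A countable intersection over $k$, countable union over $n$, and countable intersection over $m$ of Borel sets is again Borel, so $S$ is Borel in the strong operator topology.

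There is no serious obstacle here; the only point worth flagging is that one really needs $S$ to be norm-closed (not merely SOT-closed, which would make the claim trivial) in order for the characterization $T \in S \iff \inf_n \|T-T_n\|=0$ to hold. Once this is in place, the entire reduction rests on the elementary fact that the norm $\|T - T_n\|$ of a bounded operator is determined by the countably many SOT-continuous quantities $\|T x_k - T_n x_k\|$. The weak-compactness hypothesis on the elements of $S$ plays no role in the argument and is present only to situate the proposition alongside the earlier results of the paper.
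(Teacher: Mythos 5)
Your proof is correct and follows essentially the same route as the paper: both fix a norm-dense sequence $(T_n)$ in $S$ and a dense sequence in $B_X$, characterize membership in $S$ by the countable condition $\forall m\,\exists n\,\forall k:\ \|(T-T_n)x_k\|\leqslant 1/m$ (using norm-closedness of $S$), and observe that each atomic set is SOT-closed. The only cosmetic difference is that the paper uses a strict inequality $<1/m$, which changes nothing.
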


\begin{proof}
 Let $(T_k)_{k \inn}$ be a dense subset of $S$ and $(d_k)_{k \inn}$ be dense in $B_X$.
Then
\begin{equation*}
\begin{split}
T \in S
 & \iff \forall m\inn,~ \exists k \in \nn \mbox{ such that } \forall j \inn,~ \|(T - T_k)d_j \| < \frac{1}{m}.
\end{split}
\end{equation*}
From this characterization it follows that $S$ is Borel.
\end{proof}

One corollary of Proposition \ref{cpt} is that if $X^*$ or $Y$
has the approximation property, then the set of compact operator from $X$ to
$Y$ is Borel.
In \cite{JohnsonCompact} Johnson proved that there is a space $Z_K$
such that every operator which is the uniform limit of finite rank
operators (independent of the spaces $X$ and $Y$) factors through
$Z_K$. In particular this implies that whenever either $X^*$ or $Y$
has the approximation property every compact operator from $X$ to
$Y$ factors through $Z_K$. Johnson and Szankowski
\cite{JS} proved that there is no separable Banach space such that
every compact operator factors through it. The following result follows from
Proposition \ref{cpt} and Theorem \ref{maintheorem} and is a weaker version of Johnson's Theorem.

\begin{cor}
If  $Y$ is Banach space with a shrinking basis or is isomorphic to $C(2^\nn)$
then there exists a reflexive space $Z$ such that if $X$ is a separable Banach space with the approximation property then every compact operator from $X$ to $Y$ factors through $Z$.
\end{cor}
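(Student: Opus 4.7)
The plan is to adapt the parametrized factorization framework of Section 4 to the setting of varying domain $X$ with fixed codomain $Y$, applied to the Borel set of compact operators.

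The first step is to verify that the set
\[
\mathcal{K}_Y := \{(X', Y', (w_k)) \in \llll : Y' = Y \text{ and the associated operator } T \text{ is compact}\}
\]
is Borel in $\llll$. Since $\{w_k\}_{k \in \nn} = \{T d_k(X')\}_{k \in \nn}$ is dense in $T(B_{X'})$, the operator $T$ is compact if and only if $\{w_k\}$ is totally bounded in $Y$. Using a Borel selection $s_n(Y)$ of a dense sequence in $Y$, this is equivalent to
\[
\forall p \in \nn,\; \exists n, m_1, \ldots, m_n \in \nn,\; \forall k \in \nn,\; \min_{1 \le i \le n} \|w_k - s_{m_i}(Y)\| < \tfrac{1}{p},
\]
which is Borel. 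Since compact implies weakly compact, $\mathcal{K}_Y \subseteq \www$.

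For the case $Y \cong C(2^\nn)$, I would apply the main theorem of Section 4 directly to the analytic set $\mathcal{K}_Y \subseteq \www_{C(2^\nn)}$, producing a single reflexive Banach space $Z$ with a basis such that every operator coded in $\mathcal{K}_Y$ factors through $Z$; in particular, every compact $T : X \to Y$ (with $X$ having the approximation property, or indeed with $X$ an arbitrary separable space) factors through $Z$.

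For the case where $Y$ has a shrinking basis $(y_n)$, I would adapt the Section 4 argument: for each triple $(X', Y, (w_k)) \in \mathcal{K}_Y$ set $y_n^{(X', Y, (w_k))} := y_n$, a trivially Borel choice of basis. Theorem \ref{theoremitems}(c,d) then yields that each $Z_{(X', Y, (w_k))}$ is reflexive with basis $(z_n^{(X', Y, (w_k))})$. Lemma \ref{manyitemsU} and Lemma \ref{Boreluniversal} ensure that the induced collection of bases is analytic in $C(2^\nn)^\nn$, so Theorem \ref{ADseq1} yields a reflexive space $Z$ with a basis in which each $Z_{(X', Y, (w_k))}$ embeds as a complemented subspace; composing with the DFJP factorizations of Theorem \ref{theoremitems}(a) then gives a factorization of every compact $T : X \to Y$ through $Z$. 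The main obstacle is the extension of the Section 4 main theorem from $C(2^\nn)$ to an arbitrary $Y$ with a shrinking basis, but since $Y$ already comes equipped with its basis, the slicing-and-selection construction used in the $C(2^\nn)$ case is simply bypassed and the rest of the proof transfers with only cosmetic changes.
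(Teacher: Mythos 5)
Your proposal is correct in outline, but it takes a genuinely different route from the paper's. The paper's proof is a short reduction to the fixed-domain case: take $X=U$, Pe{\l}czy\'nski's universal space, use Proposition \ref{cpt} to see that the compact operators from $U$ to $Y$ form a Borel subset of $\llll(U,Y)$, apply Theorem \ref{maintheorem} to get one reflexive $Z$ through which every compact $T:U\to Y$ factors, and then observe that a separable $X$ with the approximation property is isomorphic to a complemented subspace of $U$, so every compact $T:X\to Y$ factors compactly through $U$ and hence through $Z$; the approximation-property hypothesis is used only for this complemented embedding into $U$. You instead stay inside the Section 4 coding $\llll$ and note that compactness of the coded operator is a Borel condition on $(w_k)$ via total boundedness against a Borel-selected dense sequence --- this is correct, and notably requires no approximation property at all. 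For $Y$ isomorphic to $C(2^\nn)$ your argument is a direct application of the Section 4 theorem and in fact yields a stronger conclusion (every compact operator from \emph{every} separable $X$ into $C(2^\nn)$ factors through a single reflexive space). For $Y$ with a shrinking basis you are invoking a version of the Section 4 theorem that the paper never states (varying domain, fixed codomain with a shrinking basis); your sketch of it is sound, since the constant basis assignment is trivially Borel, Lemmas \ref{manyitemsU} and \ref{Boreluniversal} are formulated for arbitrary Borel subsets of $\llll$, reflexivity of each $Z_{(X,Y,(w_k))}$ follows from Theorem \ref{theoremitems}(c),(d), and Theorem \ref{ADseq1} finishes --- but you should flag that you are supplying this extension rather than citing an existing statement. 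The trade-off is clear: the paper's route is two lines but genuinely consumes the approximation-property hypothesis; yours is longer but shows that hypothesis is only needed to make the statement fit the fixed-domain framework, not for the factorization itself.
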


\begin{proof}
Let  $Y$ either be a Banach space with a shrinking basis or be isomorphic to $C(2^\nn)$.
By Proposition \ref{cpt} and Theorem \ref{maintheorem}, there exists a reflexive Banach space $Z$ such that every compact operator from $U$ to $Y$ factors through $Z$.  If $X$ is a separable Banach space with the approximation property then $X$ is isomorphic to a complemented subspace of $U$.  Every compact operator from $X$ to $Y$ has a compact factorization through $U$ and hence factors through $Z$ as well.
\end{proof}

\begin{prop}
There exists a separable hereditarily indecomposable Banach space $X$, with HI dual and non-separable bidual, and a reflexive Banach space $Z$ such that every weakly compact operator on $X$ factors through $Z$.
\end{prop}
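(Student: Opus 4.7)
The plan is to combine a hereditarily indecomposable construction with scalar-plus-compact operator structure and Theorem \ref{maintheorem}. Concretely, I would take as $X$ a separable HI space with a shrinking basis such that $X^*$ is HI, $X^{**}$ is non-separable, and every bounded operator $T\in\llll(X)$ splits as $T=\lambda I + K$ with $\lambda \in \rr$ and $K$ compact. Such a space can be produced by adapting the Argyros--Haydon scalar-plus-compact construction so that the predual is itself HI (in place of $\ell_1$); non-separability of $X^{**}$ is then automatic from the non-reflexivity of $X$.

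Granted such an $X$, I would first identify $\www(X)$ with $\kkk(X)$. Since $X^{**}$ is non-separable, $X$ is non-reflexive, so the identity $I\in\llll(X)$ is not weakly compact. If $T=\lambda I + K$ lies in $\www(X)$, then $\lambda I = T - K$ lies in $\www(X)$ as the difference of two weakly compact operators, forcing $\lambda = 0$. Thus $\www(X)=\kkk(X)$. Since $X$ has a basis, $X$ has the approximation property, and so $\kkk(X)$ is the norm-closure of the finite-rank operators, a norm-separable closed subset of $\llll(X)$. Proposition \ref{cpt} then yields that $\www(X)$ is Borel, hence analytic, in the strong operator topology.

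Finally, since $X$ has a shrinking basis and $\aaa := \www(X) \subset \llll(X,X)$ is an analytic set of weakly compact operators, Theorem \ref{maintheorem} provides a reflexive Banach space $Z$ with a basis through which every weakly compact operator on $X$ factors.

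The main obstacle is the existence of the HI space $X$ with all four prescribed properties simultaneously: HI, HI dual, non-separable bidual, and the scalar-plus-compact property; once $X$ is drawn from the HI literature, the identification $\www(X)=\kkk(X)$, the Borelness of $\kkk(X)$, and the invocation of Theorem \ref{maintheorem} are essentially automatic.
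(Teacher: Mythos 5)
There is a genuine gap, and it sits exactly at the point you flag as ``the main obstacle'': the existence of the space $X$ you need. Your argument is conditional on a separable space that is simultaneously HI, has HI dual, has non-separable bidual, \emph{and} has the scalar-plus-compact property, and you propose to obtain it ``by adapting the Argyros--Haydon construction so that the predual is itself HI.'' This does not work as stated: the Argyros--Haydon space and its known relatives are separable $\mathcal{L}_\infty$-spaces whose \emph{dual} is isomorphic to $\ell_1$, which is very far from HI, and making a predual HI is irrelevant to the requirement that $X^*$ be HI. No space with your list of four properties is cited or known to follow routinely from the literature, so the existence claim --- which is the substantive content of the proposition --- is left unproved. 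The paper avoids this by taking $X$ to be the already-constructed space of \cite{AAT}: an HI space with a shrinking basis, HI dual, non-separable bidual, on which every operator is a scalar multiple of the identity plus a \emph{weakly compact} (not compact) operator.

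This weaker operator property forces a different Borelness argument than yours. Your route ($\www(X)=\kkk(X)$, then norm-separability of $\kkk(X)$ via the approximation property, then Proposition \ref{cpt}) is internally correct granted your hypothetical space, but it does not transfer to the \cite{AAT} space, where $\www(X)$ need not be norm-separable. The paper instead uses that on that space every weakly compact operator is strictly singular, so the strictly singular operators have codimension one in $\llll(X)$ and are therefore Borel by \cite{BeanlandIsrael}, whence $\www(X)$ is Borel and Theorem \ref{maintheorem} applies. To repair your proof, either supply a genuine construction of a scalar-plus-compact space with HI dual and non-separable bidual (a hard open-ended task), or switch to the \cite{AAT} space and replace your separability argument with the strict-singularity/codimension-one argument.
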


\begin{proof}
In \cite{AAT} the authors construct an HI space $X$ with a shrinking basis such that $X^*$ is HI and  $X^{**}$ is non-separable  and on which every operator is a scalar multiple of the identity plus a weakly compact operator. Once again it suffices to show that the set of weakly compact operators on $X$ is Borel. In \cite{AAT} they prove that each weakly compact operator on $X$ is strictly singular. It is shown in \cite{BeanlandIsrael} that when the strictly singular operators have codimension-one in $\llll(X)$ they are a Borel subset. It follows that the set of weakly compact operators on $X$ is a Borel subset of $\llll(X)$.  Hence, we may apply Theorem \ref{maintheorem}.
\end{proof}


\bibliographystyle{abbrv}
\bibliography{bib_source}

\def\cprime{$'$} \def\cprime{$'$} \def\cprime{$'$}
\begin{thebibliography}{10}

\bibitem{AAT}
S.~A. Argyros, A.~D. Arvanitakis, and A.~G. Tolias.
\newblock Saturated extensions, the attractors method and hereditarily {J}ames
  tree spaces.
\newblock In {\em Methods in {B}anach space theory}, volume 337 of {\em London
  Math. Soc. Lecture Note Ser.}, pages 1--90. Cambridge Univ. Press, Cambridge,
  2006.

\bibitem{ADo}
S.~A. Argyros and P.~Dodos.
\newblock Genericity and amalgamation of classes of {B}anach spaces.
\newblock {\em Adv. Math.}, 209(2):666--748, 2007.

\bibitem{ALRR}
R.~Aron, M.~Lindstr{\"o}m, W.~M. Ruess, and R.~Ryan.
\newblock Uniform factorization for compact sets of operators.
\newblock {\em Proc. Amer. Math. Soc.}, 127(4):1119--1125, 1999.

\bibitem{BeanlandIsrael}
K.~Beanland.
\newblock An ordinal indexing on the space of strictly singular operators.
\newblock {\em Israel J. Math.}, 182:47--59, 2011.

\bibitem{BeanlandDodos}
K.~Beanland and P.~Dodos.
\newblock On strictly singular operators between separable {B}anach spaces.
\newblock {\em Mathematika}, 56(2):285--304, 2010.

\bibitem{BF-ordinal}
K.~Beanland and D.~Freeman.
\newblock Ordinal ranks on weakly compact and {R}osenthal operators.
\newblock {\em Extracta Math.}, 26(2):173--194, 2011.

\bibitem{BosPHD}
B.~Bossard.
\newblock {\em Th\'eorie descriptive des ensembles en g\'eom\'etrie des espaces
  de Banach}.
\newblock PhD thesis, 1994.

\bibitem{Bos}
B.~Bossard.
\newblock A coding of separable {B}anach spaces. {A}nalytic and coanalytic
  families of {B}anach spaces.
\newblock {\em Fund. Math.}, 172(2):117--152, 2002.

\bibitem{Bo}
J.~Bourgain.
\newblock On separable {B}anach spaces, universal for all separable reflexive
  spaces.
\newblock {\em Proc. Amer. Math. Soc.}, 79(2):241--246, 1980.

\bibitem{BrookerAsplund}
P.~A.~H. Brooker.
\newblock Asplund operators and the {S}zlenk index.
\newblock {\em J.Operator Theory}, 68:405--442, 2012.

\bibitem{DFJP}
W.~J. Davis, T.~Figiel, W.~B. Johnson, and A.~Pe{\l}czy{\'n}ski.
\newblock Factoring weakly compact operators.
\newblock {\em J. Functional Analysis}, 17:311--327, 1974.

\bibitem{DodosBook}
P.~Dodos.
\newblock {\em Banach spaces and descriptive set theory: selected topics},
  volume 1993 of {\em Lecture Notes in Mathematics}.
\newblock Springer-Verlag, Berlin, 2010.

\bibitem{DodosQ}
P.~Dodos.
\newblock Quotients of banach spaces and surjectively universal spaces.
\newblock {\em Studia Math.}, 197:171--194, 2010.

\bibitem{DodosFerenczi}
P.~Dodos and V.~Ferenczi.
\newblock Some strongly bounded classes of {B}anach spaces.
\newblock {\em Fund. Math.}, 193(2):171--179, 2007.

\bibitem{Fi}
T.~Figiel.
\newblock Factorization of compact operators and applications to the
  approximation problem.
\newblock {\em Studia Math.}, 45:191--210. (errata insert), 1973.

\bibitem{FOSZ}
D.~Freeman, E.~Odell, T.~Schlumprecht, and A.~Zs{\'a}k.
\newblock Banach spaces of bounded {S}zlenk index. {II}.
\newblock {\em Fund. Math.}, 205(2):161--177, 2009.

\bibitem{GMS}
N.~Ghoussoub, B.~Maurey, and W.~Schachermayer.
\newblock Slicings, selections and their applications.
\newblock {\em Canad. J. Math.}, 44(3):483--504, 1992.

\bibitem{GoGu}
M.~Gonz{\'a}lez and J.~M. Guti{\'e}rrez.
\newblock Factoring compact sets of operators.
\newblock {\em J. Math. Anal. Appl.}, 255(2):510--518, 2001.

\bibitem{JamesSpacePNAS}
R.~C. James.
\newblock A non-reflexive {B}anach space isometric with its second conjugate
  space.
\newblock {\em Proc. Nat. Acad. Sci. U. S. A.}, 37:174--177, 1951.

\bibitem{JohnsonCompact}
W.~B. Johnson.
\newblock Factoring compact operators.
\newblock {\em Israel J. Math.}, 9:337--345, 1971.

\bibitem{JS}
W.~B. Johnson and A.~Szankowski.
\newblock Complementably universal {B}anach spaces. {II}.
\newblock {\em J. Funct. Anal.}, 257(11):3395--3408, 2009.

\bibitem{Ke}
A.~S. Kechris.
\newblock {\em Classical descriptive set theory}, volume 156 of {\em Graduate
  Texts in Mathematics}.
\newblock Springer-Verlag, New York, 1995.

\bibitem{KRN}
K.~Kuratowski and C.~Ryll-Nardzewski.
\newblock A general theorem on selectors.
\newblock {\em Bull. Acad. Polon. Sci. S\'er. Sci. Math. Astronom. Phys.},
  13:397--403, 1965.

\bibitem{Laustsen}
N.~J. Laustsen.
\newblock Maximal ideals in the algebra of operators on certain {B}anach
  spaces.
\newblock {\em Proc. Edinb. Math. Soc. (2)}, 45(3):523--546, 2002.

\bibitem{Lindenstrauss}
J.~Lindenstrauss.
\newblock On {J}ames's paper ``{S}eparable conjugate spaces''.
\newblock {\em Israel J. Math.}, 9:279--284, 1971.

\bibitem{MO}
K.~Mikkor and E.~Oja.
\newblock Uniform factorization for compact sets of weakly compact operators.
\newblock {\em Studia Math.}, 174(1):85--97, 2006.

\bibitem{OSuconvex}
E.~Odell and T.~Schlumprecht.
\newblock A universal reflexive space for the class of uniformly convex banach
  spaces.
\newblock {\em Math. Ann.}, 335(4):901--916, 2006.

\bibitem{OSZ1}
E.~Odell, T.~Schlumprecht, and A.~Zs{\'a}k.
\newblock Banach spaces of bounded {S}zlenk index.
\newblock {\em Studia Math.}, 183(1):63--97, 2007.

\bibitem{Sz}
W.~Szlenk.
\newblock The non-existence of a separable reflexive {B}anach space universal
  for all separable reflexive {B}anach spaces.
\newblock {\em Studia Math.}, 30:53--61, 1968.

\bibitem{Z}
M.~Zippin.
\newblock Banach spaces with separable duals.
\newblock {\em Trans. Amer. Math. Soc.}, 310(1):371--379, 1988.

\end{thebibliography}

\end{document}